\documentclass[11pt]{article}

\usepackage{geometry}
\usepackage{comment}
\usepackage{subcaption}
\usepackage{asymptote}
\usepackage{tikz}
\usepackage{svg}
\usepackage{tikz-3dplot}
\usepackage{forest}
\usepackage{pgfplots}
\pgfplotsset{compat=1.18}
\usetikzlibrary{calc}
\usepackage{graphicx}%
\usepackage{multirow}%
\usepackage{amsmath,amssymb,amsfonts}%
\usepackage{amsthm}%
\usepackage{mathrsfs}%
\usepackage[title]{appendix}%
\usepackage{xcolor}%
\usepackage{textcomp}%
\usepackage{manyfoot}%
\usepackage{booktabs}%
\usepackage{algorithm}%
\usepackage{algorithmicx}%
\usepackage{algpseudocode}%
\usepackage{listings}%
\usetikzlibrary{arrows.meta,positioning,automata,quotes}
\usetikzlibrary{external}
\usepgfplotslibrary{fillbetween}
\usepackage{hyperref}
\usepackage{authblk}
\usepackage[numbers]{natbib}

\newtheorem{theorem}{Theorem}%
\newtheorem{proposition}[theorem]{Proposition}%
\newtheorem{example}{Example}%
\newtheorem{corollary}{Corollary}%
\newtheorem{remark}{Remark}%
\newtheorem{definition}{Definition}%
\newtheorem{lemma}{Lemma}%
\newcommand\numberthis{\addtocounter{equation}{1}\tag{\theequation}}

\DeclareMathOperator{\lineends}{\texttt{end}}

\DeclareMathOperator{\prob}{Pr}

\DeclareMathOperator{\proj}{proj}
\DeclareMathOperator{\conv}{conv}

\DeclareMathOperator{\graph}{gr}

\DeclareMathOperator{\vertex}{vert}

\DeclareMathOperator{\R}{\mathbb{R}}
\DeclareMathOperator{\N}{\mathbb{N}}
\DeclareMathOperator{\diam}{diam}

\DeclareMathOperator{\for}{for}

\DeclareMathOperator{\power}{power}

\DeclareMathOperator{\card}{card}

\DeclareMathOperator{\corner}{corner}

\DeclareMathOperator{\AxisAligned}{\mathcal{H}}

\DeclareMathOperator{\Mul}{M}
\DeclareMathOperator{\RCG}{r}

\DeclareMathOperator{\disc}{disc}

\DeclareMathOperator{\rects}{\mathcal{B}}

\DeclareMathOperator{\AP}{\texttt{AP}}
\DeclareMathOperator{\Voxels}{\texttt{Voxels}}

\def \D {{\mathcal{D}}}

\def\mcirc{\mathop{\circ}}

\def \P{\mathcal{P}}
\def \Q{\mathcal{Q}}

\def \point{\mathcal{Q}}
\def \AxisAligned{\mathcal{H}}

\def \state{\mathcal{X}}
\def \transition{T}

\def \G{\mathcal{G}}
\def \Rlx{\mathcal{R}}

\def\limiting{*}

\providecommand{\keywords}[1]
{
  \small\textbf{\textit{Keywords---}} #1
}

\begin{document}

\title{Axis-Aligned Relaxations for Mixed-Integer Nonlinear Programming}
\title{Axis-Aligned Relaxations for Mixed-Integer Nonlinear Programming}
\author[1]{Haisheng Zhu}
\author[2]{Taotao He}
\author[1]{Mohit Tawarmalani\thanks{corresponding author: mtawarma@purdue.edu}}
\affil[1]{Mitch Daniels School of Business, Purdue University, West Lafayette, IN 47906}
\affil[2]{Antai School of Management, Shanghai Jiao-Tong University, Shanghai, China 200030}

\maketitle
\begin{abstract}
    We present a novel relaxation framework for general mixed-integer nonlinear programming (MINLP) grounded in computational geometry. Our approach constructs polyhedral relaxations by convexifying finite sets of strategically chosen points, iteratively refining the approximation to converge toward the simultaneous convex hull of factorable function graphs. The framework is underpinned by three key contributions: (i) a new class of explicit inequalities for products of functions that strictly improve upon standard factorable and composite relaxation schemes; (ii) a proof establishing that the simultaneous convex hull of multilinear functions over axis-aligned regions is fully determined by their values at corner points, thereby generalizing existing results from hypercubes to arbitrary axis-aligned domains; and (iii) the integration of computational geometry tools, specifically voxelization and QuickHull, to efficiently approximate feasible regions and function graphs. We implement this framework and evaluate it on randomly generated polynomial optimization problems and a suite of 619 instances from \texttt{MINLPLib}. Numerical results demonstrate significant improvements over state-of-the-art benchmarks: on polynomial instances, our relaxation closes an additional 20--25\% of the optimality gap relative to standard methods on half the instances. Furthermore, compared against an enhanced factorable programming baseline and Gurobi's root-node bounds, our approach yields superior dual bounds on approximately 30\% of \texttt{MINLPLib} instances, with roughly 10\% of cases exhibiting a gap reduction exceeding 50\%.
\end{abstract}

\keywords{polyhedral relaxation, voxelization, mixed-integer nonlinear programming, computational geometry, convex extensions}

\maketitle
\section{Introduction}\label{sec1}
Algorithms to solve mixed-integer nonlinear programs (MINLPs) rely predominantly on \emph{factorable programming} (FP), a framework that leverages the recursive structure of nonlinear functions to construct convex relaxations \cite{mccormick1976computability, tawarmalani2004global, misener2014antigone, mahajan2021minotaur, bestuzheva2023global}.
FP represents nonlinear functions using their expression tree where leaves represent variables or constants and internal nodes denote operations (e.g., addition, multiplication). For each internal node, an auxiliary variable is introduced, and the relationship between this variable and its operands is relaxed using properties specific to the operation; notably, McCormick envelopes are employed to relax the multiplication nodes. By systematically relaxing each operation over the known bounds of its arguments, FP transforms a nonconvex problem into a tractable convex relaxation.

Factorable programming (FP) remains the dominant framework due to its scalability: the number of auxiliary variables and constraints introduced is comparable to that of the original formulation. However, this efficiency comes at the cost of tightness. FP relaxes each operation independently over a hypercube defined by variable bounds, thereby ignoring interdependencies among variables induced by linking constraints. This decoupling yields overly conservative relaxations that admit feasible regions significantly larger than the true feasible set, often including combinations of variable values that violate the original problem constraints.

Alternative general-purpose relaxation techniques include composite relaxations~\cite{he2021new}, reformulation-linearization technique (RLT)~\cite{sherali1990hierarchy}, and sum-of-squares (SOS) hierarchies~\cite{lasserre2001explicit}. While RLT and SOS relaxations often require a proliferation of auxiliary variables, composite relaxations maintain a variable count similar to FP. Theoretically, composite relaxations are guaranteed to be tighter than FP and have been shown to be significantly tighter for polynomial optimization problems~\cite{he2024mip}. Furthermore, there is an extensive literature on convexification techniques for specific function structures. This includes results on  multilinear functions~\cite{rikun1997convex,bao2015global,del2021running}, functions with indicator variables~\cite{gunluk2010perspective,atamturk2023supermodularity}, fractional functions~\cite{he2025convexification}, supermodular functions~\cite{tawarmalani2013explicit}, quadratic forms~\cite{burer2009copositive,dey2025second}. However, such analyses have typically been confined to structured domains like hypercubes and simplices. While explicit convex envelopes for bivariate functions over convex polygons exist and have shown promise~\cite{locatelli2016polyhedral,locatelli2018convex,muller2020using}, and FP has been augmented with discretization schemes~\cite{misener2014antigone,nagarajan2019adaptive,mahajan2021minotaur}, no systematic mechanism currently exists to construct convex relaxations that simultaneously avoid excessive auxiliary variables and yields tight envelopes for factorable functions over general non-convex domains. This paper proposes such a scheme, implements it for low-dimensional functions and regions, and demonstrates its efficacy on a large collection of benchmark problems.

Our framework builds on the expression tree representation central to FP, but introduces a fundamentally distinct relaxation strategy to address this limitation. The relaxations are constructed as the convex hull of finite sets of strategically chosen points identified via two approximation steps: (i) outer-approximation of the feasible set by an axis-aligned region, and (ii) piecewise-approximation of the inner functions associated with each operator. Although our approach also decomposes functions into low-dimensional atomic expressions, it leverages computational geometry to capture interdependencies among variables and functions, thereby constructing provably tighter relaxations than factorable programming.

The cornerstone of our approach is a novel relaxation technique for multilinear functions over axis-aligned regions. To frame relaxation construction as such a problem, we approximate feasible regions using voxelization and graphs of functions using tessellations. Voxelization discretizes continuous geometric domains into a grid of voxels indicating feasibility, while tessellation approximates the function graph with a union of polygonal shapes. Standard FP can be viewed as a special case that constructs relaxations over the coarsest voxel, the bounding hypercube. In contrast, general axis-aligned regions offer opportunities for significantly tighter relaxations. Moreover,  unlike FP, which loses the structure of inner functions by replacing them with variables, our tessellation schemes preserve this structure while relaxing outer-functions. By adapting established computational geometry methods--such as discretizing lines~\cite{cohen20023d}, surfaces~\cite{sramek2002alias,stolte1997robust}, and polyhedra~\cite{gorte2016rasterization}--to construct relaxations, our framework enables higher fidelity convex relaxations without adding auxiliary variables, albeit with extra computational effort.

Our technique can also be viewed as a generalization of McCormick relaxation, which constructs the convex hull of four points on a bilinear product's graph. This geometric interpretation has inspired techniques that focus on specific points whose convex extension yields valid relaxations~\cite{crama1993concave,rikun1997convex,tawarmalani2002convex,meyer2005convex}. Our construction represents each point in an axis-aligned region as a convex combination of specific corner points using a discrete Markov chain. Using this representation, we prove that appropriately lifting these corner points into a higher-dimensional space yields the finite collection of points required to convexify multilinear compositions. 

Axis-aligned regions offer greater flexibility than hypercubes; refining the voxelization resolution allows for an arbitrarily close approximation of the feasible set. This approach extends beyond hierarchical convexification to explicitly exploit nonlinear and nonconvex structures. We take advantage of convergent voxelization schemes with computational geometry tools, such as the \texttt{Quick Hull} algorithm~\cite{barber1996quickhull} to enhance relaxation quality. Notably, we prove that our relaxation converges to the tightest possible relaxation, the convex hull of the function over the feasible domain, as the voxelization resolution increases.

We evaluate these relaxations on randomly generated polynomial optimization problems \cite{he2024mip} and $619$ instances from \texttt{MINLPLib} \cite{bussieck2003minlplib}. For polynomial optimization, our relaxation closes 20-25\% of the gap on half of the instances, clearly dominating factorable programming, Gurobi root-node~\cite{gurobi}, and composite relaxation~\cite{he2024mip}. For \texttt{MINLPLib}, we improve upon an enhanced FP baseline and Gurobi on approximately 30\% of instances, closing 50\% of the gap on 10\% of them. Although constructing axis-aligned regions and convexifying point sets incurs computational expense, the resulting relaxations are not significantly harder to solve than FP counterparts, likely due to the absence of extra auxiliary variables and the sparsity of generated inequalities.
Within \texttt{MINLPLib}, we identified six challenging unsolved instances where our relaxation achieves a bound superior to the final bounds reported by solvers after many branch-and-bound iterations.

To illustrate, consider the following nonlinear program:
\begin{alignat*}{2}
\max &\quad& e^{x_1-x_2} x_1 x_2 \\
\text{s.t.}&     & x_1, x_2 \in [0,1] \numberthis \label{example 1}
\end{alignat*}
Reformulating with auxiliary variables $t_1, t_2, t_3$:
\begin{alignat*}{2}
\max&&\quad&t_1 t_2\\
\text{s.t.}&&&t_1 = e^{t_3}\\
    &&&t_3 = x_1 - x_2\\
    &&&t_2 = x_1 x_2\\
    &&&x_1, x_2 \in [0,1] \numberthis \label{factored example 1}
\end{alignat*}
FP relaxes the bilinear term $t_1t_2$ over the variable bounds (dashed rectangle in Figure~\ref{fig1}(a)), which outer-approximates the feasible region. In contrast, our framework approximates the feasible domain by voxelization (Figure~\ref{fig1}(b)), producing a significantly tighter relaxation as depicted in Figure~\ref{fig1}(c).

\begin{figure}[htbp]
\centering

\begin{subfigure}[t]{0.33\linewidth}
\includegraphics{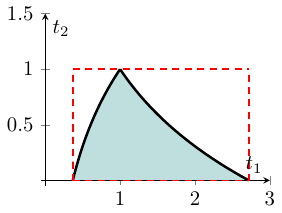}
\caption{}\label{fig1:a}
\end{subfigure}
\begin{subfigure}[t]{0.33\linewidth}
\includegraphics{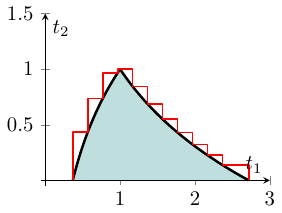}
\caption{}\label{fig1:b}
\end{subfigure}
\begin{subfigure}[t]{0.32\linewidth}
\includegraphics[page=1]{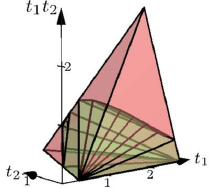}
\caption{}\label{fig1:c}
\end{subfigure}

\caption{Comparison of relaxation over rectangular bounds versus voxelization}
\label{fig1}
\end{figure}

To our knowledge, this paper presents the first general-purpose application of voxelization, tessellation, and \texttt{QuickHull} to systematically create MINLP relaxations. The framework reduces the relaxation of many MINLPs to computational geometry tasks in low-dimensional spaces, guaranteeing convergence to the tightest possible relaxation in structured settings. 
This application is enabled by:
\begin{itemize}
    \item Tools to approximate arbitrary collections of functions using piecewise multilinear functions and voxelization supported with interval-arithmetic techniques~\cite{moore2009introduction}, eliminating the need for extra auxiliary variables;
    \item A foundational convex extension property that permits simultaneous convexification of piecewise multilinear functions over axis-aligned regions, identifying point sets whose convex hull encompasses multiple function graphs simultaneously.
\end{itemize}
The framework is supported by rigorous convergence guarantees, demonstrating that the relaxation converges to the convex hull of a collection of function graphs over arbitrary domains as the resolution increases. We also develop specialized techniques for products of univariate functions that provably outperform existing methods. Our tools are implemented in a \texttt{Julia}-based framework built on \texttt{JuMP}~\cite{lubin2023jump}, enabling efficient generation of relaxations that achieve superior performance on benchmark instances.

Section~\ref{sec:2D} presents two-dimensional results. Section~\ref{sec:prod-over-box} provides new inequalities for the product of two functions defined over a rectangle, improving FP and composite relaxations~\cite{he2021new}. This motivates polyhedral relaxations using computational geometry. Section~\ref{sec:bilinear-over-nonbox} approximates the feasible domain via axis-aligned regions and proves that the convex hull of a bilinear term over such a region is finitely-generated. Section~\ref{sec:prod-over-nonbox} combines these approaches to construct polyhedral relaxations for the products of univariate functions over arbitrary regions. Section~\ref{sec:nD} addresses multilinear compositions in $n$-dimensional space. Section~\ref{sec:axis-alignedMultilinear} abstracts these problems by replacing function approximations with tessellations and constructing axis-aligned regions that incorporate these tessellations. The core theoretical result then identifies a finite set of points to convexify multilinear functions simultaneously over such regions. Section~\ref{sec:convergence} proves convergence to the convex hull as the resolution of the approximation increases. Section~\ref{sec:implementation} discusses our restricted implementation, describing two voxelization methods (Sections~\ref{sec:voxelProject}, ~\ref{sec:voxelQuadtree}). Section~\ref{sec:numerical} evaluates performance on polynomial optimization problems (Section~\ref{sec:polynomial}) and \texttt{MINLPlib} (Section~\ref{sec:minlplib}), demonstrating improvements over prevalent factorable programming relaxations. Finally, we explore hyperparameters controlling approximation resolution to analyze the trade-off between computational time and relaxation tightness.

\emph{Notation:} For a set $S$, we denote its convex hull as $\conv(S)$ and its extreme points as $\vertex(S)$. The corner points of an axis-aligned region $\AxisAligned$ are denoted by $\corner(\AxisAligned)$. The projection of a set $S$ onto the subspace of variables $x$ is written as $\proj_x(S)$. Finally, the cardinality of a finite point set $S$ is denoted as $\card(S)$. The sequence $1,\ldots,m$ is denoted as $[m]$.

\section{Tools: Piecewise Approximation and Axis-Aligned Regions}\label{sec:2D}
This section details the key tools employed to construct relaxations, with particular emphasis on the relaxation of function products, a critical step in relaxing factorable programs. Our approach proceeds in two stages: first, approximating each nonlinear function via a piecewise linear function, and second, replacing the feasible domain with its voxelization. These concepts are introduced in Sections~\ref{sec:prod-over-box} and~\ref{sec:bilinear-over-nonbox}, respectively. Together, these steps define a finite set of points whose convex hull yields a relaxation of the product over an arbitrary domain, as detailed in Section~\ref{sec:prod-over-nonbox}. We demonstrate that this approach can be tuned to balance approximation fidelity and computational efficiency, establishing it as a foundational component for relaxing complex factorable expressions.

\subsection{Multiplication of Functions over a Rectangle}\label{sec:prod-over-box}
In this subsection, we focus on relaxing  the product of two univariate functions over a rectangular domain. Specifically, we analyze the graph:
\[
\G := \bigl\{(x,\mu) \in [0,1]^2 \times \R \bigm| \mu = f_1(x_1)f_2(x_2) \bigr\},
\]
where each $f_i:[0,1] \to \R$ is an arbitrary nonlinear function. Although we restrict the domain of $x_i$ to $[0,1]$, our result generalizes to arbitrary intervals; since the functions are arbitrary, the general case can be reduced to the $[0,1]$ case via an affine transformation. 
Our relaxation relies on \textit{piecewise-linear approximation}, demonstrating that approximating each $f_i$ generates a finite set of points whose convex hull provides a valid relaxation for the graph $\G$. This approach tightens  the recent composite relaxation introduced by \citet{he2021new}. Moreover, by tuning the approximation accuracy, we can effectively balance computational efficiency with the tightness of the resulting relaxation. 

To contextualize our contribution, we first review  the composite relaxation framework proposed by \citet{he2021new}. Their method improves factorable programming relaxations by constructing a polyhedral approximation $U_i$ for each individual function $f_i(x_i)$, $i=1,2$, within a lifted space. The product is then relaxed over the Cartesian product $U_1 \times U_2$. Although the ability to choose the dimension of each $U_i$ offers flexibility to balance accuracy with computational cost as higher-dimensional approximations yield tighter relaxations, this approach suffers from two key limitations:

\begin{enumerate}
    \item The projection of the relaxation fails to converge to the convex hull of the graph of the product function as the dimension of the approximations increases.
    \item The number of inequalities required to convexify the product over $U_1\times U_2$ grows exponentially with $\min\bigl\{\text{dim}(U_1), \text{dim}(U_2)\bigr\}$, regardless of the actual number of inequalities required to describe the convex hull of $f_1(x_1) f_2(x_2)$.
\end{enumerate}
In contrast, our technique yields a tighter relaxation that converges to the convex hull of the product's graph, while eliminating redundant inequalities. We first illustrate this approach with an example, before providing a formal description of the method.

\begin{example}\label{ex:comparison-oneestimator}
Consider the monomial $x_1^2x_2^2$ over the domain $[0,2]^2$. For each $i=1,2$, we define two affine underestimators: $2x_i-1$ and $4x_i-4$. Using these, we construct a convex underestimator for the monomial and compare its strength against the composite relaxation proposed by~\citet{he2021new} and its limiting case in~\citet{he2022tractable}.

We first present our relaxation. For each $i = 1,2$, we define the polyhedral outer-approximation
\begin{equation}\label{eq:examplePi}
P_i:= \Bigl\{(x_i,t_i) \Bigm|  \max\bigl\{0, 2x_i-1, 4x_i-4\bigr\} \leq  t_i \leq 4,\ x_i \in [0,2] \Bigr\}.
\end{equation}
By computing the convex envelope of  $t_1t_2$ over $P_1 \times P_2$ and substituting $x_i^2\leq t_i\leq 4x_i$, we obtain the convex underestimator:
\[
r(x):= \max\{ 0 ,\ 4x_1^2 + 4x_2^2-16 ,\ 8x_1+8x_2-20\}.
\]

The composite relaxation of~\cite{he2021new} cannot utilize the second underestimator $4x_i - 4$, as its upper bound coincides with that of $x_i^2$ over $[0,2]$. Restricting to only the underestimator $2x_i-1$ and its corresponding upper bound $3$, Theorem~1 of~\cite{he2021new} yields the relaxation:
\[
r^1(x) := \max \left\{ \begin{aligned}
&0,\ 4x_1^2+4x_2^2-16,\ 8x_1+3x_2^2-16,\ 3x_1^2+8x_2-16\\
&6x_1+6x_2-15,\ 2x_1+2x_2+3x_1^2+3x_2^2-17
\end{aligned}
\right\},
\]
as derived in Example~2 of~\cite{he2021new}. While the composite relaxation improves as the number of underestimators increases, the limiting case--using infinitely many underestimators for $x_i^2$--is given by Example~4 of~\cite{he2022tractable}:
\[
r^\infty(x):=\max\biggl\{0, \int_{1-\frac{x_1}{2}}^{\frac{x_2}{2}}\Bigl(\frac{4\lambda^2 - 4  +4x_1-x_1^2 }{\lambda^2}\Bigr)\Bigl(\frac{4\lambda^2 - 8\lambda+4x_2-x_2^2 }{(1-\lambda)^2}\Bigr) \mathrm{d}\lambda \biggr\}. 
\]
At $x = (1.5,1.5)$, where the true function value is $\frac{81}{16} = 5.0625$, we compute: $r^1(x) =  6x_1+6x_2 -15 = 3$, $r^\infty(x) \approx 3.274653$, and $r(x) = 8x_1+8x_2-20= 4$. Thus, $r^1(x) < r^\infty(x) < r(x)$, demonstrating that our relaxation constructed using just two underestimators per variable can be tighter than the limiting composite relaxation based on infinitely many underestimators.\hfill \qed
\end{example}
In Example~\ref{ex:comparison-oneestimator}, the set $P_i$ employed to outer-approximate $x_i^2$ is a pentagon. A step in constructing $r(\cdot)$ involved computing the convex envelope of $t_1t_2$ over $P_1 \times P_2$, followed by projecting out the $t_i$ variables utilizing $x_i^2 \le t_i \le 4x_i$. Now, we derive an explicit convex hull description for $t_1t_2$ over an arbitrary pentagon. Assume that  the range of $f_i$ is $[f_i^L, f_i^U]$, and suppose that the graph of $f_i$ is outer-approximated by a pentagon defined by the following vertices:
\begin{equation}\label{eq:pentagon}
v_{i0} = 
	(0 ,f_i^L) \quad
v_{i1} = (p_{i1},	f_i^L)
\quad
v_{i2} = (p_{i2},	b_i)
\quad
v_{i3} = (1,f^U_i) \text{ and }
v_{i4} = (0,f^U_i),
\end{equation}
where $0 \leq p_{i1} < p_{i2} \leq 1$, and $f_i^L < b_i < f_i^U$. Theorem~\ref{thm:pentagon} characterizes the set: 
\[
\conv\bigl\{(x,t, t_1t_2) \bigm| (x_i,t_i) \in  P_i\; \for i = 1, 2 \bigr\}.
\]
\begin{theorem}~\label{thm:pentagon}
Let $P_i$ be the pentagon defined by the vertices in \eqref{eq:pentagon} for $i = 1, 2$ and let $P = P_1\times P_2$. Define the parameters:
\[
r_{i1} := \frac{p_{i2} - p_{i1}}{b_i - f_i^L} \qquad r_{i2} := \frac{1 - p_{i2}}{f_i^U - b_i} \quad \text{and } \quad w_i := \frac{1}{r_{i1} - r_{i2}}.
\]
The convex envelope of $f_1f_2$ over $P$ is given by $\max_{j=1,\ldots,6}\bigl\{ \bigl\langle \alpha_j, (x,t) \bigr\rangle  + c_j \bigr\}$, where $\alpha_j$ denotes the $j^{\text{th}}$ row of the matrix:
\[
\begin{bmatrix}
	0 &  0  & f_2^U & f_1^U \\
	0 &  0 & f_2^L & f_1^L \\
	(f_2^U -f_2^L)w_1 &0 & f_2^L -  r_{12}(f_2^U -f_2^L)w_1 & b_1 \\
	0 & (f_1^U - f_1^L) w_2 & b_2	 &  f_1^L -  r_{22}(f_1^U -f_1^L)w_2   \\
	(f_2^U-b_2)w_1 & (f_1^U-b_1)w_2 & b_2- r_{12}(f_2^U-b_2)w_1  &  b_1 - r_{22} (f_1^U-b_1)w_2 \\
		(b_2-f_2^L)w_1 & (b_1-f_1^L)w_2 & b_2 - r_{11}(b_2-f_2^L)w_1  &b_1 - r_{21} (b_1-f_1^L)w_2 \\
\end{bmatrix}
\]
and the constant term is $c_j = f_1^Uf_2^L - \bigl(\alpha_{j1} + \alpha_{j2} p_{21} + \alpha_{j3}f_1^U + \alpha_{j4}f_2^L\bigr)$. Similarly, the concave envelope of $f_1f_2$ over $P$ is given by $\min_{k=1,\ldots,6} \bigl\{\bigl\langle \beta_k, (x,t) \bigr\rangle + d_k \bigr\}$, where $\beta_k$ denotes the $k^{\text{th}}$ row of the matrix:
\[
\begin{bmatrix}
	0 &  0  & f_2^U & f_1^L \\
	0 &  0 & f_2^L & f_1^U \\
	(f_2^L-f_2^U)w_1 &0 & f_2^L -  r_{11}(f_2^L -f_2^U)w_1 & b_1 \\
	0 & ( f_1^L - f_1^U ) w_2 & b_2	 &  f_1^L -  r_{21}(f_1^L -f_1^U)w_2   \\
	(f_2^L-b_2)w_1 & (b_1-f_1^U)w_2 & b_2 - r_{12}(f_2^L-b_2)w_1  &  b_1 - r_{21} (b_1-f_1^U)w_2 \\
		(b_2-f_2^U)w_1 & (f_1^L-b_1)w_2 & b_2 - r_{11}(b_2-f_2^U)w_1  &  b_1 - r_{22} (f_1^L-b_1)w_2 \\
\end{bmatrix},
\]
with the constant term $d_k = f_1^Uf_2^U - \bigl(\beta_{k1} + \beta_{k2} + \beta_{k3}f_1^U + \beta_{k4}f_2^U\bigr)$.
\end{theorem}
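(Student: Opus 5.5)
The approach is to reduce the envelope computation to a finite vertex enumeration and then to a family of planar-polytope "staircase" (transportation-type) problems. The function $\phi(x,t)=t_1t_2$ is bilinear, and it is linear in $(x_1,t_1)$ for fixed $(x_2,t_2)$ and vice versa. Hence, by the standard convex-extension argument (e.g., \cite{rikun1997convex,tawarmalani2002convex}), the convex and concave envelopes of $\phi$ over the product polytope $P=P_1\times P_2$ are generated by the values of $\phi$ at $\vertex(P)=\vertex(P_1)\times\vertex(P_2)$. So the envelopes coincide with those of the $25$ points $(v_{1a},v_{2b},t_{1a}t_{2b})$, where $t_{ia}$ is the second coordinate of $v_{ia}$. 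The convex envelope is then the optimal value function
\[
\min\Bigl\{\textstyle\sum_{a,b}\lambda_{ab}\,t_{1a}t_{2b} \Bigm| \lambda\ge 0,\ \sum_b\lambda_{ab}=\pi_{1a},\ \sum_a\lambda_{ab}=\pi_{2b}\Bigr\},
\]
minimized over all representations $\pi_i$ of $(x_i,t_i)$ as convex combinations of the $v_{ia}$. By LP duality, its epigraph is described by finitely many affine minorants $\langle\alpha,(x,t)\rangle+c$ that are tight at some subset of the lifted vertices.

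To identify these minorants, I will exploit the fact that $t_{1a}t_{2b}$ depends on $a$ only through $t_{1a}\in\{f_1^L,b_1,f_1^U\}$, and likewise for $b$. The cost matrix is therefore a product, which has the Monge property. Consequently the inner transportation problem is solved by the northwest-corner rule: sort the marginals by $t$-value, increasing for one factor and decreasing for the other in the concave case. Writing $\sigma_i(\cdot)$ for the induced distribution of $t_i$ over $\{f_i^L,b_i,f_i^U\}$, the envelope becomes a function of the two "$t$-level distributions" that are achievable given $(x_i,t_i)\in P_i$.

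The next step is to parametrize the freedom in $P_i$. A point $(x_i,t_i)$ of the pentagon determines a segment of admissible weights $(\mu_{iL},\mu_{ib},\mu_{iU})$ on the three levels. Here $x_i$ enters linearly through $p_{i1},p_{i2}$, and this is where the slopes $r_{i1},r_{i2}$ of the edges $v_{i1}v_{i2}$ and $v_{i2}v_{i3}$ appear, along with $w_i=1/(r_{i1}-r_{i2})$ from solving the $2\times 2$ system for $\mu_{ib}$. Concretely, I expect $\mu_{ib}$ to be bounded above by a quantity of the form $w_i\bigl(x_i-p_{i1}-r_{i1}(t_i-f_i^L)\bigr)$, or the analogous expression with $r_{i2}$ and $f_i^U$.

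Then the dual LP is enumerated. Its dual-feasible bases correspond to pairs of breakpoints in the two staircases, and there are six of them. Two bases give the McCormick inequalities (rows 1 and 2), which are tight when no mass sits on the $b$ level. Two bases put mass on $b_i$ for one factor only (rows 3 and 4). Two mix both factors (rows 5 and 6), corresponding to the two Monge orderings in which $b_1$ is paired with $f_2^U$ or with $f_2^L$. For each row, validity is checked by verifying $\langle\alpha_j,(v_{1a},v_{2b})\rangle+c_j\le t_{1a}t_{2b}$ at all $25$ vertex pairs. Here $c_j$ is normalized by tightness at $(v_{13},v_{20})$, i.e. $x=(1,p_{21})$, $t=(f_1^U,f_2^L)$, which matches the stated form of $c_j$. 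Tightness of each facet is checked by exhibiting $4$ affinely independent vertex pairs, plus the $\mu$ coordinate, on which equality holds.

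Sufficiency, meaning that these six inequalities describe the full envelope, will follow by showing the collection is closed under the dual simplex pivots. Equivalently, every vertex of the dual polyhedron is one of the six. The concave case is obtained by applying the same argument to $-t_1t_2$, or equivalently by flipping the order of the $t_2$ levels, which swaps $r_{i1}\leftrightarrow r_{i2}$ and $f_i^L\leftrightarrow f_i^U$ in the appropriate places and yields the $\beta$ matrix with normalization at $(1,1,f_1^U,f_2^U)$.

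The main obstacle is the completeness step. Proving that exactly these six linear pieces arise, and that no further piece is needed from interactions between the pentagon's slanted edges in both factors, requires a careful case analysis of the regions where each northwest-corner basis is optimal. I would first try to carry this out via a polyhedral subdivision argument: show that the six tight sets triangulate $P$ into cells on each of which $\phi$'s envelope is affine.
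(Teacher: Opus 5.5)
\textbf{There is a genuine gap: your proposal does not prove completeness.} Checking that the six inequalities are valid at the $25$ vertex pairs is routine. What the theorem actually asserts is that the envelope equals exactly the maximum (resp.\ minimum) of these six affine functions. For this you say only that it ``will follow by showing the collection is closed under the dual simplex pivots'', which restates the claim rather than proving it. The count of six pieces is likewise asserted, not derived, and you flag this step yourself as the main obstacle.

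The obstruction comes from working on the pentagon directly. There, $(x_i,t_i)$ does not determine the distribution $(\mu_{iL},\mu_{ib},\mu_{iU})$ over the three $t$-levels. So your Monge/northwest-corner observation solves only the inner coupling problem, and you are left with an outer optimization over a segment of marginals for each factor. You have no handle on how these two outer optimizations interact.

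\textbf{The missing idea is to make the marginals unique.}

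\emph{The paper's route.} It first shows that $\conv(S^P)$ is the down-closure in $x$ (within $x\ge 0$) of $\conv(S^Q)$, where $Q_i=\conv\{v_{i1},v_{i2},v_{i3}\}$. This holds because $v_{i0},v_{i4}$ differ from $v_{i1},v_{i3}$ only by a smaller $x_i$, and $t_1t_2$ does not depend on $x$. On the triangle, barycentric coordinates are unique and $v_{i1}\le v_{i2}\le v_{i3}$ is a chain. So $t_1t_2$ is supermodular on the product of chains (submodular after reversing the second chain), and Lemma~2 of \cite{he2022tractable} gives the envelopes as interpolations over the six staircase simplices. These are the six monotone lattice paths in the $3\times3$ grid, which are exactly the supports your northwest-corner rule produces once the marginals are fixed. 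Completeness is then immediate.

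\emph{A route inside your own framework.} At fixed $(x_i,t_i)$, the admissible level distributions all have mean $t_i$ and differ only by moving mass between $\{f_i^L,f_i^U\}$ and $b_i$. They are therefore totally ordered in the convex order. Spreading a marginal can only lower the minimal (anticomonotone) coupling value and raise the maximal (comonotone) one. Hence the distribution with least $\mu_{ib}$ is optimal regardless of the other factor. That distribution is the barycentric vector, in $Q_i$, of the point reached by increasing $x_i$ at fixed $t_i$ (no change if $(x_i,t_i)\in Q_i$).

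\emph{A check needed on either route} (the paper also leaves it implicit): passing from $Q$ to $P$. On $P\setminus Q$, where $\mu_{ib}=0$, one of the rows with zero $x_i$-coefficient attains the envelope: rows $1,2,4$ for $i=1$, and rows $1,2,3$ for $i=2$. Combined with the signs of the $x$-coefficients (nonnegative in the $\alpha_j$, nonpositive in the $\beta_k$), this transfers the formula from $Q$ to $P$.

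\textbf{Minor slips.}
\begin{itemize}
\item The increasing/decreasing (anticomonotone) ordering belongs to the convex envelope, not the concave one.
\item $c_j$ is normalized at $(v_{13},v_{21})$, not $(v_{13},v_{20})$.
\item A facet in $\R^5$ needs five affinely independent tight points, not four.
\end{itemize}
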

The convex hull description comprises of $12$ non-trivial linear inequalities. Notably, the composite relaxation also yields $12$ inequalities when relaxing the product $f_1(x_1)f_2(x_2)$ using a single underestimator for each $f_i(x_i)$ (see Theorems~1 and 2 in \cite{he2021new}). However, the inequalities derived in Theorem~\ref{thm:pentagon} are strictly tighter and result in improved bounds when included in the relaxation. Building on this insight, we subsequently demonstrate that the substantial gains of composite relaxations over traditional factorable programming methods, as highlighted in \cite{he2024mip}, can be further enhanced using the techniques presented here. Theorem~\ref{thm:pentagon} is therefore of independent interest, providing inequalities that strictly dominate those in Theorems~1 and 2 of \cite{he2021new} and underpin the benefits that our relaxation provides relative to composite relaxation over rectangular domains. Further benefits for non-rectangular domains are realized in Section~\ref{sec:prod-over-nonbox}.

Instead of restricting the outer-approximation of the graph to a pentagon, we now consider a general polytope $P_i$. Even in this generalized setting, the resulting relaxation remains polyhedral.

\begin{proposition}\label{prop:disjbil}
    For $i =1,2$, let $P_i$ be a polytope that outer-approximates the graph of $f_i(\cdot)$. Then,  a polyhedral relaxation of $\G$ is given by the convex hull of the set $\point$, defined as
    \[
    \point := \bigl\{(x,t,\mu)\bigm| \mu=t_1 t_2,\ (x_i,t_i)\in \vertex(P_i) \for i=1,2 \bigr\}.
    \]
\end{proposition}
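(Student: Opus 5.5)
The argument has two parts: validity and polyhedrality. Polyhedrality is immediate. Each $P_i$ is a polytope, so $\vertex(P_i)$ is finite. Hence $\point$ is a finite set with at most $\card(\vertex(P_1))\cdot\card(\vertex(P_2))$ points, and its convex hull is a polytope. It remains to show that $\conv(\point)$ is a relaxation of $\G$, that is, $\G\subseteq\proj_{(x,\mu)}\conv(\point)$. Equivalently, the lifted set
\[
\Rlx:=\bigl\{(x,t,\mu)\bigm| \mu=t_1t_2,\ (x_i,t_i)\in P_i \for i=1,2\bigr\}
\]
is contained in $\conv(\point)$. Indeed, any $(x,\mu)\in\G$ lifts to $(x,t,\mu)\in\Rlx$ by taking $t_i=f_i(x_i)$, and $(x_i,f_i(x_i))\in P_i$ because $P_i$ outer-approximates the graph of $f_i$.

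To show $\Rlx\subseteq\conv(\point)$, take $(x,t,\mu)\in\Rlx$ and write $y_i:=(x_i,t_i)\in P_i$. By Carathéodory/Minkowski, $y_1=\sum_j\lambda_j v_{1j}$ and $y_2=\sum_k\gamma_k v_{2k}$, where the $v_{ij}\in\vertex(P_i)$ and $\lambda,\gamma$ are probability vectors. We use the product weights $\lambda_j\gamma_k$ on the points $(v_{1j},v_{2k},\tau_{1j}\tau_{2k})\in\point$, where $\tau_{ij}$ denotes the $t$-coordinate of $v_{ij}$. These weights are nonnegative and sum to one. The convex combination has $(x,t)$-part $\bigl(\sum_j\lambda_j v_{1j},\sum_k\gamma_k v_{2k}\bigr)=(y_1,y_2)$, which is correct after reordering coordinates. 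Its $\mu$-part is $\sum_{j,k}\lambda_j\gamma_k\tau_{1j}\tau_{2k}=\bigl(\sum_j\lambda_j\tau_{1j}\bigr)\bigl(\sum_k\gamma_k\tau_{2k}\bigr)=t_1t_2=\mu$. This bilinearity step is the only substantive point, and it works because the two factors depend on disjoint blocks of variables: the product is affine in $y_1$ for fixed $y_2$ and vice versa. Hence $(x,t,\mu)\in\conv(\point)$.

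Projecting onto $(x,\mu)$ then gives $\G\subseteq\proj_{(x,\mu)}\conv(\point)$, which is a polytope since the projection of a polytope is a polytope. I do not expect a real obstacle. The only care needed is the bookkeeping showing that the product measure $\lambda\otimes\gamma$ reproduces $t_1t_2$ exactly, rather than merely bounding it. I would also remark that the same computation shows $\conv(\point)=\conv(\Rlx)$, because $\point\subseteq\Rlx$.
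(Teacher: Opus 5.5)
Your proof is correct and follows the same structure as the paper's. Both lift $\G$ to the set of points $(x,t,t_1t_2)$ with $(x_i,t_i)\in P_i$ and show that this set's convex hull equals $\conv(\point)$, which is a polytope because $\vertex(P_1)\times\vertex(P_2)$ is finite. The only difference is that the paper gets this equality by citing Theorem~8 of~\cite{tawarmalani2002convex}, whereas you verify it directly with the product weights $\lambda_j\gamma_k$; this is the same computation the paper later uses to derive \eqref{eq:replaceAllCoord} in the proof of Theorem~\ref{thm:multilinear-extension}.
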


\begin{proof}
Since $P_i$ is an outer-approximation of the graph of $f_i(\cdot)$, it follows readily that a convex relaxation for  $\G$ is given as follows:
 \[
  \Rlx:= \conv\bigl\{(x,t, \mu) \bigm|   \mu = t_1t_2,\ (x_i,t_i) \in P_i \for i = 1,2   \bigr\}. 
 \]
 By Theorem 8 of~\cite{tawarmalani2002convex}, we conclude that $\Rlx = \conv(\point)$. 
 \end{proof}
 
Proposition~\ref{prop:disjbil} requires computing the convex hull of finitely many points. Since these points reside in a five-dimensional space, we can leverage low-dimensional convex hull algorithms, such as \texttt{QHull}~\cite{barber1996quickhull}, to efficiently obtain the corresponding hyperplane description. Furthermore, when the functions $f_i(\cdot)$ do not appear elsewhere in the formulation, the auxiliary variables $t_i$ can be projected out. In this case, it suffices to compute the convex hull in the $(x,\mu)$ space, namely,
\[
\conv\bigl\{(x,\mu) \bigm| \mu = t_1t_2,\ (x_i,t_i) \in \vertex(P_i) \for i =1,2 \bigr\},
\]
which corresponds to a three-dimensional convex hull computation and is therefore substantially more tractable than computing the convex hull of $\point$ in a lifted space as in Theorem~\ref{thm:pentagon}.  Beyond computational efficiency, an additional advantage of employing computational geometry tools is that they yield an irredundant facet description. Consequently, the resulting relaxation constitutes a minimal formulation, containing no redundant inequalities. For instance, in the context of Example~\ref{ex:comparison-oneestimator}, Theorem~\ref{thm:pentagon} yields six inequalities, while \texttt{QHull} produces only three, resulting in a more compact relaxation. 

By leveraging computational geometry tools, one can construct a sequence of polyhedral relaxations through the progressive refinement of the outer-approximations for each univariate function $f_i(\cdot)$. Indeed, Theorem~\ref{thm:convergence} formally establishes that this sequence converges to the convex hull of $\graph(f_1f_2)$. We illustrate this convergence by revisiting Example~\ref{ex:comparison-oneestimator}. Starting with the outer-approximation defined in~\eqref{eq:examplePi} and depicted in Figure~2(a), the resulting relaxation is shown in Figure~\ref{fig4}(b). As additional gradient inequalities are incorporated to approximate $x_i^2$, Figures~\ref{fig4}(c)–(e) demonstrate that the relaxations become progressively tighter, with the final relaxation closely approximating the convex hull.

\begin{figure}[ht]
\centering

\begin{subfigure}[t]{0.45\linewidth}
\centering
\includegraphics{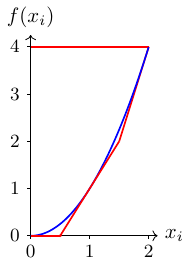}
\caption{}
\end{subfigure}
\hfill
\begin{subfigure}[t]{0.45\linewidth}
\centering
\includegraphics[page=1]{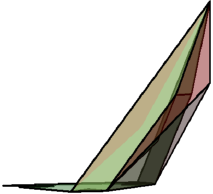}
\caption{}
\end{subfigure}

\begin{subfigure}[t]{0.3\linewidth}
\centering
\includegraphics[page=1]{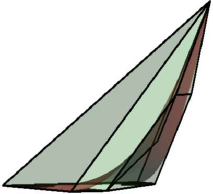}
\caption{Number of gradients: $3$}
\end{subfigure}
\hfill
\begin{subfigure}[t]{0.3\linewidth}
\centering
\includegraphics[page=1]{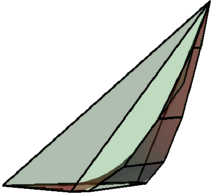}
\caption{Number of gradients: $5$}
\end{subfigure}
\hfill
\begin{subfigure}[t]{0.3\linewidth}
\centering
\includegraphics[page=1]{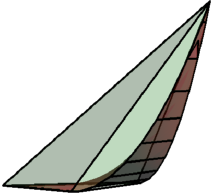}
\caption{Number of gradients: $10$}
\end{subfigure}

\caption{Approximation of $x_1^2x_2^2$ via piecewise linear relaxations}
\label{fig4}
\end{figure}

Finally, we formally demonstrate that our relaxation yields  a tighter relaxation than the composite relaxation (CR) in~\cite{he2021new}. CR applies to a composite function $\phi \mcirc f: [0,1]^d \to \R$  defined as 
\[
(\phi \mcirc f)(x) = \phi \bigl(f_1(x_1), \ldots, f_d(x_d) \bigr) \quad \for x \in [0,1]^d,
\] 
where each inner-function $f_i(\cdot)$ is univariate. CR assumes that, for each $f_i(\cdot)$, there exists a vector of underestimating functions $u_i(\cdot)$. It exploits ordering relations among the estimating functions $u_{i}(\cdot)$ and their upper bounds $a_i$ by embedding them into a polytope $U_i$, and then convexifies the outer-function $\phi(\cdot)$ over the product polytope $U := \prod_{i=1}^d U_i$. Specifically, for each $i \in [d]$, assume a pair $(u_i(x_i),a_i)$ such that for all $x_i \in [0,1]$,
\begin{equation}\label{eq:ordering-OR}
	 a_{i0} \leq a_{i1} \leq \cdots \leq a_{in_i}, \quad u_{ij}(x_i) \leq  \min \bigl\{f_i(x_i), a_{ij}\bigr\} \quad \text{ for } j = 0, \ldots, n_i.
\end{equation}
That is, each $u_{ij}(\cdot)$ underestimates $f_i(\cdot)$ and is bounded above by $a_{ij}$. The composite relaxation introduces auxiliary variables $u_i$ and $f_i$ to represent $u_i(x_i)$ and $f_i(x_i)$ respectively, and encodes the ordering constraints into the polytope $U_i$ defined as follows:
\[
U_i := \bigl\{(u_i,f_i) \bigm| f_i^L \leq f_i \leq f_i^U,\ u_{ij} \leq  a_{ij},\ u_{ij} \leq f_i \text{ for } j  \in \{0, \ldots, n_i\} \bigr\},
\]
where $f_i^L$ (resp. $f_i^U$) is a lower (resp. upper) bound of $f_i(\cdot)$.  Let $U := \prod_{i=1}^dU_i$ and define $\Phi^U := \bigl\{(f,u,\mu) \bigm| \mu = \phi(f),(f,u) \in U\bigr\}$. Given a pair $\bigl(u(x),a\bigr)$, where $u(x) = \bigl(u_1(x_1), \ldots, u_d(x_d)\bigr)$ and $a = (a_1, \ldots, a_d)$, satisfying~\eqref{eq:ordering-OR}, the composite relaxation is defined as 
\[
\mathcal{C}\mathcal{R}\bigl(u(x),a\bigr) := \bigl\{(x,f,\mu,u) \bigm| (f,u,\mu) \in \conv(\Phi^U),\ u \geq u(x) \bigr\}.
\]
In contrast, our approach avoids introducing variables for the estimating functions. Instead, it uses the underestimators $u_{ij}(\cdot)$ to construct a polyhedral outer-approximation of the graph of $f_i(\cdot)$:
\begin{equation}\label{eq:cr-oa}
P_i := \bigl\{(x_i,f_i) \bigm| f_i^L \leq f_i \leq f_i^U ,\ x_i \in [0,1],\ u_{ij}(x_i) \leq f_i \text{ for } j \in \{0, \ldots, n_i\} \bigr\}.
\end{equation}
Let $P := \prod_{i \in [d]}P_i$. Convexification of the outer-function $\phi(\cdot)$ is then performed directly over $P$ in the space of $(x,f)$ variables, whereas the composite relaxation convexifies $\phi(\cdot)$ over $U$ in the higher-dimensional space of $(f,u)$ variables. Next, we show that our relaxation is tighter than the composite relaxation. 
\begin{proposition}~\label{prop:improve_CR}
Let $\bigl(u(x),a\bigr)$ be a pair satisfying~(\ref{eq:ordering-OR}). Define the set 
\[
\Phi^P:=\bigl\{(x,f,\mu) \bigm| \mu = \phi(f), (x_i,f_i) \in P_i\; \for i \in [d]\bigr\},
\]
where $P_i$ defined as in~(\ref{eq:cr-oa}). Then, 
\[\conv\bigl(\Phi^P\bigr) \subseteq \proj_{(x,f,\mu)}\bigl( \mathcal{C}\mathcal{R}\bigl(u(x),a\bigr)\bigr).\]
\end{proposition}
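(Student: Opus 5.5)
The plan is to work directly with a convex-combination representation of a point in $\conv(\Phi^P)$ and build, component by component, a matching point of $\conv(\Phi^U)$ together with a vector $u$ satisfying $u \ge u(x)$. We do not try to show that $\proj_{(x,f,\mu)}\bigl(\mathcal{CR}(u(x),a)\bigr)$ is convex and then reduce to $\Phi^P$. That projection need not be convex unless the $u_{ij}$ are convex, so proving containment of each convex combination directly is cleaner.

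Throughout we assume, as implicit in calling $P_i$ a polyhedral outer-approximation, that each underestimator $u_{ij}(\cdot)$ is convex (affine in the intended use). Then $P_i$ in \eqref{eq:cr-oa} is convex. This is the one structural fact we need, and we flag it as the step to check first.

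Fix $(x,f,\mu) \in \conv(\Phi^P)$. By Carathéodory, write $(x,f,\mu) = \sum_k \lambda_k (x^k, f^k, \phi(f^k))$ with $\lambda \ge 0$, $\sum_k \lambda_k = 1$, and $(x_i^k, f_i^k) \in P_i$ for every $i$ and $k$. For each $k$ we must choose a lifting $u^k$ with $(u_i^k, f_i^k) \in U_i$. The natural choice $u^k_{ij} = u_{ij}(x_i^k)$ is feasible for $U_i$, but averaging it would require concavity of $u_{ij}$ to reach $u \ge u(x)$. Instead we take
\[
u^k_{ij} := \min\bigl\{a_{ij},\, f_i^k\bigr\}.
\]
Clearly $u^k_{ij} \le a_{ij}$ and $u^k_{ij} \le f^k_i$. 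Also $f_i^L \le f_i^k \le f_i^U$ because $(x_i^k, f_i^k) \in P_i$. Hence $(u^k_i, f^k_i) \in U_i$ for all $i$, and so $(f^k, u^k, \phi(f^k)) \in \Phi^U$.

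Now set $u := \sum_k \lambda_k u^k$. Then $(f, u, \mu) = \sum_k \lambda_k (f^k, u^k, \phi(f^k)) \in \conv(\Phi^U)$. It remains to verify $u \ge u(x)$. Since $t \mapsto \min\{a_{ij}, t\}$ is concave,
\[
u_{ij} = \sum_k \lambda_k \min\{a_{ij}, f_i^k\} \ \ge\ \min\Bigl\{a_{ij}, \sum_k \lambda_k f_i^k\Bigr\} = \min\{a_{ij}, f_i\}.
\]
We have $u_{ij}(x_i) \le a_{ij}$ by \eqref{eq:ordering-OR}. By convexity of $P_i$, the averaged point $(x_i, f_i) = \sum_k \lambda_k (x^k_i, f^k_i)$ lies in $P_i$, so $u_{ij}(x_i) \le f_i$. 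Therefore $u_{ij} \ge \min\{a_{ij}, f_i\} \ge u_{ij}(x_i)$. It follows that $(x, f, \mu, u) \in \mathcal{CR}(u(x),a)$, and projecting out $u$ gives the claimed inclusion.

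The main obstacle is the last inequality $u \ge u(x)$. This is exactly where the lifting must be chosen as the concave function $\min\{a_{ij}, f_i^k\}$ of the $f$-coordinate, rather than as $u_{ij}(x_i^k)$. It is also where convexity of $P_i$, i.e. of the $u_{ij}$, enters. If the paper allows nonconvex $u_{ij}$, we would replace $P_i$ by its convex hull, which is what a polytope outer-approximation amounts to, and the same argument goes through.
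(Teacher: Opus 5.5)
The proposal has a genuine gap: the Jensen step that gives $u\ge u(x)$ is reversed.

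You write $\sum_k\lambda_k\min\{a_{ij},f_i^k\}\ge\min\{a_{ij},\sum_k\lambda_k f_i^k\}$. Since $t\mapsto\min\{a_{ij},t\}$ is concave, Jensen gives the opposite inequality. For example, with $a_{ij}=3$, $f_i^1=0$, $f_i^2=4$ and equal weights, the left side is $3/2$ and the right side is $2$.

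This is not a local slip, because your chain cannot work in any form. After the lifting it never uses the points $x^k$. It uses only $f^k\in[f^L,f^U]$ and $(x_i,f_i)\in P_i$ at the average. If it were valid, it would place every $(x,f,\mu)$ with $(x_i,f_i)\in P_i$ and $(f,\mu)\in\conv\{(f',\phi(f'))\mid f^L\le f'\le f^U\}$ inside $\proj_{(x,f,\mu)}(\mathcal{C}\mathcal{R}(u(x),a))$. CR would then be no tighter than factorable programming with the same estimators. Example~\ref{ex:comparison-oneestimator} rules this out. The CR inequality $\mu\ge 6x_1+6x_2-15$ forces $\mu\ge 3$ at $x=(1.5,1.5)$. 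Yet $(x,f,\mu)=(1.5,1.5,2.25,2.25,2)$ satisfies $(x_i,f_i)\in P_i$, and $(f,\mu)$ lies on the McCormick facet $\mu=4f_1+4f_2-16$ over $[0,4]^2$.

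Your lifting $u^k_{ij}=\min\{a_{ij},f_i^k\}$ is salvageable, but the justification has to go through the $x^k$ and through convexity of each $u_{ij}$. By \eqref{eq:ordering-OR} and $(x_i^k,f_i^k)\in P_i$, you have $\min\{a_{ij},f_i^k\}\ge u_{ij}(x_i^k)$ for each $k$. Hence $u_{ij}\ge\sum_k\lambda_k u_{ij}(x_i^k)\ge u_{ij}(x_i)$, by Jensen applied to the convex function $u_{ij}$.

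Your reason for rejecting the natural lifting is also reversed. Averaging $u^k=u(x^k)$ needs convexity of $u_{ij}$, not concavity, and then it works directly. With that lifting the argument is the paper's own. The paper checks only points of $\Phi^P$, lifts by $u=u(x)$, and uses that $\proj_{(x,f,\mu)}(\mathcal{C}\mathcal{R}(u(x),a))$ is convex. That convexity holds exactly because the $u_{ij}$ are convex (affine in the intended polyhedral setting). So your worry about non-convexity of that projection disappears under the very assumption you adopt.

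Finally, your closing claim that convexity of $P_i$, or replacing $P_i$ by its hull, would suffice is not supported. The corrected step needs convexity of each individual $u_{ij}$.
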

\begin{proof}
It suffices to demonstrate that $\Phi^P \subseteq \proj_{(x,f,\mu)}\bigl( \mathcal{C}\mathcal{R}\bigl(u(x),a\bigr)\bigr)$, given that the latter set is convex. Consider an arbitrary point $(x, f, \mu) \in \Phi^P$ and let $u = u(x)$. By construction, $(f,u) \in U$ and $\mu = \phi(f)$, which implies $(f,u,\mu) \in \Phi^U \subseteq \conv(\Phi^U)$. Since $u = u(x)$ also holds, the tuple $(x,f,\mu,u)$ belongs to $\mathcal{C}\mathcal{R}\bigl(u(x),a\bigr)$. Consequently, the projection $(x,f,\mu)$ lies within $\proj_{(x,f,\mu)}\bigl( \mathcal{C}\mathcal{R}\bigl(u(x),a\bigr)\bigr)$.
 \end{proof}

\subsection{Multiplication of Variables over an Arbitrary Domain}\label{sec:bilinear-over-nonbox}

In this subsection, we construct relaxations for the product of two variables over an arbitrary bounded domain  $D \subset \R^2$. Specifically, we consider the graph:
\begin{equation}\label{eq:bilinear-over-nonbox}
\G : =\bigl\{(x,\mu) \in  \R^3 \bigm| \mu = x_1x_2,\ x \in D  \bigr\}.    
\end{equation}
A prevalent approach relies on the McCormick envelope to relax the bilinear term over a rectangular bounding box of $D$. However, such relaxations are often weak when the geometry of $D$ deviates significantly from a rectangle. Our objective is to demonstrate how \textit{voxelization} can be leveraged to generate a sequence of polyhedral relaxations that are tighter than standard McCormick relaxations and converge to the convex hull of $\G$.  

In computational geometry, voxelization is the process of converting a geometric object into a set of discrete, grid-aligned volumetric elements known as \textit{voxels}. Within our framework, the standard McCormick relaxation of $\G$ can be interpreted as a trivial voxelization of the domain $D$, wherein the entire domain is approximated by a single global voxel. We make this observation explicit as follows:
\begin{remark}\label{rmk:mcc}
    Given a bounded region $D \subset \R^2$, we approximate it with a rectangular bounding box $[x_1^L,x_1^U] \times [x_2^L,x_2^U]$. We then construct the convex hull of the bilinear term over this voxel. This procedure is equivalent to computing the convex hull of the bilinear term evaluated at the four vertices of the box:
    \[
    \conv\bigl\{(x, x_1x_2) \bigm|  x_i \in \{x_i^L,x_i^U\} \for i =1,2 \bigr\}. 
    \]
Clearly, the resulting convex hull is a valid polyhedral relaxation of $\G$, if the voxel outer-approximates $D$. \hfill \qed 
\end{remark}

By systematically excising rectangles from the initial bounding box, we obtain a refined voxelization. This approach yields tighter relaxations by computing the convex hull of the bilinear term over the resulting voxelized domain.  We illustrate this procedure with the following example.

\begin{example}\label{ex:bilinear-nonbox}
Consider a bilinear term $x_1x_2$ defined over a nonconvex domain $D$, given by:
\[
D := \bigl\{x \in \R^2 \bigm| x_2 \leq 1+\log(x_1),\ x_2 \leq 1-\log(x_1),\ x_1 \in [\mathit{e}^{-1},\mathit{e}], x_2\geq 0 \bigr\}.
\]
In FP relaxation, the domain $D$ is approximated by its bounding box $[\mathit{e}^{-1},\mathit{e}]\times [0,1]$, and the bilinear term is relaxed over this box. This single-voxel approach is depicted in the left panel of Figure~\ref{fig:bilinear-nonbox}. As shown, this relaxation is significantly looser than the convex hull of the bilinear term over $D$. Observe that $x_1x_2$ is shown as the red region in Figure~\ref{fig:bilinear-nonbox}. 

The relaxation gap can be systematically reduced by refining the underlying voxelization. For instance, by removing four rectangular regions from the initial bounding box, and computing the convex hull of the bilinear term over the resulting refined domain, we obtain a tighter relaxation. This refinement is depicted in the middle panel of Figure~\ref{fig:bilinear-nonbox}. By continuing this process, the voxelized region increasingly conforms to the geometry of $D$, and the resulting relaxation converges toward the true convex hull, as illustrated in the right panel of  Figure~\ref{fig:bilinear-nonbox}.

Although these three relaxations vary in tightness, they share a crucial structural property: they are all polyhedral. Specifically, the convex hull in each case is generated by a finite set of points. As we will establish later, these generators correspond precisely to the corner points of the voxelized regions, which are highlighted in blue in Figure~\ref{fig:bilinear-nonbox}.  \hfil \qed

\newcommand{\colw}{0.3\textwidth}  %
\newcommand{\imgw}{\linewidth}      %
\newcommand{\vsep}{0.8em}           %

\begin{figure}[t]
\centering

\begin{tabular}{@{}c c c@{}}
\begin{subfigure}[t]{\colw}\centering
\includegraphics{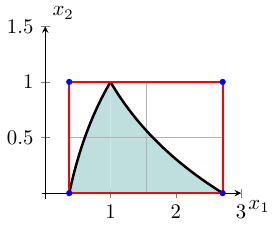}

\end{subfigure}
&
\begin{subfigure}[t]{\colw}\centering
\includegraphics{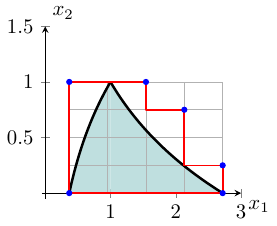}

\end{subfigure}
&
\begin{subfigure}[t]{\colw}\centering
\includegraphics{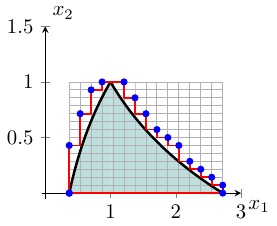}

\end{subfigure}
\\[\vsep]
\begin{subfigure}[t]{\colw}\centering
\includegraphics[page=1]{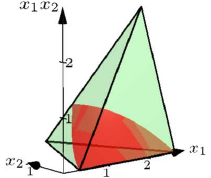}

\end{subfigure}
&
\begin{subfigure}[t]{\colw}\centering
\includegraphics[page=1]{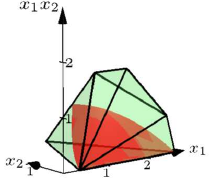}

\end{subfigure}
&
\begin{subfigure}[t]{\colw}\centering
\includegraphics[page=1]{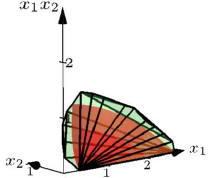}

\end{subfigure}
\end{tabular}

\caption{Refinement of the bilinear relaxation via voxelization. The top row illustrates the domain $D$ (teal) and its axis-aligned outer approximation (red boundary) with increasing levels of accuracy. The bottom row depicts the corresponding three dimensional polyhedral relaxations (green). As the voxelization is refined from left to right, the polyhedral relaxation converges toward the true convex hull of the bilinear term over $D$.}
\label{fig:bilinear-nonbox}
\end{figure}
\end{example}

To formalize the construction presented in Example~\ref{ex:bilinear-nonbox}, we introduce several definitions. First, to ensure that the resulting relaxations remain polyhedral, we require voxelizations to be axis-aligned: this property is shared by the  three voxelizations illustrated in the preceding example. 
\begin{definition}
    A  planar region $\AxisAligned$ is \textit{axis-aligned} if it can be expressed as a finite union of rectangles, each with sides parallel to the coordinate axes.  
\end{definition}
Throughout this paper, we denote by $\rects(\AxisAligned)$ the set of constituent rectangles defining $\AxisAligned$, and by $\disc(\AxisAligned)$ the set of vertices associated with the rectangles in $\rects(\AxisAligned)$. We remark that for a given axis-aligned region, $\disc (\AxisAligned)$ is not unique. However, we do not concern ourselves with this issue since any such representation will suffice for our purpose. We now formally define the specific subset of points that serve as generators for the convex hulls illustrated in Example~\ref{fig:bilinear-nonbox}. Although $\disc(\AxisAligned)$ encompasses all vertices of the underlying rectangles, the relaxation is ultimately determined by a specific subset of these points, which we refer to as \textit{corner points}.

\begin{definition}
    A point $(v_1,v_2)$ is a \textit{corner point} of a planar axis-aligned region $\AxisAligned$ if  $v_i$ is an extreme point of $\conv\{(x_1,x_2)\mid x\in \AxisAligned, x_{i'} = v_{i'}, i'\in \{1,2\}\setminus\{i\}\}$. The set of all corner points of $\AxisAligned$ is denoted by  $\corner(\AxisAligned)$. 
\end{definition}
It is important to note that every corner-point of $\AxisAligned$ must belong to the set of discretization points $\disc(\AxisAligned)$. Indeed, if a point is not a vertex of any constituent rectangles, it lies in the relative interior of some face $F$ of a rectangle. Since $F$ has dimension at least one, there exists a line segment parallel to the one of two coordinate axes that lies within $F$ and contains the point in its relative interior.  Furthermore, our definition of corner points differs from that of extreme points of polytopes, by restricting the allowable directions of such line segments. For instance, applied to the polytope $\bigl\{x\in \R^2_+\mid x_1+x_2\le 1\bigr\}$, our definition would identify all points on the line segment joining $(1,0)$ to $(0,1)$ as corner points.

We now present how the bilinear graph $\G$ can be relaxed over any bounded domain $D$. 
\begin{proposition}\label{prop:bilinear-over-nonbox}
Let $\AxisAligned \subset \R^2$ be an axis-aligned outer-approximation of the domain $D$, and define $\Rlx := \conv\bigl\{(x, x_1x_2) \bigm| x \in \AxisAligned \bigr\}$. Then, $\Rlx$ is a polyhedral relaxation of $\G$ that coincides with the convex hull of $\point$, where
    \[
\point:= \bigl\{(x,\mu) \bigm| \mu =x_1x_2,\ x \in \corner(\AxisAligned)\bigr\}.
    \]
\end{proposition}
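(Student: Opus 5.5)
The plan is to prove two inclusions. Validity is immediate: since $D \subseteq \AxisAligned$, we have $\G \subseteq \{(x,x_1x_2)\mid x\in\AxisAligned\}\subseteq \Rlx$. Also $\point \subseteq \{(x,x_1x_2) \mid x\in \AxisAligned\}$, because corner points of $\AxisAligned$ belong to $\disc(\AxisAligned)\subseteq\AxisAligned$; hence $\conv(\point)\subseteq\Rlx$. Moreover, $\disc(\AxisAligned)$ is finite and $\corner(\AxisAligned)\subseteq\disc(\AxisAligned)$, as remarked after the definition. So $\conv(\point)$ is a polytope, and polyhedrality will follow once $\Rlx=\conv(\point)$ is shown.

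For the reverse inclusion, it suffices to show that every point $(x,x_1x_2)$ with $x\in\AxisAligned$ lies in $\conv(\point)$. I would use the key fact that $\mu=x_1x_2$ is affine along any axis-parallel segment. Given $x\in\AxisAligned$, take the maximal horizontal segment through $x$ inside the rectangles of $\rects(\AxisAligned)$ that contain $x$. More precisely, take the connected component of $\AxisAligned\cap\{y \mid y_2=x_2\}$ containing $x$; it is a closed interval $[a,b]\times\{x_2\}$ because $\AxisAligned$ is a finite union of closed rectangles. The endpoints $(a,x_2)$ and $(b,x_2)$ are extreme in the $x_1$-direction, and $(x,x_1x_2)$ is a convex combination of the lifted endpoints because the product is linear in $x_1$ when $x_2$ is fixed.

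The endpoints need not be corner points yet, since they may be interior in the $x_2$-direction. So I would next move vertically from each endpoint to the ends of its maximal vertical segment, again using linearity in $x_2$. To guarantee termination, I would set up a potential or induction argument on the finite grid formed by all coordinates appearing in $\disc(\AxisAligned)$. Every endpoint produced this way has coordinates in this finite grid. I would define a Markov-chain-like process, alternating horizontal and vertical splits, on the finitely many grid points of $\AxisAligned$. Each non-corner grid point is expressed as a convex combination of neighboring grid points along a direction in which it is not extreme, with the corner points absorbing. Lifting by $\mu$ is preserved at every step.

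The main obstacle is showing that this process actually absorbs into corner points, rather than cycling among non-corner grid points forever. I would handle it by a linear-algebra or extreme-point argument instead of an explicit termination proof. Consider the polytope $K=\conv\{(y,y_1y_2)\mid y\in G\}$, where $G$ is the finite set of grid points in $\AxisAligned$. Any vertex of $K$ must be the lift of some $y\in G$. If $y$ were not a corner point, there would be an axis-parallel segment in $\AxisAligned$ through $y$ with $y$ in its relative interior. Along that segment the lift is affine, so $(y,y_1y_2)$ would be a proper convex combination of two lifted points of $\{(z,z_1z_2) \mid z\in\AxisAligned\}$. To finish the argument, I must also place those lifted points in $K$, which follows by taking the segment endpoints at adjacent grid points, where the lifted points lie in $K$. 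This contradicts extremality, so $\vertex(K)\subseteq\point$. Together with the first-step argument showing that every point of $\AxisAligned$ lifts into $K$, this gives $\Rlx\subseteq K=\conv(\point)$.
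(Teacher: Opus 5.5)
Your route differs from the paper's. You show $\vertex(K)\subseteq\point$ for the finite polytope $K=\conv\{(y,y_1y_2)\mid y\in G\}$ and then use $K=\conv(\vertex(K))$, whereas the paper builds the absorbing Markov chain of Algorithm~\ref{alg:2d-markov} and reads convex multipliers off $\transition^\limiting$. However, the step that carries all the weight is false as stated: ``if $y$ were not a corner point, there would be an axis-parallel segment in $\AxisAligned$ through $y$ with $y$ in its relative interior.'' Corner points are defined \emph{globally}. A point $y$ fails to be a corner as soon as $y_i$ lies strictly between the minimum and maximum of the entire slice $\{x_i\mid x\in\AxisAligned,\ x_{i'}=y_{i'}\}$, and that slice may be disconnected. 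Take $\AxisAligned=[0,1]^2\cup\bigl([2,3]\times[0,1]\bigr)$ and $y=(1,0)$. The horizontal slice is $[0,1]\cup[2,3]$, so $y\notin\corner(\AxisAligned)$. Yet $y$ is an endpoint of both its horizontal and its vertical segment inside $\AxisAligned$. The same happens at $(1,0)$ in the region of Figure~\ref{fig:loop} (slice $[0,1]\cup[3,4]$), where even the adjacent grid point $(2,0)$ lies outside $\AxisAligned$. As written, your argument only shows that $\Rlx$ is generated by lifts of grid points that are not relatively interior to an axis-parallel segment contained in $\AxisAligned$. That set contains $\point$ and is in general strictly larger, so the proposition is not yet established.

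The missing observation is that the segment need not lie in $\AxisAligned$ at all, because $x_1x_2$ is affine along the whole line $\{x\mid x_{i'}=y_{i'}\}$. If $y\in G$ is not a corner point, let $a<y_i<b$ be the minimum and maximum of the slice. The two points obtained from $y$ by replacing $y_i$ with $a$ or $b$ belong to $\AxisAligned$, since it is closed. They also have grid coordinates, since slice extremes are rectangle bounds, so they lie in $G$, and $(y,y_1y_2)$ is a proper convex combination of their lifts. This is exactly the global $\arg\min/\arg\max$ used in the paper's $\lineends$ procedure. With this replacement you get $\vertex(K)\subseteq\point$. You also have $\{(x,x_1x_2)\mid x\in\AxisAligned\}\subseteq K$, via one horizontal and one vertical move or simply via the vertices of a rectangle of $\rects(\AxisAligned)$ containing $x$. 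Together with $\point\subseteq K$, this yields $\Rlx=K=\conv(\point)$.

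The repaired extreme-point argument avoids the paper's absorption and acyclicity analysis entirely. It also extends directly to simultaneous hulls and products of polytopes. The Markov chain, in exchange, provides explicit convex multipliers that depend only on $\AxisAligned$, not on the function being lifted.
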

Before proceeding to the proof, we discuss its primary technical challenge. Given an arbitrary point $v = (v_1,v_2)$ in an axis-aligned region $\AxisAligned$, we must demonstrate that $(v,v_1v_2)$ can be expressed as a convex combination of points in $\point$. A natural approach is an \textit{iterative coordinate-wise convex decomposition}. Specifically, by fixing the second coordinate, one can express $(v_1,v_2,v_1v_2)$ as a convex combination of $(v^L_1,v_2,v^L_1v_2)$ and $(v^U_1,v_2,v^U_1v_2)$, where $v^L_1$ and $v^U_1$ denote the endpoints of the line segment forming the slice  $\{x\mid (x,v_2)\in \AxisAligned\}$. By iteratively alternating the fixed coordinate, one might expect to eventually reach the corner points of $\AxisAligned$. However, as illustrated in Figure~\ref{fig:loop}, this greedy coordinate-wise approach may enter a cycle. In the example shown, resolving the point $v=(2,1)$ through successive coordinate-wise decompositions can return to the starting point after a finite sequence of steps, failing to terminate at the corner points. Consequently, our proof must employ a more global argument to establish the representation. 

\begin{figure}[ht]
\centering
\includegraphics{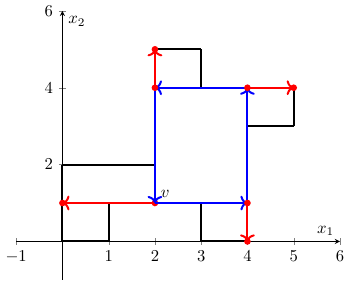}
\caption{Illustration of a cycle (indicated in blue) in the iterative coordinate-wise decomposition. Starting from point $v$, alternating decompositions along the $x_1$ and $x_2$ axes return to $v$ after four iterations, demonstrating that the greedy procedure may fail to yield a finite representation via corner points.}
\label{fig:loop}
\end{figure}

To circumvent the cyclic behavior illustrated in Figure~\ref{fig:loop}, we abandon the greedy deterministic iterative decomposition in favor of one that expresses the initial point $v$ as the expectation of the corner points under a probability distribution. Specifically, our proof constructs a Markov chain to track the evolution of the decomposition process. Within this framework, the probability distribution at each time step is interpreted as a convex decomposition of $v$.  By analyzing the limiting behavior of the Markov chain, we establish that the limiting distribution yields a valid representation of $v$ as a convex combination of corner points in $\AxisAligned$.

For completeness, we briefly review key concepts of Markov chain theory, see~\citep[Chapter 11]{grinstead2012introduction} for further details.  A Markov chain is defined by a finite \textit{state space} $\state$ and a \textit{transition matrix} $\transition$. A sequence of random variables $(X_1, X_2, X_3, \ldots)$ constitutes a Markov chain with state space $\state$ and  transition matrix $\transition$ if the \textit{Markov property} holds. Specifically, for all $x, y$, all $t \geq 2$, and any history event $H_{t-1} = \cap_{s=1}^{t-1}\{X_s = x_s\}$  such that $\prob(H_{t-1} \cap \{X_t = x\}) > 0$, we have 
\[
\prob\bigl\{X_{t+1} = y \bigm| H_{t-1} \cap \{X_t = x\}\bigr\} = \prob\{X_{t+1} =y \mid X_t = x\} = \transition(x,y). 
\]
A state $x$ is termed \textit{absorbing} if the chain cannot leave it; that is, $\transition(x,x) = 1$. A Markov chain is absorbing if it possesses at least one absorbing state and if every state can reach an absorbing state potentially over multiple steps. In such a chain, a non-absorbing state is called \textit{transient}. Let $\transition^t$ denote the $t$-step transition matrix, where the entry $\transition^t(x,y)$ represents the probability of being in state $y$ after $t$ steps, given an initial state $x$.  The transition matrix of an absorbing Markov chain is written in canonical form as:
\[
T = \begin{pmatrix}
    Q & R \\
    0 & I
    \end{pmatrix},
\]
where $Q$ describes transitions among transient states, $R$ describes transitions from transient to absorbing states, and $I$ is the identity matrix corresponding to the absorbing states. The long-term behavior of an absorbing Markov chain is characterized by the limiting transition matrix $\transition^\limiting$:
\begin{equation}\label{eq:limitingT}
    \transition^\limiting:= \lim_{t\rightarrow \infty} \transition^t =  \begin{pmatrix}
    0 & (I-Q)^{-1}R \\
    0 & I
    \end{pmatrix},
\end{equation}
where the matrix $N = (I - Q)^{-1}$ is known as the \textit{fundamental matrix}. The entries of the product $B = NR$ give the probabilities of absorption into each specific absorbing state, conditioned on starting from a transient state.

To leverage Markov chain theory, we must formally specify a state space and a transition matrix within our geometric context. To this end, we propose the iterative procedure detailed in Algorithm~\ref{alg:2d-markov}. Given an axis-aligned region $\AxisAligned$, we define the initial state space $\state$ as the union of the corner points of the constituent rectangles of $\AxisAligned$, denoted by $\disc(\AxisAligned)$. 
For each point $v$ in $\state$, we invoke the procedure $\lineends(v,\AxisAligned)$ to decompose $v$ into a pair of endpoints $(v_l,v_r)$ along one of the coordinate axes. 
Specifically, let $i$ be the smallest index in $\{1,2\}$ and let $j = \{1,2\}\setminus \{i\}$ be such that $v_i$ is not an extremum of the slice $\{x_i  \mid x \in \AxisAligned,\ x_j = v_j\}$. We define the endpoints  $v_l$ and $v_r$ as the extrema of this slice:
\begin{align*}
    v_{l} =\arg\min \{x_i  \mid x \in \AxisAligned,\ x_j = v_j  \} \quad \text{ and } \quad v_{r} = \arg\max \{x_i \mid x \in \AxisAligned,\ x_j = v_j  \}. 
\end{align*}
In this case, $v_l$ and $v_r$ are added to the state space $\state$ (if not already present), and the transition probabilities from $v$ are determined by the convex multipliers specified in line 13 of Algorithm~\ref{alg:2d-markov}. 
If no such index $i$ exists, we set $v_l = v_r = v$ and assign a transition probability of one from $v$ to itself. 
We note that a point is a corner point if and only if it is coordinate-wise extremal; that is, for a corner point $(v_1, v_2)$, $v_1$ is an extreme point of $\{x \mid (x,v_2) \in \AxisAligned\}$ and $v_2$ is an extreme point of $\{y \mid (v_1,y) \in \AxisAligned\}$.
Consequently, when $v_l = v_r = v$, we identify $v$ as a corner-point of $\AxisAligned$. 

\begin{algorithm}[ht]
\caption{A Markov chain representation for coordinate-wise convex decompositions over planar axis-aligned regions}\label{alg:2d-markov}
    \begin{algorithmic}[1]
        \Require An axis-aligned region $\AxisAligned$
\Ensure  A Markov chain with state space $\state$ and transition matrix $\transition$
        \State Initialize state space $\state \gets \disc(\AxisAligned)$
        \State Initialize unprocessed stack $\mathcal{U}\gets \state$
        \While{$\mathcal{U}$ is not empty}
        \State\label{algstep:simplePop} $v \gets \text{pop}(\mathcal{U})$
        \State\label{algstep:findEnds} $(v_l,v_r) \gets \lineends(v,\AxisAligned)$
            \If{$v_l \neq v_r$}
                
        \For{$u\in \{v_l,v_r\}$}
        \If{$u\notin \state \cup \mathcal{U}$}\label{algstep:unotstate}
        \State $\state \gets \state \cup \{u\}$
        \State\label{algstep:addU} Push $u$ onto the stack $\mathcal{U}$
        \EndIf
        \EndFor
        \State Find $\lambda \in [0,1]$ such that $v = \lambda v_l + (1- \lambda) v_r$
        \State Set $\transition(v,v_l) \gets \lambda$ and  $\transition(v,v_r) \gets 1- \lambda$
    \Else
                \State Set $\transition(v,v) \gets 1$
    \EndIf
        \EndWhile
    \end{algorithmic}
\end{algorithm}

\begin{figure}[htbp]
\centering

\begin{subfigure}[t]{0.45\linewidth}
\includegraphics{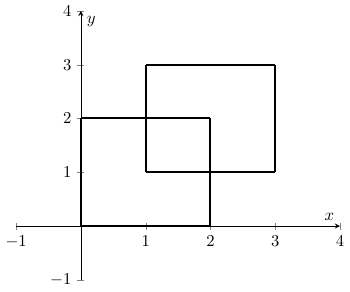}
\caption{Axis-aligned region $\AxisAligned$}\label{fig:a}
\end{subfigure}
\begin{subfigure}[t]{0.45\linewidth}
    \centering
    \includegraphics{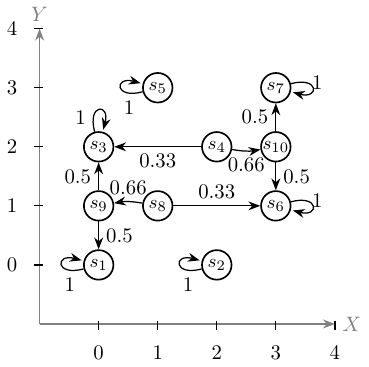}
\caption{Corresponding Markov chain}\label{fig:b}
\end{subfigure}
\caption{Illustration of the convex extension Markov chain: (a) The axis-aligned region $\AxisAligned$ and (b) the resulting stochastic transitions.}
\label{fig:markovchainillustration}
\end{figure}

In the following lemma, we prove that Algorithm~\ref{alg:2d-markov} terminates after finitely many iterations, yielding a finite Markov chain. Subsequently, we analyze its limiting behavior. The core of the proof, detailed in Appendix~\ref{app:bilinear-extension}, relies on the observation that corner points of $\AxisAligned$ constitute absorbing states within the Markov chain. Consequently, the existence of a limiting transition matrix $\transition^\limiting$ ensures that every state can ultimately be expressed as a convex combination of these corner-points. Notably, we establish this result for a general bilinear function over $\AxisAligned$, thereby enabling its application to the more general setting discussed in  Section~\ref{sec:prod-over-nonbox}, where we consider the product of two functions over an arbitrary domain. 
\begin{lemma}\label{lemma:bilinear-extension}
  Given an axis-aligned region $\AxisAligned \subseteq \R^2$, Algorithm~\ref{alg:2d-markov} terminates in finitely many iterations, yielding an absorbing Markov chain $(\state,\transition)$ for which the corner points of $\AxisAligned$ serve as the absorbing states. Moreover, for any bilinear function $\ell: \AxisAligned \to \R$, we have that for  every $x \in \state$
    \[
    \bigl(x,\ell(x)\bigr) = \sum_{v\in\corner(\AxisAligned)}\transition^\limiting(x,v) \cdot \bigl(v, \ell(v) \bigr),
    \]
    where  $\transition^\limiting$ denotes the limiting transition matrix defined as in~\eqref{eq:limitingT}. 
\end{lemma}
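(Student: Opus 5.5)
The proof has three steps: finiteness, absorption, and linear propagation through the limit.

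\emph{Step 1: termination.} Every point ever added to $\state$ has each coordinate in a fixed finite set. Let $X_1$ be the set of first coordinates appearing in $\disc(\AxisAligned)$, and $X_2$ the set of second coordinates. A slice $\{x_i \mid x\in\AxisAligned,\ x_j=v_j\}$ is a finite union of intervals whose endpoints are sides of rectangles in $\rects(\AxisAligned)$. Hence its extrema lie in $X_i$. The operation $\lineends$ keeps $v_j$ and replaces $v_i$ by such an extremum. So, by induction, every state lies in the finite grid $X_1\times X_2$.

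The check in line~\ref{algstep:unotstate} ensures each point is pushed at most once. Therefore the while loop runs at most $\card(X_1)\card(X_2)$ times, and $\state$ is finite. A small point: the extrema of the full slice may not be joined to $v$ by a segment inside $\AxisAligned$. This does not matter for convexity, because $v_i$ lies between the min and max, so $\lambda\in[0,1]$ always exists.

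\emph{Step 2: absorbing structure.} A state has $\transition(v,v)=1$ exactly when no coordinate $i$ fails to be extremal in its slice. By the characterization noted just before the algorithm, this holds exactly when $v\in\corner(\AxisAligned)$. To show the chain is absorbing, I need every transient state to reach a corner. I would order states by a potential. When coordinate $1$ is non-extremal, the move goes to a point with $x_1$ extremal in its row. That point either is a corner or is non-extremal in $x_2$; in the latter case the move goes to the endpoints of its column.

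This is the step I expect to be the main obstacle, since the cycling of Figure~\ref{fig:loop} shows deterministic paths can return. My plan is to follow the transition to the lexicographically largest extremal endpoint, $v_r$. Along the $x_1$ move, $x_1$ strictly increases to the row maximum. Along the $x_2$ move, $x_2$ strictly increases to the column maximum. Consider the point of $\state$ maximizing $x_1+x_2$ (or lexicographically maximal) among those reachable by always taking $v_r$. That point cannot be transient, since a transient point has a strictly larger successor $v_r$. So it is absorbing, and hence a corner. Strict increase also rules out cycles along $v_r$-paths. Every transient state therefore reaches a corner with positive probability, the chain is absorbing, and~\eqref{eq:limitingT} applies.

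\emph{Step 3: representation.} Let $F(x):=(x,\ell(x))$. Each transition is along a coordinate line: $v=\lambda v_l+(1-\lambda)v_r$ with one coordinate fixed. A bilinear $\ell$ is affine along such a line, so $F(v)=\sum_y \transition(v,y)F(y)$ for every transient $v$. This identity holds trivially for absorbing states.

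In matrix form, this says $\transition F=F$, where $F$ is the $\card(\state)\times 3$ matrix with rows $F(x)$. Iterating gives $\transition^t F=F$ for all $t$. Passing to the limit gives $\transition^\limiting F=F$. By~\eqref{eq:limitingT}, the rows of $\transition^\limiting$ are supported on the absorbing states, which are $\corner(\AxisAligned)$ by Step 2, and they are nonnegative with unit sum. This yields exactly the claimed identity $F(x)=\sum_{v\in\corner(\AxisAligned)}\transition^\limiting(x,v)F(v)$.

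One caveat remains. Only the corners that appear in $\state$ carry weight. Any corner of $\AxisAligned$ not in $\state$ receives coefficient zero, so the formula is unaffected.
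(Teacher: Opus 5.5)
Your proposal is correct and follows essentially the same route as the paper's proof: finiteness comes from the grid of rectangle-side coordinates, absorption comes from following the strictly increasing endpoint $v_r$ (the paper's ``majorizing neighbor'') to a corner, and the representation comes from iterating the one-step identity $\transition F = F$ and passing to the limit. The differences are cosmetic: you apply the one-step identity directly to $\ell$ instead of to $(x,x_1x_2)$ followed by an affine map, and your final caveat is unnecessary because $\corner(\AxisAligned)\subseteq\disc(\AxisAligned)\subseteq\state$.
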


With this lemma, we proceed to the proof of Proposition~\ref{prop:bilinear-over-nonbox}.

\begin{proof}[Proof of Proposition~\ref{prop:bilinear-over-nonbox}]
First, observe that $\Rlx$ is a convex relaxation of $\G$, since $D \subseteq \AxisAligned$. We now demonstrate that $\Rlx = \conv(\point)$, thereby establishing the polyhedrality of the relaxation. The inclusion $\conv(\point) \subseteq \Rlx$ follows directly from the facts that $\point \subseteq  \Rlx$ and $\Rlx$ is a convex set. To prove the reverse inclusion, it suffices to show that any point $(x,x_1x_2)$ with $x \in \AxisAligned$ can be expressed as a convex combination of points in  $\point$. Let $H$ be a rectangle in  $\rects(\AxisAligned)$ containing  $x$. By the convex extension property of bilinear term over axis-aligned rectangles~\cite{rikun1997convex,tawarmalani2002convex},  we have \[(x,x_1x_2) = \sum_{v \in \vertex(H)} \lambda_v(v,v_1v_2),\] where $\{\lambda_v\}$ are convex multipliers. Since $\vertex(H) \subseteq \disc(\AxisAligned) \subseteq \state$, we may invoke Lemma~\ref{lemma:bilinear-extension} to write
\[
(x,x_1x_2) = \sum_{v \in \vertex(H)} \lambda_v \sum_{w \in \corner(\AxisAligned)} \transition^\limiting(v,w)(w,w_1w_2). 
\]
Thus, $(x,x_1x_2)$ is a convex combination of points in $\point$.
\end{proof}

\subsection{Multiplication of Functions over an Arbitrary Domain}\label{sec:prod-over-nonbox}
In this subsection, we address both challenges discussed in Sections~\ref{sec:prod-over-box} and~\ref{sec:bilinear-over-nonbox} simultenaeously. Specifically, we consider the graph of the product of two univariate functions:
\[
\G :=\bigl\{(x,\mu) \in  \R^3 \bigm| \mu = f_1(x_1)f_2(x_2),\ x \in D  \bigr\},    
\]
where each $f_i:[x_i^L, x_i^U] \to \R$ is an arbitrary nonlinear function and $D$ is a subset of $[x^L,x^U]$. To obtain a polyhedral relaxation for $\G$, the prevalent FP approach introduces auxiliary variables $t_i$ to represent each univariate function $f_i(x_i)$, subsequently relaxing the graph of $f_i(\cdot)$ and the bilinear term $t_1t_2$ over the domain $D$ independently. In contrast, our approach simultaneously handles the nonlinearities and the nonconvexity of the domain. By integrating the outer-approximation of the univariate functions with the voxelization of the domain, we construct a sequence of polyhedral relaxations that converges to the convex hull of $\G$.

\begin{example}
    Combining the two preceding examples, consider $x_1^2x_2^2$ over the following domain:
    \begin{alignat*}{2}
        &&&x_2 \leq 1+\log(x_1)\\
        &&&x_2 \leq 1-\log(x_1)\\
        &&&x_1 \in [e^{-1},e],\quad x_2\geq 0 .
    \end{alignat*}
    We construct a piecewise linear underestimator $u_i(\cdot)$ for $x_i^2$ using its gradients, while employing linear interpolation as the overestimator $o_i(\cdot)$. The breakpoints of piecewise linear functions divide $[e^{-1},e] \times [0,1]$ into rectangular boxes. After removing boxes outside the domain, we obtain an axis-aligned outer-approximation for the domain. Over this axis-aligned region, we convexify $t_1t_2$ with $u_i(x_i) \leq t_i \leq o_i(x_i)$ for $i = 1,2$. Figure~\ref{fig6} demonstrates that the relaxation is polyhedral and converges to the convex hull of the graph. This convergence is achieved by simultaneously refining the piecewise-linear estimators of $x_i^2$ and the voxelization of the domain. \hfil \qed

\newcommand{\colw}{0.3\textwidth}  %
\newcommand{\imgw}{\linewidth}      %
\newcommand{\vsep}{0.8em}           %

\begin{figure}[t]
\centering

\begin{tabular}{@{}c c c@{}}
\begin{subfigure}[t]{\colw}\centering
\includegraphics[page=1,width = 0.9\linewidth]{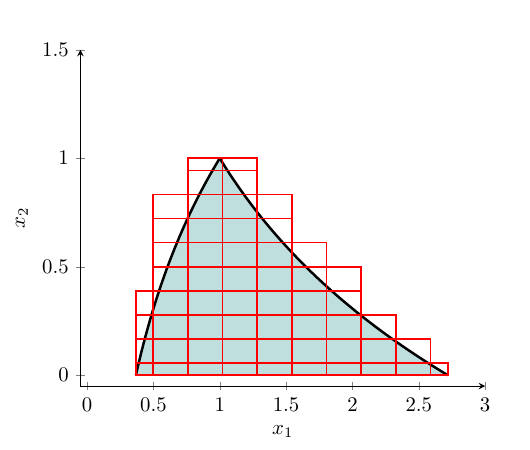}
\end{subfigure}
&
\begin{subfigure}[t]{\colw}\centering
\includegraphics[page = 1,width = 0.9\linewidth]{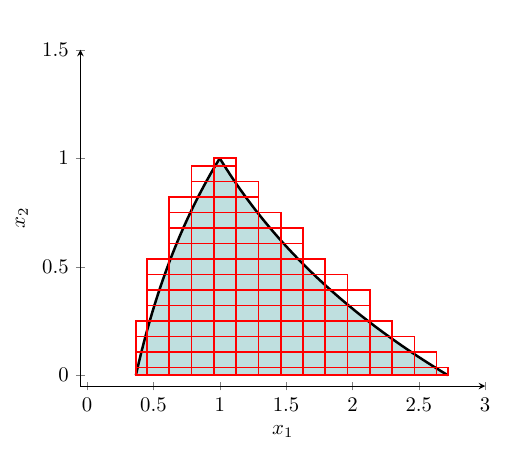}
\end{subfigure}
&
\begin{subfigure}[t]{\colw}\centering
\includegraphics[page = 1,width = 0.9\linewidth]{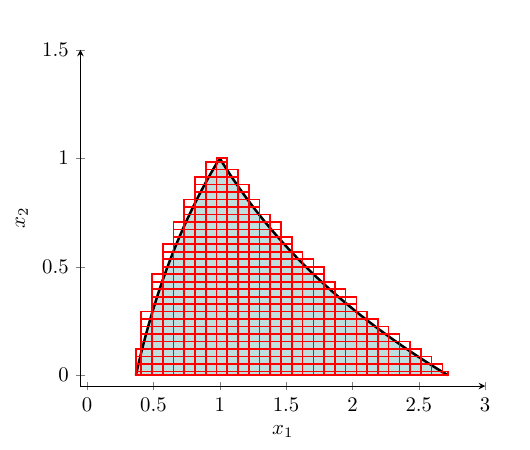}
\end{subfigure}
\\[\vsep]
\begin{subfigure}[t]{\colw}\centering
\includegraphics[page=1]{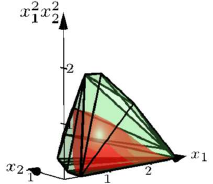}

\end{subfigure}
&
\begin{subfigure}[t]{\colw}\centering
\includegraphics[page=1]{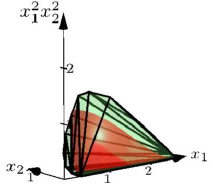}

\end{subfigure}
&
\begin{subfigure}[t]{\colw}\centering
\includegraphics[page=1]{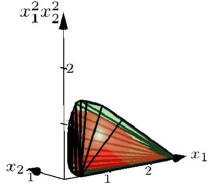}

\end{subfigure}
\end{tabular}

\caption{Multiplication of functions over a nonconvex domain}
\label{fig6}
\end{figure}
\end{example}

We now formalize this construction. Let $u_i(\cdot)$ and $o_i(\cdot)$ denote piecewise affine functions that underestimate and overestimate the univariate function $f_i(\cdot)$ over the interval $[x_i^L,x_i^U]$, respectively.\footnote{While the individual affine segments provide local bounds, we allow them to be discontinuous at breakpoints. Since we typically restrict $x_i$ to a region where $u_i(x_i)$ and $o_i(x_i)$ are both affine in the interior, with a slight abuse of notation, even at the end-points of such intervals, we evaluate $u_i(x_i)$ and $o_i(x_i)$ using the same estimator. When multiple rectangles containing $x$ are in the consideration set, we will assume that the smallest underestimator and the largest overestimator over the rectangles containing $x$ are used to evaluate $u_i(x_i)$ and $o_i(x_i)$ respectively.} 
Let $\AxisAligned$ be  an axis-aligned outer-approximation of the domain $D$. Then, we obtain the following convex relaxation:
\begin{equation}\label{eq:Rlx}
 \Rlx := \conv\bigl\{(x,t_1t_2)\bigm|  x \in \AxisAligned,\ u_i(x_i)\leq t_i\leq o_i(x_i) \, \for  i = 1,2 \bigr\}. 
\end{equation}
As in our previous constructions, we aim to represent the relaxation $\Rlx$ as the convex hull of a finite set of points. To achieve this, we integrate the breakpoints of piecewise-linear estimators with the corner-points of the axis-aligned region $\AxisAligned$. The breakpoints of the estimators divide $[x^L,x^U]$ into rectangular cells that form its cover\footnote{A collection of subsets of a set $S$ that include each point of $S$ in at least one element}. Specifically, let $\D$ be a rectangular cover of $[x^L,x^U]$ such that for every rectangle $H \in \D$, the four piecewise linear functions $u_i(\cdot)$ and $o_i(\cdot)$ ($i = 1,2$) are affine over $H$. Using this cover $\D$, we lift a finite set of points in $\AxisAligned$ to define:
\[
\point := \bigcup_{H \in \D} \Bigl\{(x,t_1t_2) \Bigm|  x \in \corner(\AxisAligned \cap H),\ t_i \in \bigl\{u_i(x_i), o_i(x_i)\bigr\} \for i = 1,2 \Bigr\}. 
\]
Given a set $S \subseteq [x^L,x^U]$ and the cover $\D$ of $[x^L,x^U]$, we construct a cover of $S$ by intersecting each element of $\D$ with $S$. This yields a valid cover because $S = S\cap [x^L,x^U] = S\cap \bigcup_{H\in \D} H = \bigcup_{H\in \D} S\cap H$, where the first equality holds by the definition of $S$, second because $\D$ is a cover of $[x^L,x^U]$, and the third by the distributive law. This motivates our next definition.

\begin{definition}[grid cover]
Let $G$ be a grid in $\mathbb{R}^n$ and let $\mathcal{O}\subseteq\mathbb{R}^n$. The grid cover of $\mathcal{O}$ induced by $G$ is
\[
\mathcal{P}=\{\, v\cap\mathcal{O}\;:\; v\text{ is a cell of }G,\ v\cap\mathcal{O}\neq\emptyset \,\}.
\]
\end{definition}

As an example, consider the shaded axis-aligned region depicted in Figure~\ref{fig:removeRectangle} obtained by removing two dotted rectangles from $[x^L,x^U]$. The grid, drawn in dashed lines, induces a grid cover of the shaded region shown as cells separated by thick red lines.
\begin{figure}[ht]
\centering
\includegraphics{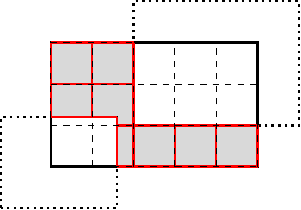}
\caption{Removing rectangles from a grid}
\label{fig:removeRectangle}
\end{figure}
Observe that the projection of $\point$ on $x$-space is a grid cover of $\AxisAligned$. We prove next that $\conv(\point)$ yields $\Rlx$. The proof exploits that any set $S$ and its cover $S_1, \ldots, S_m$ such that $S_i\subseteq S$ satisfy $\conv(S) = \conv \bigl(\cup_{j =1}^m \conv(S_j)\bigr)$. This is used to show that any point in the set $\bigl\{(x,t_1t_2) \bigm| x \in \AxisAligned \cap H,\ u_i(x_i) \leq t_i \leq o_i(x_i)\, \for \, i = 1,2 \bigr\}$ belongs to convex hull of the  union defining $\point$. 
\begin{theorem}\label{thm:functionGraphConvexHull}
Let $u_i(\cdot)$ and $o_i(\cdot)$ denote piecewise linear under- and over-estimators of $f_i(\cdot)$ on $[x^L,x^U]$, respectively, and let $\AxisAligned$ be an axis-aligned outer approximation of $D$. Then, $\Rlx$ constitutes a convex relaxation of $\G$ and coincides with the convex hull of $\point$. 
\end{theorem}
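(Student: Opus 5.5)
The plan has two parts: validity of $\Rlx$ as a relaxation, and the equality $\Rlx=\conv(\point)$. Validity is immediate. For $x\in D\subseteq\AxisAligned$, set $t_i=f_i(x_i)$; then $u_i(x_i)\le t_i\le o_i(x_i)$ holds, so $(x,f_1(x_1)f_2(x_2))$ lies in the set whose convex hull defines $\Rlx$.

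For the equality, the inclusion $\conv(\point)\subseteq\Rlx$ is direct, since every generator of $\point$ lies in that set. Here we use the footnote convention: at a corner $x$ of $\AxisAligned\cap H$, the estimators are evaluated by the affine pieces active on $H$, and these dominate the extremal choices used in $\Rlx$. For the reverse inclusion, we use the cover $\{\AxisAligned\cap H\}_{H\in\D}$ and the identity $\conv(S)=\conv(\cup_j\conv(S_j))$. It then suffices to fix $H\in\D$ and show that every $(x,t_1t_2)$ with $x\in\AxisAligned\cap H$ and $u_i(x_i)\le t_i\le o_i(x_i)$ lies in $\conv(\point_H)$. Here $\point_H$ is the piece of $\point$ indexed by $H$.

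Fix such a point. On $H$ the estimators are affine, say $u_i=a_i+b_ix_i$ and $o_i=c_i+d_ix_i$. Write $t_i=\lambda_i u_i(x_i)+(1-\lambda_i)o_i(x_i)$ with $\lambda_i\in[0,1]$. Expanding $t_1t_2$ gives
\[
\sum_{\sigma\in\{u,o\}^2}\lambda_\sigma\, \sigma_1(x_1)\sigma_2(x_2),
\]
where the $\lambda_\sigma$ are products of $\lambda_i$ and $1-\lambda_i$, so they are convex weights independent of $x$. So $(x,t_1t_2)$ is a convex combination of the four points $(x,\sigma_1(x_1)\sigma_2(x_2))$. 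It therefore suffices, for each fixed $\sigma$, to write $(x,\sigma_1(x_1)\sigma_2(x_2))$ as a convex combination of points $(v,\sigma_1(v_1)\sigma_2(v_2))$ with $v\in\corner(\AxisAligned\cap H)$. Each such point belongs to $\point_H$.

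The function $\ell_\sigma(x)=\sigma_1(x_1)\sigma_2(x_2)$ is a product of two affine univariate functions, hence bilinear on $H$: it is affine in each coordinate separately. The set $\AxisAligned\cap H$ is again axis-aligned, because intersecting each rectangle of $\rects(\AxisAligned)$ with $H$ gives a rectangle. We now argue as in the proof of Proposition~\ref{prop:bilinear-over-nonbox}. Pick a rectangle $R\in\rects(\AxisAligned\cap H)$ containing $x$. The convex extension property of bilinear functions on rectangles gives convex weights $\mu_v$ on $\vertex(R)$ such that both $x$ and $\ell_\sigma(x)$ are the corresponding combinations; the same weights work for both because $\ell_\sigma$ is multilinear. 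Then Lemma~\ref{lemma:bilinear-extension}, applied to $\AxisAligned\cap H$ and $\ell_\sigma$, expresses each $(v,\ell_\sigma(v))$ with $v\in\vertex(R)\subseteq\state$ via $\transition^\limiting$ over $\corner(\AxisAligned\cap H)$. Composing the three levels of convex combinations finishes the argument.

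The main obstacle is bookkeeping rather than a new idea. First, Lemma~\ref{lemma:bilinear-extension} must apply to a general bilinear $\ell$; it is stated that way, and this is why. Second, the weights from the Markov chain depend only on the geometry and not on $\sigma$, but this is not even needed, since we treat each $\sigma$ separately. Third, discontinuities of the estimators at cell boundaries must be handled. I would deal with the third point by consistently using the affine pieces of $H$ on all of $H$, including its boundary, and noting that the footnote convention makes these choices valid for $\Rlx$.
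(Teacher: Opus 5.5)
Your proposal is correct and follows the paper's route: reduce via the grid cover to a single cell $H$, use that products of the affine pieces on $H$ are bilinear, and invoke Lemma~\ref{lemma:bilinear-extension} on $\AxisAligned\cap H$. Your two refinements make explicit two steps the paper's proof leaves implicit. Expanding $t_1t_2$ over $\sigma\in\{u,o\}^2$ before applying the lemma handles the fact that the paper's points $(v,\ell(v))$, for its $\lambda$-mixed $\ell$, lie only in $\conv(\point_H)$ rather than in $\point_H$; passing first through a rectangle $R\ni x$ handles the fact that $\bar{x}$ need not be a state in $\state$.
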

\begin{proof}
Since each point in $\G$ is contained in $\Rlx$ and $\Rlx$ is convex, it follows that $\conv(\G) \subseteq \Rlx$. We now show that $\Rlx = \conv(\point)$.
Since $\{H \cap \AxisAligned\}_{H \in \D}$ forms a grid cover of $\AxisAligned$ induced by $\D$, it follows that $\Rlx$ can be expressed as the convex hull of $\cup_{H \in \D} \Rlx_H$, where
\[
\Rlx_H:= \conv \bigl\{(x,t_1t_2) \bigm| x \in \AxisAligned \cap H,\ u_i(x_i) \leq t_i \leq o_i(x_i)\, \for \, i = 1,2 \bigr\}. 
\]
The proof is complete if we show that for every $H \in \D$, $\Rlx_H =  \conv(\point_H)$, where 
\[
\point_H := \Bigl\{(x,t_1t_2) \Bigm| x \in \corner(\AxisAligned \cap  H), t_i \in \bigl\{u_i(x_i), o_i(x_i)\bigr\}  \, \for \, i = 1,2 \Bigr\}. 
\]
Clearly, $\conv(\point_H) \subseteq \Rlx_H$ since $\point_H \subseteq \Rlx_H$ and $\Rlx_H$ is convex. To establish the reverse inclusion, we consider a point $(\bar{x},\bar{\mu}) \in \Rlx_H$ with $\bar{\mu} = \bar{t}_1\bar{t}_2$, where, for $i\in \{1,2\}$, $\bar{t}_i \in [u_i(\bar{x}_i),o_i(\bar{x}_i)]$. For $i=1,2$, let $\lambda_i$ be such that $\bar{t}_i = \lambda_i o_i(\bar{x}_i) + (1-\lambda_i) u_i(\bar{x}_i)$. Define $\ell(x) = \prod_{i=1,2} \lambda_i o_i(x_i) + (1-\lambda_i) u_i(x_i)$, and observe that $\ell(x)$ is a bilinear expression in $x$ and $\ell(\bar{x}) = \bar{\mu}$. Since $\AxisAligned\cap H$ is axis-aligned and $\ell(x)$ is bilinear over $H$, it follows by Lemma~\ref{lemma:bilinear-extension} that there exists a subset $V$ of $\corner(\AxisAligned\cap H)$ such that $(\bar{x},\bar{\mu}) = \sum_{v\in V} T^*(x,v) \cdot (v,\ell(v))$. Since we have expressed $(\bar{x},\bar{\mu})$ as a convex combination of points in $\point_H$, it follows that $\Rlx_H\subseteq \conv(\point_H)$. 
\end{proof}

As the under- and over-estimators $u_i$ and $o_i$ converge to $f_i$ over $[x_i^L, x_i^U]$ for $i = 1, 2$, and the outer approximation $\AxisAligned$ converges to the domain $D$, $\conv(\Q_H)$ converges to $\conv(\G)$. We will prove this result in Theorem~\ref{thm:convergence}.

\section{Multilinear compositions with Linking Constraints}\label{sec:nD}
In this section, we generalize our constructions to multilinear compositions $\Mul \mcirc f$ over a domain  $D$ defined by linking constraints. Specifically, we consider a multilinear outer-function  $\Mul: (t_1, \ldots, t_k) \in  \R^{m_1}  \times \cdots \times \R^{m_k} \mapsto \R$, where $k \geq 2$ and  $m_k \geq 1$ for each $i \in [k]$. The inner-function is defined as $f(x) = \bigl(f_1(x_1), \ldots, f_k(x_k) \bigr)$, where for $i \in [k]$, $f_i: x_i \in  X_i \subseteq \R^{n_i} \mapsto \R^{m_i} $ is a nonlinear function. In addition, we assume that the domain is defined by general linking constraints, that is, 
\[
D:=\bigl\{x \bigm| x_i \in X_i \for i \in [k],\ g(x) \leq 0\bigr\},
\]
where $g(\cdot)$ is a vector of nonlinear functions. We are interested in constructing polyhedral relaxations for the graph of \textit{multilinear composition} $(\Mul \mcirc f)= \Mul\bigl(f_1(x_1) ,\ldots, f_k(x_k) \bigr)$, that is, 
\[
\graph(\Mul \mcirc f):= \bigl\{(x, \mu) \bigm| \mu = (\Mul \mcirc f)(x),\ x\in D  \bigr\}. 
\]
In Section~\ref{sec:axis-alignedMultilinear}, we extend the notion of axis-aligned region from simple planar sets to the more general domain $D$ defined above. Given an axis-aligned outer-approximation $\AxisAligned$ of $D$, we obtain a polyhedral relaxation of $\graph(\Mul \mcirc f)$ by taking the convex hull of  a finite set of points generated from the corner points of $\AxisAligned$. To achieve this, the primary technical challenge is to generalize the Markov chain based decomposition algorithm proposed in Lemma~\ref{lemma:bilinear-extension}. Then, we further generalize our results to a vector of multilinear compositions.  Finally, in Section~\ref{sec:convergence}, we present two convergence results for two special cases of this framework.

\subsection{Axis-aligned approximations and multilinear compositions}\label{sec:axis-alignedMultilinear}

As in the previous section, we first identify a region $P_i$ that serves as an outer-approximation of the graph of each inner function $f_i(x_i)$. Subsequently, we replace the functions with auxiliary variables $t_i$ and relax the multilinear composition of a multivariate function defined over the Cartesian product of these outer approximations:
\begin{align*}
    &\Mul(t_1,t_2,\cdots,t_k)\\
    &(x_i,t_i)\in P_i,\quad i\in [k]
\end{align*}
Given that computing an outer approximation of the graph of a general multivariate function is computationally challenging, our implementation focuses on functions that are products of univariate components although our constructions apply more generally.

To address the linking constraints, we introduce the concept of polytopal tessellation, which provides a discretization of the feasible region.
\begin{definition}[Polytopal Tessellation]
    A \emph{polytopal tessellation} of a region is a cover of that region with a finite number of polytopes that do not overlap in the interior. The constituent polytopes are referred to as the cells of the tessellation.
\end{definition}

To illustrate how tessellation facilitates the handling of linking constraints, consider the following example:
\begin{align*}
    &\Mul(2x_1+x_2,y) := (2x_1+x_2)y\\
    \text{s.t.}\quad &x_1+x_2\leq 1, y\leq 1\\
    &x_1,x_2,y\geq 0
\end{align*}
The function $\Mul(2x_1+x_2,y)$ is a multilinear composition of linear functions, the domain of $(x_1,x_2)$ is a simplex $S$ and that of $y \in [0,1]$. Let $S$ admit a tessellation $S = S_1\cup S_2\cup S_3$, where
\begin{align*}
    &S_1 = \{(x_1,x_2)\mid x_1+x_2\leq 1,x_1\geq 0,x_2\geq 0.5\}\\
    &S_2 = \{(x_1,x_2)\mid x_2\geq x_1,x_1\geq 0,x_2\leq 0.5\}\\
    &S_3 = \{(x_1,x_2)\mid x_1+x_2\leq 1, x_1\ge x_2, x_2\geq 0\}
\end{align*}
Similarly,  $[0,1]$ admits a tessellation $[0,1] = [0,\frac{1}{3}]\cup [\frac{1}{3},\frac{2}{3}]\cup [\frac{2}{3},1]$. The product of the tessellations is depicted in Figure~\ref{Product of tesselations}. Now, suppose that the problem contains the following constraint linking $x_2$ and $y$:
\[2x_2-3y\leq 0\]
Observe that the cell $S_1\times [0,\frac{1}{3}]$ of the product tessellation is entirely excluded by this constraint. Consequently, $S\times[0,1]\setminus S_1\times[0,\frac{1}{3}]$ constitutes a relaxation of the feasible domain (see Figure~\ref{Product of tesselations}). 
Specifically, $S\times[0,1]\setminus S_1\times\left[0,\frac{1}{3}\right]$ can be viewed as a set that generalizes axis-aligned regions defined in Section~\ref{sec:bilinear-over-nonbox}. By applying a convex extension argument similar to the one used in Section~\ref{sec:bilinear-over-nonbox}, it suffices to consider only a finite set of identifiable points to convexify $\Mul$ over this relaxation. To formally show this result, we expand the definition of axis-aligned regions and their corner points to include more general structures.

\begin{definition}
    A subset $\AxisAligned$ in $\R^{n_1} \times \cdots \times \R^{n_k}$ is \emph{axis-aligned} if it can be expressed as a finite union of products of polytopes as follows:
    \[
    \bigcup_{i=1}^n\prod_{j=1}^k P_{ij},
    \]
    where for each $j\in [k]$, the sets $P_{1j},\dots,P_{nj}$ belong to the same Euclidean space $\R^{n_j}$.
\end{definition}

We define a corner point of such an axis-aligned region as follows:
\begin{definition}\label{defn:generalCorners}
    Let $\AxisAligned$ be an axis-aligned region in $\R^{n_1} \times \cdots \times \R^{n_k}$.
    A point $(v_1,v_2,\dots,v_k)\in\R^{n_1}\times\R^{n_2}\times\cdots\times\R^{n_k}$ is a \emph{corner point} of $\AxisAligned$ if, for each $j'\in [k]$, the vector $v_{j'}$ is an extreme point of the slice \[
    \conv\bigl\{(x_1,\dots,x_k)\in\AxisAligned \bigm| x_j = v_j \text{ for all } j\in [k]\setminus\{j'\}\bigr\}.
    \]
    The set of corner points of $\AxisAligned$ will be denoted as $\corner(\AxisAligned)$.
\end{definition}
We will refer to $\bigcup_{i=1}^n \prod_{j=1}^k \vertex(P_{ij})$ as $\disc(\AxisAligned)$. Once again, $\corner(\AxisAligned) \subseteq \disc(\AxisAligned)$ because if a point $v\not\in \AxisAligned\backslash\disc(\AxisAligned)$, then there exists an $i$ such that $v\in \prod_{j=1}^k P_{ij}$ and a $j$ such that $v_j\not\in \vertex(P_{ij})$. Since the slice of $\AxisAligned$, with $x_{j'} = v_{j'}$ for all $j'\in [k]\backslash \{j\}$ must include $P_{ij}$, $v$ cannot be an extreme point of the convex hull of this slice.

\begin{figure}[htbp]
    \centering
\tdplotsetmaincoords{110}{120} %
\includegraphics{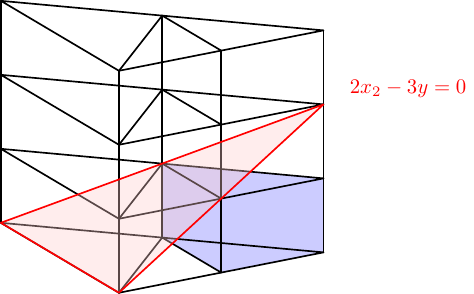}
    \caption{Product of tesselations}
    \label{Product of tesselations}
\end{figure}

We now demonstrate that, to convexify a multilinear function over an axis-aligned region, it suffices to evaluate the function solely at its corner points. We first mention that, to convexify a multilinear function over a product of polytopes $\prod_{i=1}^k P_i$, it suffices to consider only the corner points. This result follows from Theorem~1.2 in \cite{rikun1997convex}. 
\begin{lemma}[\citet{rikun1997convex}]\label{multilinear function}
    Let $\Mul$ be a multilinear function over a product of polytopes $\prod_{i=1}^kP_i$, where $P_i \subset \R^{n_i}$. Then the convex hull of the graph of $\Mul$ is the convex hull of the set
    \[ \left\{(v,\mu)\;\middle|\; \mu = \Mul(v),\ v\in\prod_{i=1}^k\vertex(P_i)\right\}\]
    where $\vertex(P_i)$ denotes the set of vertices of $P_i$.
\end{lemma}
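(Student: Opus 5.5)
The plan is to show that every point $(x,\Mul(x))$ with $x\in\prod_{i=1}^k P_i$ is a convex combination of the lifted vertex points
\[
V:=\Bigl\{(v,\Mul(v))\Bigm| v\in\prod_{i=1}^k\vertex(P_i)\Bigr\}.
\]
This gives $\conv(\graph \Mul)\subseteq\conv(V)$. The reverse inclusion is immediate, because $V$ is a subset of the graph.

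The argument is an induction on the number of blocks that are not yet at vertices, and it relies on one fact: $\Mul$ is affine in each block $t_i$ once the other blocks are fixed.

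\textbf{Single-block step.} Fix $x=(x_1,\dots,x_k)$ with $x_i\in P_i$. Since $P_i$ is a polytope, write
\[
x_1=\sum_{a}\lambda_a w_a,\qquad w_a\in\vertex(P_1),\ \lambda_a\ge 0,\ \textstyle\sum_a\lambda_a=1.
\]
The map $t_1\mapsto\Mul(t_1,x_2,\dots,x_k)$ is affine, so
\[
\bigl(x,\Mul(x)\bigr)=\sum_a\lambda_a\bigl(w_a,x_2,\dots,x_k,\Mul(w_a,x_2,\dots,x_k)\bigr).
\]
Each point on the right has its first block at a vertex of $P_1$.

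\textbf{Iteration over blocks.} Apply the same step to block $2$ in each term. The first block stays fixed at its vertex, and $\Mul$ is still affine in $t_2$. Continue through all $k$ blocks. After $k$ rounds, $(x,\Mul(x))$ is written as a convex combination of points $(v,\Mul(v))$ with $v\in\prod_i\vertex(P_i)$. The weights are products $\prod_i\lambda^{(i)}_{a_i}$, which are nonnegative and sum to one.

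Unlike the axis-aligned case (Figure~\ref{fig:loop}), no cycling can occur here. Each round fixes one block permanently, and the product structure guarantees that every slice is the full polytope $P_i$. The procedure therefore terminates after exactly $k$ rounds.

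\textbf{Finishing.} Both $\conv(V)$ and $\conv(\graph\Mul)$ are convex sets. We have shown that $\graph\Mul\subseteq\conv(V)$, and trivially $V\subseteq\graph\Mul$, so the two convex hulls are equal. Moreover $V$ is finite, so the hull is a polytope.

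The only delicate point is verifying that $\Mul$ is affine, not merely multilinear, in a vector block $t_i\in\R^{m_i}$. This holds under the paper's convention that $\Mul$ is linear in each block separately, with at most one factor drawn from any given block in each monomial. I expect this to be the main thing to check carefully. It is exactly the hypothesis under which Theorem~1.2 of~\cite{rikun1997convex} applies.
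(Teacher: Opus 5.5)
Your proof is correct and is essentially the paper's own argument. The lemma is stated by citing Rikun's Theorem~1.2, but the proof of Theorem~\ref{thm:multilinear-extension} derives it explicitly in \eqref{eq:replaceOneCoord}--\eqref{eq:replaceAllCoord}, exactly as you do: replace one block at a time by a convex combination of vertices of $P_{ij}$, using that $\Mul$ is affine in that block when the others are fixed, and arrive at the product weights $\prod_{j}\lambda(v'_j)$. You are also right that block-wise affinity, not coordinate-wise multilinearity, is the hypothesis the argument needs.
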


Due to Lemma~\ref{multilinear function}, evaluating a multilinear function over product of polytopes requires solely the vertices. We now demonstrate that, by employing convex extension arguments, this set can be further reduced to the corners of the axis-aligned region itself. To this end, Algorithm~\ref{alg:general markov} constructs a general convex extension Markov chain for $\AxisAligned = \bigcup_{i=1}^{n}\prod_{j=1}^kP_{ij}$.

\begin{algorithm}[ht]
\caption{A Markov chain representation for coordinate-wise convex decompositions over general axis-aligned regions}
\label{alg:general markov}
\begin{algorithmic}[1]
\Require An axis-aligned region $\AxisAligned = \bigcup_{i=1}^{n}\prod_{j=1}^kP_{ij}$
\Ensure  A Markov chain with state space $\state$ and transition matrix $\transition$
        \State\label{algstep:NinitializeState} Initialize state space $\state \gets \disc(\AxisAligned)$ and $\transition(u,v) = 0$ for all $u,v\in \state$
        \State Initialize unprocessed stack $\mathcal{U}\gets \state$
        \While{$\mathcal{U}$ is not empty}
        \State\label{algstep:generalPop} $v \gets \text{pop}(\mathcal{U})$
        \State For each $j\in [k]$, define the slice $S_j = \{x_j \mid x\in\AxisAligned,\, x_{j'} = v_{j'}\text{ for }j'\in [k] \setminus \{j\}\}$
        \If {$v \in \vertex\bigl(\conv(S_j)\bigr)$ for all $j$}
           \State Add the self-loop $v \to v$ with transition probability $\transition(v,v) = 1$
        \Else
           \State Let  $j^*$ be the smallest index in $\bigl\{j \bigm| v\not\in \vertex(\conv(S_j))\bigr\}$
           \State\label{algstep:NfindVerts} Express $v$ as a convex combination: $v = \sum_{u\in \vertex(S_{j^*})} \lambda_{u}u$, where $\lambda_{u} \ge 0$ and $\sum_{u}\lambda_{u} = 1$
           \For{each $u$ in $\vertex(S_{j^*})$ with $\lambda_{u} > 0$}
                \If{$u \not\in\state \cup \mathcal{U}$}\label{algstep:NuNotState}
                    \State Push $u$ to $\state$ 
                    \State Push $u$ to $\mathcal{U}$
                \EndIf
                \State\label{algstep:NaddU}  $\transition(v,u) = \lambda_{u}$
            \EndFor
        \EndIf
        \EndWhile
        \State Return $(\state,\transition)$
\end{algorithmic}
\end{algorithm}

The finiteness of the resulting Markov chain is shown in Theorem~\ref{thm:multilinear-extension} using the finiteness of $\card(\vertex(P_{ij}))$ for all $(i,j)$.

This construction generalizes the convex extension Markov chain introduced in Section~\ref{sec:bilinear-over-nonbox}.
We now demonstrate that, for each vertex of the product-of-polytopes structure defining the axis-aligned region $\AxisAligned$, the chain's limiting distribution expresses the vertex as a convex combination of the corners of $\AxisAligned$. The proof of Theorem~\ref{thm:multilinear-extension} closely parallels the results in Section~\ref{sec:bilinear-over-nonbox}, although it addresses several additional technicalities that arise due to the abstraction of function approximation and the higher-dimensional setting.

\begin{theorem}\label{thm:multilinear-extension}
  Given an axis-aligned region $\AxisAligned$, Algorithm~\ref{alg:general markov} terminates in finitely many iterations, yielding an absorbing Markov chain $(\state,\transition)$ for which the corner points $\corner(\AxisAligned)$ serve as the absorbing states. Moreover, for any multilinear function $\Mul: \AxisAligned \to \R$, we have that for every $x \in \state$:
    \begin{equation}\label{eq:multConvCombination}
    \bigl(x,\Mul(x)\bigr) = \sum_{v\in\corner(\AxisAligned)}\transition^\limiting(x,v) \cdot \bigl(v, \Mul(v) \bigr).
    \end{equation}
    Consider $x\in P_i = \prod_{j=1}^k P_{ij} \subseteq \AxisAligned$ such that $x_j = \sum_{v'_j\in \vertex(P_{ij})}\lambda(v'_j)\cdot v'_j$ is a representation of $x_j$ as a convex combination of vertices in $P_j$. Then,
    \begin{equation}\label{eq:arbitraryXMult}
         \bigl(x,\Mul(x)\bigr) 
          = \sum_{v'\in \vertex(P_i)}\sum_{v\in \corner(\AxisAligned)}  \left(\prod_{j=1}^k \lambda(v'_j)\right)\transition^\limiting(v',v) \cdot \bigl(v,\Mul(v)\bigr).
    \end{equation}
\end{theorem}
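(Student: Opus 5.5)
The plan has four parts: finiteness of the state space, the absorbing structure of the chain, a linear invariant along each transition, and passage to the limit.

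\emph{Finiteness.} I would show that every state generated by Algorithm~\ref{alg:general markov} lies in the finite set
\[
\mathcal{F} := \prod_{j=1}^k \Bigl(\bigcup_{i=1}^n \vertex(P_{ij})\Bigr).
\]
Initially $\state = \disc(\AxisAligned) \subseteq \mathcal{F}$. Suppose a state $v \in \mathcal{F}$ is popped and decomposed along block $j^*$. Every new state $u$ agrees with $v$ in all blocks $j \neq j^*$. Its block $u_{j^*}$ is a vertex of $\conv(S_{j^*})$, where $S_{j^*} = \bigcup_{i:\, v_{j'} \in P_{ij'}\ \forall j' \neq j^*} P_{ij^*}$ is a finite union of polytopes. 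Every vertex of the convex hull of such a union is a vertex of one of the $P_{ij^*}$, so $u \in \mathcal{F}$. (I read $\vertex(S_{j^*})$ in line~\ref{algstep:NfindVerts} as $\vertex(\conv(S_{j^*}))$.) Each state is pushed at most once, so the loop terminates after at most $\card(\mathcal{F})$ iterations, and each row of $\transition$ is a probability vector.

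\emph{Absorbing structure.} The self-loop states are exactly the points whose blocks are all extreme in their slices, that is, the corner points lying in $\state$. I would check that $\corner(\AxisAligned) \subseteq \disc(\AxisAligned) \subseteq \state$, which the paper has already argued. It remains to show that every transient state reaches a corner point. This is the main obstacle, because Figure~\ref{fig:loop} shows that cycles can occur.

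My first approach is a strictly convex potential. Take $\phi(x) = \|x\|^2$. When $v$ is split along block $j^*$ into points $u$ that differ from $v$ only in block $j^*$, the identity $v_{j^*} = \sum_u \lambda_u u_{j^*}$ with at least two distinct $u_{j^*}$ (since $v_{j^*}$ is not extreme) gives
\[
\sum_u \lambda_u \phi(u) > \phi(v).
\]
Define $h(x) := \mathbb{E}_x\bigl[\phi(X_t)\bigr]$. It is nondecreasing in $t$ and bounded on the finite set $\mathcal{F}$, and each visit to a transient state raises it by at least a fixed $\delta > 0$. Suppose some transient state could not reach an absorbing state. Its closed communicating class $C$ would then be recurrent, the chain would revisit states of $C$ infinitely often, and $\mathbb{E}[\phi]$ would grow without bound, a contradiction. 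Hence the chain is absorbing and $\transition^\limiting$ in~\eqref{eq:limitingT} exists, with support on $\corner(\AxisAligned)$.

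\emph{Invariant and limit.} A multilinear $\Mul$ is affine in block $j^*$ when the other blocks are fixed. Since each transition changes only block $j^*$, both $x$ and $\Mul(x)$ are preserved in expectation:
\[
\bigl(v,\Mul(v)\bigr) = \sum_u \transition(v,u)\,\bigl(u,\Mul(u)\bigr).
\]
So $g(x) := (x, \Mul(x))$ satisfies $g = \transition g$ on $\state$. Iterating gives $g = \transition^t g$ for all $t$, and letting $t \to \infty$ gives~\eqref{eq:multConvCombination}.

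For~\eqref{eq:arbitraryXMult}, fix $x \in \prod_j P_{ij}$. I would use Lemma~\ref{multilinear function} in its explicit product-weight form: by multilinearity, expanding each $x_j = \sum \lambda(v'_j) v'_j$ gives
\[
\bigl(x,\Mul(x)\bigr) = \sum_{v' \in \vertex(P_i)} \Bigl(\prod_j \lambda(v'_j)\Bigr) \bigl(v',\Mul(v')\bigr).
\]
Each such $v'$ belongs to $\disc(\AxisAligned) \subseteq \state$, so substituting~\eqref{eq:multConvCombination} for every $v'$ yields the claim.
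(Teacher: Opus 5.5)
Your proposal is correct. Apart from one step it follows the paper's proof:
\begin{itemize}
\item termination uses the same finite grid $\prod_j\bigl(\bigcup_i \vertex(P_{ij})\bigr)$;
\item the identity uses the same one-step relation $(v,\Mul(v))=\sum_u \transition(v,u)\,(u,\Mul(u))$, iterated and passed to the limit;
\item the formula at arbitrary $x\in P_i$ uses the same product-weight expansion over $\vertex(P_i)$.
\end{itemize}

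The difference is in showing that every non-corner state reaches a corner.

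\emph{The paper's route} uses lexicographic order. Take the first scalar coordinate where some successor of a non-corner state $s$ differs from $s$. All successors agree with $s$ before that coordinate and average to $s$ at it. So some successor is strictly larger there, hence lexicographically larger than $s$. Following such successors gives a strictly increasing path in a finite set, which can only stop at a self-loop, i.e.\ a corner.

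\emph{Your route} uses the strictly convex potential $\phi(x)=\|x\|^2$. At every non-corner $v$, its $\transition(v,\cdot)$-average exceeds $\phi(v)$ by $\sum_u \transition(v,u)\|u-v\|^2\ge\delta>0$. You combine this with boundedness of $\mathbb{E}_x[\phi(X_t)]$ on the finite state space.

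Both arguments work. Yours exposes the underlying structure: the chain is a martingale in $x$, so any strictly convex function is a strict submartingale off the corners. It also gives a quantitative bound on the expected time to absorption,
\[
\sum_t \Pr_x\bigl(X_t\notin\corner(\AxisAligned)\bigr)\le(\max\phi-\min\phi)/\delta .
\]
The paper's argument is purely combinatorial and exhibits an explicit path.

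Your argument can be shortened to exactly the paper's format. Since the average of $\phi$ over successors exceeds $\phi(v)$, some successor has strictly larger $\phi$. A strictly $\phi$-increasing path in a finite set must end at a corner. This also removes the detour through closed communicating classes. As written, that step is slightly off: a state that cannot reach a corner need not itself lie in a closed class, it merely reaches one. The direct version is simpler: if no corner is reachable from $x$, then $\mathbb{E}_x[\phi(X_t)]\ge\phi(x)+t\delta$ for all $t$, which already contradicts boundedness.
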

\begin{proof}
Since $\AxisAligned$ is a finite union of products of polytopes, we construct vectors consisting of extreme points for each coordinate space $\R^{n_j}$ ($j\in [k]$). %
The $j^{\text{th}}$ vector is defined as the lexicographically sorted list of extreme points in $\bigcup_{i=1}^n \vertex(P_{i j})$, where $n$ is the number of products of polytopes in $\AxisAligned$. We will show that the state space $\state$ of the Markov chain is a subset of the grid formed by the Cartesian product of these vectors. Since the number of polytopes is finite, this grid--and thus $\state$--is finite. Consequently Algorithm~\ref{alg:general markov} terminates finitely, since Step~\ref{algstep:NuNotState} only pushes a point $v'$ to the stack if it is not already in $\state \cup \mathcal{U}$. 

It remains to show that every point generated at Step~\ref{algstep:NfindVerts} lies on this grid. We prove by induction that if the stack $\mathcal{U}$ contains only grid points, then any point added at Step~\ref{algstep:NaddU} is also a grid point. The base case holds because the initial vertices of the constituent polytopes are grid points by construction. Assume that the point $v$ popped from $\mathcal{U}$ at Step~\ref{algstep:generalPop} is a grid point. Then, the vectors $v^i$ generated at Step~\ref{algstep:NfindVerts} are extreme points of the following slice obtained by fixing all coordinates except those in $\R^{n_j}$:
\[S_j = \conv\left\{x_j\,\middle|\, x\in \bigcup_{i=1}^n \prod_{j=1}^k P_{ij},\ x_{j'} = v_{j'} \forall j\ne j'\right\}.\]
Observe that $S_j$ is the convex hull of a union of polytopes, where $P_{ij}$ contributes to this union if and only if $v_{j'}\in P_{ij'}$ for all $j'\ne j$. The extreme points of $S_j$ belong to $\vertex(P_{ij})$ for some contributing $P_{ij}$. This is because any extreme point of a union of sets must be an extreme point of one of the constituent sets; it cannot be expressed as a convex combination of other points in $S_j$ and, thus, also of other points in any constituent set. Since $\vertex(P_{ij})$ are included in the $j^{\text{th}}$ vector, the vertices generated at Step~\ref{algstep:NfindVerts} are included in the constructed grid vectors. Thus, all generated points remain within the finite grid, ensuring that Algorithm~\ref{alg:general markov} terminates with a finite discrete Markov chain.

We now show that $(\state,\transition)$ is an absorbing Markov chain such that the limiting distribution $T^*(s,v)$ equals zero for all $v\not\in \corner(\AxisAligned)$. Following the definition in~\cite{grinstead2012introduction}, a Markov chain is absorbing if it contains at least one absorbing state and if every state can reach an absorbing state (not necessarily in a single step). For such chains, Theorem 11.3 in~\cite{grinstead2012introduction} implies that the limiting distribution $\transition^\limiting$ assigns zero probability to transient states, that is, $\transition^\limiting(s,v) = 0$ for every $s\in\state$ and $v$ is not a corner of $\AxisAligned$. Thus, it suffices to show that $(\state,\transition)$ is absorbing, with corner points as absorbing states and all other points as transient states. To this end, we prove that from every non-corner point $s\in\state$, there exists a finite path to some corner point.

We use lexicographic order. Suppose $s$ is not a corner point of $\AxisAligned$. By construction, $s = \sum_r \lambda_r v_r^*$ for some $v_r^* \in \state$ with $\lambda_r > 0$ and $\sum_r \lambda_r = 1$. Recall that this representation is along a slice of the axis-aligned region with a suitably chosen $j$ such that all $x_{j'}$, $j'\ne j$ are fixed. The representation over this slice is such that, for each $r$, $s$ and $v_r^*$ differ in at least one component of $x_j$; let $j_r$ be the smallest such index. Without loss of generality, let $r=1$ minimize $\{j_r\}_{r}$.

If $v^*_{1j_1} > s_{j_1}$, then $v^*_1$ strictly majorizes $s$ lexicographically. If $v^*_{1j_1} < s_{j_1}$, then since $s_{j_1} = \sum_{i} \lambda_r v^*_{rj_1}$, there must exist some $k$ such that $v^*_{kj_1} > s_{j_1}$. Otherwise, \
\[0 = \sum_{r} \lambda_r (v^*_{rj_1} - s_{j_1}) \le \lambda_1 (v^*_{1j_1} - s_{j_1}),\]
where the first equality represents $s$ as a convex combination of $v^*_r$, the inequality follows because $\lambda_r > 0$ and we cannot locate $k$ only if $v^*_{rj_1} \le s_{j_1}$ for all $i > 1$. However, this contradicts our assumption that $\lambda_1 > 0$ and $v^*_{1j_1} < s_{j_1}$. Thus, $v^*_k$ strictly majorizes $s$ lexicographically. If $v^*_{1j_1} = s_{j_1}$, the minimality of $j_1$ as the first index where $v^*_1$ differs from $s$ is contradicted. Hence, $v^*_{1j_1} \neq v'_{j_1}$, and a strictly lexicographically larger neighbor $v^*$ always exists.

We extend the path by appending $v^*$ that majorizes $v'$ lexicographically and repeat. Since lexicographic order is a strict partial order and each step strictly increases, no cycles occur. The process terminates in finitely many steps at a corner point of $\AxisAligned$, as $\state$ is finite in size. Thus, $(\state,\transition)$ is absorbing, with corner points as absorbing states and all other states transient, thereby showing that 
\begin{equation}\label{eq:transientProbabilityZero}
\transition^\limiting(s,v) = 0, \text{ whenever } v\in \state\backslash \corner(\AxisAligned).
\end{equation}

Now, we show \eqref{eq:multConvCombination}. Consider $v'\in \state$. We prove by induction that for every positive integer $i$,
    \begin{equation}\label{eq:multransition}
    \bigl(v',\Mul(v')\bigr) =\sum_{v\in\state}\transition^i(v',v)\cdot \bigl(v,\Mul(v)\bigr).
    \end{equation}
    For $i = 1$, the statement holds by construction of the Markov chain. Let $V = \{v\mid \transition(v,v') > 0\}$ be the set of vertices with non-zero transition probability from $v'$. Then there exists a $j$ such that for all $v \in V$, $v'_{j'} = v_{j'}$ for $j' \in [k]\backslash\{j\}$. Thus, on the affine hull of $V$, $\Mul(\cdot)$ is affine and \eqref{eq:multransition} follows since $v' = \sum_{v\in V} \transition(v', v) v$. Suppose that the statement holds for $i = k-1$. Then
    \begin{align*}
     (v',\Mul(v')) &= \sum_{v\in\state}\transition^{k-1}(v',v)\cdot \bigl(v,\Mul(v)\bigr) \\
     &= \sum_{v\in\state} \transition^{k-1}(v',v)\sum_{v^*\in\state}\transition(v,v^*)\cdot \bigl(v^*,\Mul(v^*)\bigr)\\
    &=\sum_{v^*\in\state}\left(\sum_{v\in\state}\transition^{k-1}(v',v)\transition(v,v^*)\cdot \bigl(v^*,\Mul(v^*)\bigr)\right)\\
    &=\sum_{v^*\in\state}\transition^k(v',v^*)(v^*,\Mul(v^*)),
    \end{align*}
    where the first equality is by the induction hypothesis, second equality is by \eqref{eq:multransition} with $i=1$, and the third equality is by interchanging the summations. 
    By induction, the identity \eqref{eq:multransition} holds for all positive integers $i$. Taking the limit as $i\to\infty$, we obtain
     \[\bigl(v',\transition(v')\bigr) =\sum_{v\in\state}\lim_{i\to +\infty}\transition^i(v',v)\cdot \bigl(v, \Mul(v)\bigr) = \sum_{v\in\corner(\AxisAligned)}\transition^\limiting(v',v)(v,\Mul(v)),\]
    where the last equality follows from the definition of $\transition^\limiting(v',v)$ and \eqref{eq:transientProbabilityZero}. Thus, $\bigl(v',\Mul(v')\bigr) =\sum_{v\in\corner(\AxisAligned)}\transition^\limiting(v',v)\cdot \bigl(v,\Mul(v)\bigr)$ as required.

    Finally, we show \eqref{eq:arbitraryXMult}. Lemma~\ref{multilinear function} shows that $\bigl(x,\Mul(x)\bigr)$ can be written as a convex combination of points $\bigl(v',\Mul(v')\bigr)$ for $v'\in \vertex(P_i)$. We first provide this representation explicitly (see \eqref{eq:replaceAllCoord} below). For any $\bar{x}\in P_i$ and $v'_j\in \vertex(P_{ij})$, define $R_{j,v'_j}(\bar{x})\in \R^{n_1+\cdots+n_k}$ as the vector obtained by replacing $\bar{x}_j$ with $v'_j$. Formally,
    \begin{equation*}
        R_{j,v'_j}(\bar{x})_{j'} = \left\{\begin{aligned}
            &\bar{x}_{j'}  && j'\ne j\\
            &v'_j && \text{otherwise.}
        \end{aligned}\right.
    \end{equation*}
    Since $\Mul$ is affine over $P_{ij}$ when $x_{ij'}$ for $j'\ne j$ are fixed, it follows that:
    \begin{equation}\label{eq:replaceOneCoord}
        \bigl(\bar{x},\Mul(\bar{x})\bigr) = \sum_{v'_j\in \vertex(P_{ij})} \lambda(v'_j)\cdot \bigl(R_{j,v'_j}(\bar{x}), \Mul(R_{j,v'_j}(\bar{x})\bigr).
    \end{equation}
    Then, using \eqref{eq:replaceOneCoord} iteratively with $j$ from $k$ through $1$ and observing that $v' = R_{1,v'_1} \circ \cdots \circ R_{k,v'_k} (x)$, we have:
    \begin{equation}\label{eq:replaceAllCoord}
    \begin{split}
        \bigl(x,\Mul(x)\bigr) = \sum_{v'\in \vertex(P_{i})}\left(\prod_{j=1}^k\lambda(v'_j)\right)\cdot\bigl(v',\Mul(v')\bigr)
    \end{split}
    \end{equation}
    Using this representation of $\bigl(x,M(x)\bigr)$, we now derive \eqref{eq:arbitraryXMult} as follows:
    \begin{equation}
    \begin{split}
         \bigl(x,\Mul(x)\bigr) &= \sum_{v' \in \vertex(P_i)} \left(\prod_{j=1}^k \lambda(v'_j)\right)\cdot \bigl(v', \Mul(v')\bigr)\\
          &= \sum_{v' \in \vertex(P_i)} \left(\prod_{j=1}^k \lambda(v'_j)\right)\cdot \sum_{v\in\corner(\AxisAligned)}\transition^\limiting(v',v) \cdot \bigl(v, \Mul(v) \bigr)\\
          &= \sum_{v'\in \vertex(P_i)}\sum_{v\in \corner(\AxisAligned)}  \left(\prod_{j=1}^k \lambda(v'_j)\right)\transition^\limiting(v',v) \cdot \bigl(v,\Mul(v)\bigr),
    \end{split}
    \end{equation}
    where the first equality is because of \eqref{eq:replaceAllCoord}, the second equality is because of \eqref{eq:multConvCombination}, and the last moves $\prod_{j=1}^k \lambda(v'_j)$ inside the summation.
\end{proof}

\begin{corollary}\label{Hull}
    Let $\AxisAligned$ be a bounded axis-aligned region defined as in Definition~\ref{defn:generalCorners},  and let $\Mul: \AxisAligned \to \R$ be a multilinear function. The convex hull of $\graph(\Mul)$ is the convex hull of the following finite number of points:
    \[
    \bigl\{\bigl(v,\Mul(v)\bigr) \bigm| v\in\corner(\AxisAligned)\bigr\}. 
    \]
\end{corollary}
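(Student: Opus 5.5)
The plan is to prove two inclusions. Write $\point := \bigl\{(v,\Mul(v)) \bigm| v\in\corner(\AxisAligned)\bigr\}$ and $\graph(\Mul) := \bigl\{(x,\Mul(x)) \bigm| x\in\AxisAligned\bigr\}$.

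The easy inclusion is $\conv(\point)\subseteq\conv(\graph(\Mul))$. We noted after Definition~\ref{defn:generalCorners} that $\corner(\AxisAligned)\subseteq\disc(\AxisAligned)\subseteq\AxisAligned$, so $\point\subseteq\graph(\Mul)$, and taking convex hulls gives the inclusion. Since $\corner(\AxisAligned)\subseteq\disc(\AxisAligned)$ and $\disc(\AxisAligned)$ is a finite union of finite vertex sets of polytopes, $\point$ is finite. Here I use boundedness of $\AxisAligned$: the $P_{ij}$ are polytopes, so they have finitely many vertices.

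For the reverse inclusion it suffices to show $\graph(\Mul)\subseteq\conv(\point)$, because $\conv(\point)$ is convex. Take any $x\in\AxisAligned$. By the definition of an axis-aligned region there is an index $i$ with $x\in P_i=\prod_{j=1}^k P_{ij}$. For each $j$, write $x_j$ as a convex combination $x_j=\sum_{v'_j\in\vertex(P_{ij})}\lambda(v'_j)\,v'_j$; this exists by Minkowski's theorem for polytopes. Then invoke \eqref{eq:arbitraryXMult} of Theorem~\ref{thm:multilinear-extension}. It expresses $(x,\Mul(x))$ as $\sum_{v'}\sum_{v}\bigl(\prod_j\lambda(v'_j)\bigr)\transition^\limiting(v',v)\,(v,\Mul(v))$ with $v$ ranging over $\corner(\AxisAligned)$.

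It remains to check that these coefficients are convex multipliers. They are nonnegative because $\lambda\ge 0$ and $\transition^\limiting$ is a limit of stochastic matrices. For the sum, observe three facts:
\begin{itemize}
\item $\sum_{v'\in\vertex(P_i)}\prod_j\lambda(v'_j)=\prod_j\sum_{v'_j}\lambda(v'_j)=1$.
\item Each $v'\in\vertex(P_i)\subseteq\disc(\AxisAligned)\subseteq\state$, so $\transition^\limiting(v',\cdot)$ is a row of the limiting matrix.
\item By \eqref{eq:transientProbabilityZero}, that row places all its unit mass on $\corner(\AxisAligned)$, so $\sum_{v\in\corner(\AxisAligned)}\transition^\limiting(v',v)=1$.
\end{itemize}
Hence the double sum of coefficients equals $1$, and $(x,\Mul(x))\in\conv(\point)$.

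The main subtlety is the third fact: the rows of $\transition^\limiting$ sum to one when restricted to corners. This needs the chain to be absorbing with corner points exactly the absorbing states, which Theorem~\ref{thm:multilinear-extension} provides. I also need the limit of row-stochastic matrices to stay row-stochastic, which holds because the state space is finite.
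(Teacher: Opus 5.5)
Your proof is correct and is essentially the paper's argument: the corollary follows from \eqref{eq:arbitraryXMult} in Theorem~\ref{thm:multilinear-extension}, exactly as the paper uses that identity in the proof of Theorem~\ref{thm:simhull}. Your explicit check that the weights $\bigl(\prod_{j}\lambda(v'_j)\bigr)\transition^\limiting(v',v)$ are nonnegative and sum to one fills in a detail the paper leaves implicit; that check rests on $\transition^\limiting$ being row-stochastic together with \eqref{eq:transientProbabilityZero}.
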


Consider multiple multilinear compositions sharing the same variables and domain. We are interested in their \emph{simultaneous hull}.
\begin{definition}
    Let $M_i:D\rightarrow\R$ for $i=1,2,\cdots,m$. The simultaneous hull of the set $\{M_i\}_{i\in [m]}$ is defined as:
    \begin{equation*}
    \conv(\{(x,M_1(x),M_2(x),\cdots,M_m(x))\mid x\in D\}).
    \end{equation*}
\end{definition}

Extending our analysis from single multilinear compositions, we relax the simultaneous hull to a collection of multilinear functions defined over an axis-aligned region $\AxisAligned$.  We formalize this observation in the following theorem.
\begin{theorem}\label{thm:simhull}
Let $\AxisAligned$ be a bounded axis-aligned region, and for each $i \in [m]$, let $M_i: \AxisAligned \to \R$ be a multilinear function. The simultaneous hull of $\{M_i\}_{i\in [m]}$ is the convex hull of the set
\begin{equation}\label{eq:cornerpointsSimultaneous}
    \bigl\{\bigl(v,M_1(v),\ldots,M_m(v)\bigr)\bigm| v\in \corner(\AxisAligned)\bigr\}.
\end{equation}
\end{theorem}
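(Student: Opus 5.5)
The plan is to reduce Theorem~\ref{thm:simhull} to Theorem~\ref{thm:multilinear-extension}. The key observation is that the Markov chain built by Algorithm~\ref{alg:general markov} depends only on $\AxisAligned$, and not on any particular multilinear function. Denote the simultaneous hull by $\mathcal{S}$ and the convex hull of the set in \eqref{eq:cornerpointsSimultaneous} by $\mathcal{C}$.

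The inclusion $\mathcal{C}\subseteq\mathcal{S}$ is immediate. Every corner point lies in $\disc(\AxisAligned)\subseteq\AxisAligned$, so each generator $(v,M_1(v),\ldots,M_m(v))$ with $v\in\corner(\AxisAligned)$ is a point of the graph whose hull is $\mathcal{S}$. Since $\mathcal{S}$ is convex, it contains $\mathcal{C}$.

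For the reverse inclusion, it suffices to show that every graph point $(x,M_1(x),\ldots,M_m(x))$ with $x\in\AxisAligned$ lies in $\mathcal{C}$; convexity of $\mathcal{C}$ then gives $\mathcal{S}\subseteq\mathcal{C}$.
\begin{itemize}
\item Fix such an $x$ and choose a constituent product $P_i=\prod_{j=1}^kP_{ij}$ containing $x$.
\item For each $j$, write $x_j$ as a convex combination $x_j=\sum_{v'_j\in\vertex(P_{ij})}\lambda(v'_j)v'_j$.
\item Run Algorithm~\ref{alg:general markov} once on $\AxisAligned$ to obtain $(\state,\transition)$ and its limit $\transition^\limiting$.
\end{itemize}
Applying \eqref{eq:arbitraryXMult} of Theorem~\ref{thm:multilinear-extension} to each $M_\ell$ separately gives
\[
\bigl(x,M_\ell(x)\bigr)=\sum_{v'\in\vertex(P_i)}\sum_{v\in\corner(\AxisAligned)}\Bigl(\prod_{j=1}^k\lambda(v'_j)\Bigr)\transition^\limiting(v',v)\cdot\bigl(v,M_\ell(v)\bigr).
\]
The weights $w(v)=\sum_{v'}\bigl(\prod_j\lambda(v'_j)\bigr)\transition^\limiting(v',v)$ are the same for every $\ell$. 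This holds because $\lambda$ depends only on $x$ and $P_i$, and $\transition^\limiting$ depends only on $\AxisAligned$. The weights are nonnegative, and they sum to one because each row of $\transition^\limiting$ restricted to the absorbing states sums to one by \eqref{eq:transientProbabilityZero}. Stacking the $m$ identities, which share the same $x$-component, yields
\[
\bigl(x,M_1(x),\ldots,M_m(x)\bigr)=\sum_{v\in\corner(\AxisAligned)}w(v)\bigl(v,M_1(v),\ldots,M_m(v)\bigr),
\]
so the point lies in $\mathcal{C}$. Finiteness of $\corner(\AxisAligned)$ is inherited from the finiteness of $\disc(\AxisAligned)$.

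The main point needing care is that the chain and its convex multipliers are truly function-independent. Algorithm~\ref{alg:general markov} never consults $\Mul$: the transition probabilities are just the multipliers expressing $v$ as a convex combination of vertices of a slice. The only place $\Mul$ enters the proof of Theorem~\ref{thm:multilinear-extension} is the step where $\Mul$ is affine along a slice in which only the $j$-th block varies. That step holds for every multilinear $M_\ell$ simultaneously, so the argument goes through verbatim for the vector-valued map $(M_1,\ldots,M_m)$, each of whose components is multilinear. Equivalently, one can apply \eqref{eq:multConvCombination} directly to this vector map.
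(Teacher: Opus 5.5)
Your proposal is correct and follows the paper's proof essentially verbatim. Both apply \eqref{eq:arbitraryXMult} to each $M_\ell$ separately, observe that the weights $\bigl(\prod_{j}\lambda(v'_j)\bigr)T^*(v',v)$ depend only on $x$ and the region (not on $M_\ell$), and stack the $m$ identities; you additionally spell out the easy inclusion and the fact that these weights are nonnegative and sum to one, which the paper leaves implicit.
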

\begin{proof}
    The functions $\bigl(M_1(x),\ldots, M_m(x)\bigr)$ share the same convex extension Markov chain $(\state, \transition)$ and, therefore, also share $\transition^\limiting$. Moreover, from \eqref{eq:arbitraryXMult}, it follows that:
\begin{equation*}
         \bigl(x,\Mul_i(x)\bigr) 
          = \sum_{v'\in \vertex(P_i)}\sum_{v\in \corner(\AxisAligned)}  \left(\prod_{j=1}^k \lambda(v'_j)\right)\transition^\limiting(v',v) \cdot \bigl(v,\Mul_i(v)\bigr).
\end{equation*}
Since $\left(\prod_{j=1}^k \lambda(v'_j)\right)\transition^\limiting(v',v)$ is independent of $\Mul_i$, it follows that:
\begin{equation}\label{eq:arbitraryXMMult}
\begin{aligned}
         &\bigl(x,\Mul_1(x),\ldots,\Mul_m(x)\bigr) \\
         &\quad = \sum_{v'\in \vertex(P_i)}\sum_{v\in \corner(\AxisAligned)}  \left(\prod_{j=1}^k \lambda(v'_j)\right)\transition^\limiting(v',v) \cdot \bigl(v,\Mul_1(v),\ldots,\Mul_m(v)\bigr).
\end{aligned}
\end{equation}
The identity implies that any point in the graph of the functions over $\AxisAligned$ lies within the convex hull of their evaluations at the corners of $\AxisAligned$. Consequently, it suffices to consider the points given in \eqref{eq:cornerpointsSimultaneous} to characterize the simultaneous hull.
\end{proof}

A special case arises when each $x_i$ is a scalar, \textit{i.e.}, $n_j=1$ for all $j$:
\begin{align*}
    &\Mul(x_1,x_2,\cdots,x_k)\\
    &x_i\in \AxisAligned\quad i\in [k]
\end{align*}
Here, each $P_{ij}$ is an interval, making $\AxisAligned$ a union of axis-aligned hypercubes. This scenario generalizes the treatment in Section~\ref{sec:bilinear-over-nonbox}. If linking constraints exist, cells exterior to the region are removed as described previously. We specialize Theorem~\ref{thm:simhull} to this setting as follows.

\begin{corollary}\label{simplest convex extension}
  Let $\AxisAligned$ be an axis-aligned region in $\R^n$, that is, $\AxisAligned$ is a union of axis-aligned hypercubes, and for $i\in[m]$, let $M_i:\AxisAligned  \to \R$  be a multilinear function. Then, the simultaneous hull of $\{M_i\}_{i\in[m]}$ over $\AxisAligned$ is the convex hull of:
     \begin{equation}\label{eq:simplestn-d-points}
         \bigl\{(v,M_1(v),M_2(v),\cdots,M_m(v)) \bigm| v\in\corner(\AxisAligned)\bigr\}
     \end{equation}
\end{corollary}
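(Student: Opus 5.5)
The plan is to obtain Corollary~\ref{simplest convex extension} as the special case of Theorem~\ref{thm:simhull} with $k=n$ blocks and $n_j=1$ for every $j\in[n]$. First I will recast $\AxisAligned$ as an axis-aligned region in the sense of the general definition. By hypothesis $\AxisAligned=\bigcup_{i}\prod_{j=1}^n [a_{ij},b_{ij}]$ is a finite union of axis-aligned boxes. Each interval $P_{ij}=[a_{ij},b_{ij}]$ is a polytope in $\R^1$, so $\AxisAligned$ is a finite union of products of polytopes, each factor lying in $\R^{n_j}=\R$. Boundedness is automatic because the union is finite and each box is bounded.

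Next I will check that the two definitions of corner agree. Definition~\ref{defn:generalCorners} specialized to scalar blocks requires, for each $j'$, that $v_{j'}$ be an extreme point of the convex hull of the one-dimensional slice $\{x_{j'}\mid x\in\AxisAligned,\ x_j=v_j\ \forall j\neq j'\}$. That convex hull is an interval, so its extreme points are exactly its two endpoints. This is the coordinate-wise extremality used in Section~\ref{sec:bilinear-over-nonbox}, so $\corner(\AxisAligned)$ in the corollary is the set used in Theorem~\ref{thm:simhull}. I will also note that $\vertex(P_{ij})=\{a_{ij},b_{ij}\}$, so $\disc(\AxisAligned)$ is the set of box vertices. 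The state space of Algorithm~\ref{alg:general markov} is then finite, which is part of Theorem~\ref{thm:multilinear-extension}.

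Third, the multilinearity hypotheses coincide. A function $M_i$ that is multilinear in the scalars $x_1,\dots,x_n$ is affine in each block $x_j\in\R$ when the other blocks are fixed. This is exactly the property used in the proof of Theorem~\ref{thm:multilinear-extension}, both for the one-step identity \eqref{eq:multransition} and for \eqref{eq:replaceOneCoord}. Applying Theorem~\ref{thm:simhull} then gives that the simultaneous hull equals the convex hull of \eqref{eq:simplestn-d-points}.

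For completeness, I will also spell out the two inclusions. The points in \eqref{eq:simplestn-d-points} lie on the simultaneous graph, since corners belong to $\AxisAligned$ (each slice is nonempty and contains $v$). Conversely, every graph point is a convex combination of them by \eqref{eq:arbitraryXMMult}. The only potential obstacle is confirming that the scalar corner notion matches Definition~\ref{defn:generalCorners}, and the interval observation above settles that; the rest is direct specialization.
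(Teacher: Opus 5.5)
Your proposal is correct and follows the paper's own route: the corollary is Theorem~\ref{thm:simhull} specialized to $k=n$ scalar blocks ($n_j=1$), so each $P_{ij}$ is an interval and Definition~\ref{defn:generalCorners} reduces to coordinate-wise extremality. Your added checks are sound and only make explicit what the paper leaves implicit: fixing the scalar block structure (which the corner definition depends on), boundedness, corners lying in $\AxisAligned$ because extreme points of the convex hull of a compact slice belong to the slice, and multilinearity meaning affinity in each coordinate.
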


Theorem~\ref{thm:simhull} unifies multilinear compositions with products of more general functions, extending beyond the specialized treatment for binary products in Section~\ref{sec:prod-over-nonbox}. This is achieved by approximating each function graph as a union of polytopes using an auxiliary variable to represent the function. This approach permits more flexible finite point selections for creating convex hulls. For instance, while Figure~\ref{fig:tessByVox} depicts the tessellation yielding the point set in Theorem~\ref{thm:functionGraphConvexHull}, based on the pentagonal outer-approximation of $x_i^2$ from Example~\ref{ex:comparison-oneestimator}, Theorem~\ref{thm:simhull} admits more general tessellations such as the one depicted in Figure~\ref{fig:tessGeneral}.

\begin{figure}[ht]
\centering
\begin{subfigure}[t]{0.45\linewidth}
\centering
\includegraphics{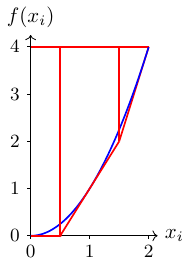}
\caption{Tessellation used by voxelization}\label{fig:tessByVox}
\end{subfigure}
\hfill
\begin{subfigure}[t]{0.45\linewidth}
\centering
\includegraphics{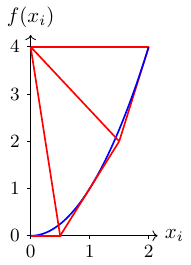}
\caption{An arbitrary tessellation}\label{fig:tessGeneral}
\end{subfigure}
\caption{Voxelization and Tessellation}
\label{fig:voxelization and tessellation}
\end{figure}

\subsection{Convergence Results}\label{sec:convergence}
In this subsection, we establish the convergence of our polyhedral relaxations. Specifically, we focus on the graph of a special class of multilinear compositions 
\begin{equation}\label{eq:multCompositionSet}
\bigl\{(x, \mu ) \bigm| x \in D ,\   \mu_i = M_i\bigl(f_1(x),\ldots, f_k(x)\bigr) \text{ for } i \in [m]\bigr\},
\end{equation}
where we assume that $D$ is a bounded domain, the outer-function $M_i: \R^k \to \R$ is a multilinear function, and each inner-function $f_j: \R^n \to \R$ is bounded. With a sequence of axis-aligned outer-approximations $\{\AxisAligned^\nu\}_{\nu \in N}$ of $\graph(f)$, we associate a sequence of sets $\{\point^\nu\}_{\nu \in N}$, each consisting of finitely many points defined as follows:
\begin{equation}\label{eq:sequenceQ}
\point^\nu = \bigl\{(x,\mu) \bigm| (x,t) \in \corner(\AxisAligned^\nu),\ \mu_i = M_i(t) \for i \in [m] \bigr\}.
\end{equation}
Our convergence analysis replies on the Hausdorff distance. Recall that for two subsets $C$ and $D$ of $\R^n$, the \textit{Hausdorff distance} between $C$ and $D$ is the quantity 
$d_\infty (C,D) := \inf \bigl\{\eta \geq 0 \bigm| C \subseteq D +\eta B_n,\ D\subseteq C + \eta B_n  \bigr\}$,
where $B_n$ is the unit Euclidean ball in $\R^n$. A sequence $C^{\nu}$ of sets is said to \textit{converge with respect to Hausdorff distance} to $C$, denoted as $C^\nu \to C$, if  $d_\infty(C^{\nu},C) \to 0$ as $\nu$ goes to infinity. 

We first  present a procedure for generating axis-aligned outer-approximations of a bounded set described by a system of nonlinear inequalities. 

\begin{algorithm}[ht]
\caption{An axis-aligned outer-approximation procedure}\label{alg:axis-aligned}
\begin{algorithmic}[1]
\Require A bounded region $S$  given by a system of inequalities $g_j(x) \leq 0$ for $j \in [m]$, finite bounds $x^L$ and $x^U$ on $x$, and a tolerance $\epsilon$ 
\State Initialize unprocessed stack $\mathcal{U} \gets [x^L, x^U] $
\While{$\mathcal{U} \neq \emptyset$}
  \State  $B \gets \text{pop}(\mathcal{U})$
  \State $[g^L,g^U] \gets \texttt{range}(g,B)$
  \If{$g^U_j \leq 0$ for all $j \in [m]$ }
  \State Push the box $B$ to $\rects$ 
  \State  \textbf{continue}
  \ElsIf{$\exists j \in [m]$ such that $g^L_j > 0 $}\label{algstep:outsideBox}
  \State \textbf{continue}
  \EndIf
  \If{ $\diam(B) \geq \epsilon$}
      \State Push a set of boxes $\texttt{split}(B)$ to $\mathcal{U}$
      \Else 
      \State Push the box $B$ to $\rects$.
   \EndIf
\EndWhile
\State \Return An axis-aligned region $\AxisAligned: = \cup_{B \subseteq \rects} B$.
\end{algorithmic}
\end{algorithm}
For a box $B$, the procedure $\texttt{range}(g,B)$ returns an $m$-dimensional box $[g^L,g^U]$, where $g^L, g^U \in \R^m$, such that the image $g(B)$ is contained within it. For a Lipschitz-continuous function $g$ with constant $K$ over $B$,  a valid enclosure is obtained by considering a point $c$ of $B$  and defining: 
\[
g^L_j:=g_j(c) -K \max_{x\in B} \| x - c\| \quad \text{ and} \quad
g^U_j:= g_j(c) +K \max_{x\in B} \| x - c\|.
\]
This range estimation is consistent, that is, as the diameter of $B$ approaches zero, the diameter of $[g^L,g^U]$ approaches zero as well. Here, the diameter of a set $C$ is defined as $\diam(C): = \sup_{x,y \in D} \|x-y \|$. Although this Lipschitz-based bound is consistent, it can be loose. To get a tighter estimation of the range, one can use interval arithmetic \citep[Section 6]{moore2009introduction}. Finally, for a given box $B$, the procedure $\texttt{split}$ uniformly partitions it into $2^n$ sub-boxes by bisecting each dimension. 
\begin{lemma}\label{lemma:axisalign}
Let $C:=\bigl\{x \bigm| x^L \leq  x \leq x^U,\ g(x) \leq 0\bigr\}$, where $x^L, x^U \in \R^n$, and $g: [x^L, x^U] \to \R^m$ is Lipschitz continuous. For $\nu \in \N$, let $\AxisAligned^\nu$ be the axis-aligned region generated by Algorithm~\ref{alg:axis-aligned} with tolerance $\epsilon := \frac{1}{\nu}$. Then,  $\AxisAligned^\nu$ outer-approximates the domain $C$, and $\AxisAligned^\nu \to C$. 
\end{lemma}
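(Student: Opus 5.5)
The plan has two parts: first show that $C \subseteq \AxisAligned^\nu$ for every $\nu$, then show that the excess $\AxisAligned^\nu\setminus C$ lies within a vanishing neighborhood of $C$. Together these give Hausdorff convergence, since only the inclusion $\AxisAligned^\nu \subseteq C + \eta B_n$ remains to be controlled.

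\emph{Outer approximation.} Algorithm~\ref{alg:axis-aligned} discards a box $B$ only at Step~\ref{algstep:outsideBox}, that is, when $g^L_j>0$ for some $j$. Because $\texttt{range}$ returns an enclosure of $g(B)$, this gives $g_j(x)\ge g_j^L>0$ for all $x\in B$, so $B\cap C=\emptyset$. Every other box is either split into sub-boxes whose union is $B$ or pushed to $\rects$. By induction over the stack, every point of $C$ lies in some box that eventually enters $\rects$.

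The algorithm must also terminate. Each split halves the diameter, and splitting occurs only when $\diam(B)\ge\epsilon$. Hence the recursion depth is at most about $\log_2(\diam([x^L,x^U])/\epsilon)+1$, and the number of boxes is finite. This proves $C\subseteq \AxisAligned^\nu$.

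\emph{Convergence.} Let $\epsilon = 1/\nu$. Take any box $B\in\rects$ produced with tolerance $\epsilon$. It was accepted for one of two reasons.

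In the first case $g^U\le 0$. Then $B\subseteq C$ and $B$ contributes no excess.

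In the second case $\diam(B)<\epsilon$ and no $g^L_j>0$. With the Lipschitz enclosure centered at $c\in B$, this means $g_j(c)\le K\max_{x\in B}\|x-c\|\le K\epsilon$ for all $j$. Every point $x\in B$ then satisfies $g_j(x)\le 2K\epsilon$ and lies within $\epsilon$ of $c$. So $\AxisAligned^\nu \subseteq C_{2K/\nu} + \tfrac1\nu B_n$, where $C_\delta := \{x\in[x^L,x^U]\mid g(x)\le \delta\}$.

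It therefore suffices to show $d_\infty(C_\delta, C)\to 0$ as $\delta\downarrow 0$. This is the main obstacle.

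I would argue this by compactness, assuming $C\neq\emptyset$; if $C$ is empty, the same argument shows $\AxisAligned^\nu=\emptyset$ eventually. Suppose instead there are $\eta>0$ and points $x^\nu\in C_{\delta_\nu}$ with $\delta_\nu\to0$ and $\operatorname{dist}(x^\nu,C)\ge\eta$. Extract a convergent subsequence in the compact box $[x^L,x^U]$. By continuity of $g$, its limit $\bar x$ satisfies $g(\bar x)\le 0$, so $\bar x\in C$. This contradicts $\operatorname{dist}(x^\nu, C)\ge \eta$ for large $\nu$, since $\operatorname{dist}(\cdot,C)$ is continuous. The same compactness argument, applied directly to points of $\AxisAligned^\nu\setminus(C+\eta B_n)$, gives the conclusion in one step.

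Combining the two inclusions gives $d_\infty(\AxisAligned^\nu,C)\to 0$. If a tighter consistent enclosure such as interval arithmetic is used in place of the Lipschitz one, the same argument goes through using only consistency: $\diam(B)\to0$ implies $g^U-g^L\to 0$ uniformly.
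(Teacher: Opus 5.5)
Your proposal is correct and follows essentially the same route as the paper. Both arguments rest on the same steps: discarded boxes miss $C$ because $g_j\ge g_j^L>0$ on them, halving the diameter forces finite termination, and every accepted box not certified to lie inside $C$ sits in a sublevel set $C_\delta$ with $\delta\to 0$, after which $C_\delta\to C$. The paper writes the bound as $g_j(x)\le g_j^U-g_j^L\le\diam(\texttt{range}(g,B))$, which for the Lipschitz enclosure is exactly your $2K\epsilon$. Your compactness proof of $C_\delta\to C$ and your treatment of $C=\emptyset$ fill in points the paper only asserts. The extra $\tfrac1\nu B_n$ in your inclusion is harmless but unnecessary, since you have already shown $\AxisAligned^\nu\subseteq C_{2K/\nu}$.
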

\begin{proof}
For any $\nu \in \N$,  Algorithm~\ref{alg:axis-aligned} terminates in a finite number of steps. The uniform \texttt{split} procedure partitions a box $B$ into $2^n$ sub-boxes by bisecting each dimension, which reduces the diameter of the resulting boxes by a factor of $2$ at each depth of the search tree. Specifically, at depth $k$, the diameter is $\diam(B^{(k)}) = 2^{-k} \diam([x^L, x^U])$. Since $\epsilon = \frac{1}{\nu}$, there exists a finite $k$ such that $\diam(B^{(k)}) < \epsilon$, at which point the algorithm either accepts or discards the box without further splitting. 

To show $C \subseteq \AxisAligned^\nu$, observe that a box $B$ is discarded if and  only if the condition at Line~\ref{algstep:outsideBox} is satisfied. For a discarded box $B$, there exists $j \in [m]$ such that $g^L_j>0$ and, therefore, for any $x \in B$, $g_j(x)\geq g_j^L>0$, implying $x \notin C$. Thus, the boxes discarded have empty intersection with $C$. Since all other boxes are collected in $\rects$, their union $\AxisAligned^\nu$  contains $C$.

We show that $d_\infty(\AxisAligned^\nu, C) \to 0$ as $\nu \to \infty$. Since $C \subseteq \AxisAligned^\nu$, the Hausdorff distance is given by $d_\infty(\mathcal{R}^\nu, C) = \sup_{x \in \AxisAligned^\nu} \text{dist}(x, C)$. Let $x \in \AxisAligned^\nu$. Then, $x$ belongs to some box $B \in \rects$. Let $[g^L,g^U]$ be $\texttt{range}(g,B)$. If $g^U_j\leq 0$ for $j \in [m]$ then $B \subseteq C$ and thus $\text{dist}(x, C) = 0$. Otherwise, there exists $k \in [m]$ such that $g_k^U>0$. Since such a box $B$ has been pushed to $\rects$, $\diam(B) \leq \epsilon := \frac{1}{\nu}$ and $g^L_j\leq 0$ for every $j \in [m]$. Consequently,  for each $ j \in [m]$, we have
\[
g_j(x) \leq g_j^U = (g_j^U - g_j^L) +g_j^L \leq g_j^U - g_j^L \leq \diam\bigl(\texttt{range}(g,B)\bigr). 
\]
As $\nu \to \infty$, the consistency of the \texttt{range} procedure ensures that $\diam\bigl(\texttt{range}(g,B)\bigr) \to 0$, which implies that for every $j \in [m]$, $g_j(x) \leq \delta(\nu) $ where $\delta(\nu) \to 0^+$. Since $g$ is continuous and the domain is compact, the sublevel sets $C(\delta) := \{x \in [x^L, x^U] \mid g(x) \leq \delta\}$ converge to $C$ in the Hausdorff sense as $\delta \to 0^+$. Thus, $\text{dist}(x, C) \to 0$ as $\nu \to \infty$, completing the proof. 
\end{proof}

Next, we present two technical results; Lemma~\ref{lemma:graphconvergence} follows from Theorems~4.10 and 4.26 in \cite{rockafellar1998variational}, and Lemma~\ref{lemma:convergence} follows from Theorem~4.10 and Proposition 4.30 in \cite{rockafellar1998variational}. However, we provide direct simple proofs needed for our setting.

\begin{lemma}\label{lemma:graphconvergence}
    Let $g: \R^n \to \R^m$ be a continuous function, and let $\{C^\nu\}_{\nu \in \N}$ be a sequence of bounded sets in $\R^n$ such that $C^\nu \to C$. Then, $g(C^\nu) \to g(C)$. 
\end{lemma}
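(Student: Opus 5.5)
Throughout, I read $C^\nu \to C$ in the Hausdorff sense $d_\infty(C^\nu,C)\to 0$ and take $C$ to be bounded. Both sets then lie in a common compact set. Concretely, since $d_\infty(C^\nu,C)\to 0$, for all large $\nu$ we have $C^\nu \subseteq C + B_n$, so all $C^\nu$ and $C$ lie in the compact set $K := \mathrm{cl}(C) + B_n$.

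The plan is to reduce everything to uniform continuity of $g$ on $K$. Since $g$ is continuous and $K$ is compact, $g$ is uniformly continuous on $K$. So for every $\eta>0$ there is $\delta>0$ such that $x,y\in K$ with $\|x-y\|\le\delta$ imply $\|g(x)-g(y)\|\le\eta$.

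Fix $\eta>0$, take the corresponding $\delta$, and choose $\nu_0$ so that $d_\infty(C^\nu,C)<\delta$ for all $\nu\ge\nu_0$. The two inclusions are then checked separately.
\begin{itemize}
\item[(i)] $g(C^\nu)\subseteq g(C)+\eta B_m$. For $y=g(x)$ with $x\in C^\nu$, pick $x'\in C$ with $\|x-x'\|\le\delta$; this exists because $C^\nu\subseteq C+\delta' B_n$ for some $\delta'<\delta$. Both $x$ and $x'$ lie in $K$, so $\|g(x)-g(x')\|\le\eta$.
\item[(ii)] $g(C)\subseteq g(C^\nu)+\eta B_m$. This is symmetric: for $x'\in C$, pick $x\in C^\nu$ within distance $\delta$ and apply the same estimate.
\end{itemize}
Together these give $d_\infty\bigl(g(C^\nu),g(C)\bigr)\le\eta$ for all $\nu\ge\nu_0$. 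Since $\eta$ was arbitrary, $g(C^\nu)\to g(C)$.

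The only delicate point is the passage from $\inf$ to an actual near point. The Hausdorff distance is an infimum over $\eta$, and $C^\nu$ need not be closed. I handle this by using the strict inequality $d_\infty<\delta$, which guarantees a $\delta'<\delta$ with $C^\nu\subseteq C+\delta' B_n$. A point within distance $\delta'$ then exists by definition of Minkowski addition, so no closedness is needed.

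The other thing to watch is that the compact set $K$ contains every point used in the estimates. This holds because all points involved lie in $C$ or in $C^\nu$ with $\nu\ge\nu_0$, and both are inside $K$ once $\nu_0$ is also taken large enough that $d_\infty(C^\nu,C)<1$. Finitely many early indices are irrelevant to the limit.
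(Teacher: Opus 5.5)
Your proof is correct and follows the same route as the paper: uniform continuity of $g$ on a compact set, followed by the two Hausdorff inclusions once $d_\infty(C^\nu,C)<\delta$. Your enlarged compact set $K=\mathrm{cl}(C)+B_n$ is in fact more careful than the paper's argument, which invokes uniform continuity only on $\bar{C}$ even though the points $x\in C^\nu$ it is applied to need not lie in $\bar{C}$; also, you need not assume $C$ is bounded, since $C\subseteq C^\nu+\eta B_n$ is bounded as soon as $d_\infty(C^\nu,C)<\eta<\infty$.
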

\begin{proof}
We aim to show that $d_\infty(g(C^\nu),g(C))\rightarrow 0$ as $\nu\rightarrow \infty$. Since $C$ is a bounded subset of $\R^n$, its closure $\bar{C}$ is compact. The function $g$ is continuous on $\R^n$, and, therefore, by Heine-Cantor Theorem, uniformly continuous on the compact set $\bar{C}$. By the definition of uniform continuity, for every $\epsilon > 0$, there exists a $\delta > 0$ such that for all $x,y \in \bar{C}$:
\begin{equation*}
\|x-y\| < \delta \Longrightarrow \|g(x)-g(y)\| < \epsilon.
\end{equation*}
Since $C^\nu \rightarrow C$, there exists an integer $N$ such that for all $\nu > N$, $d_\infty(C^\nu,C) < \delta$. This implies that for every $x\in C^\nu$, there exists $y\in C$ such that $\|x-y\| < \delta$. By uniform continuity, $\|g(x) - g(y)\| < \epsilon$. Thus, $g(C^\nu) \subseteq g(C) + \epsilon B_m$, where $B_m$ is a Euclidean unit ball in $\R^m$. A symmetric argument shows that $g(C) \subseteq g(C^\nu) + \epsilon B_m$. Combining these inclusions, we have $d_\infty(g(C^\nu),g(C)) \le \epsilon$ for all $\nu > N$. Since $\epsilon$ was arbitrary, $d_\infty(g(C^\nu),g(C)) \to 0$, proving that $g(C^\nu) \to g(C)$.
\end{proof}
\begin{lemma}\label{lemma:convergence}
If a sequence  $\{C^\nu\}_{\nu \in \N}$ of bounded sets in $\R^n$ converges to $C$ then $\conv(C^\nu) \to \conv(C)$. In particular $d_\infty\bigl(\conv(C^\nu),\conv(C)\bigr) \le d_\infty(C^\nu,C)$.
\end{lemma}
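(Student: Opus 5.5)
The plan is to prove the quantitative bound $d_\infty\bigl(\conv(C^\nu),\conv(C)\bigr) \le d_\infty(C^\nu,C)$ directly. Convergence then follows immediately from $d_\infty(C^\nu,C)\to 0$. Fix $\nu$ and any $\eta > d_\infty(C^\nu,C)$. By the definition of the Hausdorff distance, $C^\nu \subseteq C + \eta B_n$ and $C \subseteq C^\nu + \eta B_n$. It then suffices to show the two inclusions
\[
\conv(C^\nu) \subseteq \conv(C) + \eta B_n \quad\text{and}\quad \conv(C) \subseteq \conv(C^\nu) + \eta B_n .
\]
Taking the infimum over such $\eta$ then gives the inequality.

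The key step is the observation that the Minkowski sum of two convex sets is convex, together with the identity $\conv(A + B) = \conv(A) + \conv(B)$, of which we only need the inclusion $\subseteq$. Concretely, $C + \eta B_n \subseteq \conv(C) + \eta B_n$, and the right-hand side is convex because both $\conv(C)$ and $\eta B_n$ are convex. Hence $\conv(C^\nu) \subseteq \conv(C+\eta B_n) \subseteq \conv(C) + \eta B_n$.

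One can also verify this by hand. Take $x = \sum_r \lambda_r x_r$ with $x_r \in C^\nu$, and pick $y_r \in C$ with $\|x_r - y_r\| \le \eta$. Then $y := \sum_r \lambda_r y_r \in \conv(C)$, and $\|x - y\| \le \sum_r \lambda_r \|x_r - y_r\| \le \eta$ by the triangle inequality. The symmetric inclusion follows by swapping the roles of $C$ and $C^\nu$.

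The only delicate point is the infimum and strictness in the definition of $d_\infty$. The choice $y_r$ with $\|x_r-y_r\|\le\eta$ is available for every $\eta$ strictly greater than $d_\infty(C^\nu,C)$, since the inclusion $C^\nu\subseteq C+\eta B_n$ holds for all such $\eta$. Letting $\eta \downarrow d_\infty(C^\nu,C)$ yields the bound. Boundedness of the sets only ensures the distances are finite; no closedness is needed, since we never take limits of points. I expect no real obstacle; the main care is stating the convexity of the Minkowski sum cleanly.
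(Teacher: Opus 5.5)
Your proof is correct and follows the same route as the paper. Both arguments convexify the two inclusions $C^\nu \subseteq C+\eta B_n$ and $C \subseteq C^\nu+\eta B_n$, using that $\conv(C)+\eta B_n$ is convex (the paper cites $\conv(X+Y)=\conv(X)+\conv(Y)$). Taking $\eta > d_\infty(C^\nu,C)$ and then an infimum is slightly more careful than the paper. The paper sets $\epsilon = d_\infty(C^\nu,C)$ and uses $C^\nu \subseteq C+\epsilon B_n$ directly, but that inclusion can fail when the infimum is not attained, for instance when $C$ is not closed.
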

\begin{proof}
    We show that $d_\infty\bigl(\conv(C^\nu),\conv(C)\bigr) \le d_\infty(C^\nu,C)$. Assume $d_\infty(C^\nu,C) = \epsilon$. Then,
    \begin{equation*}
    C^\nu \subseteq C + \epsilon B_n
    \Longrightarrow \conv(C^\nu) \subseteq \conv(C + \epsilon B_n) = \conv(C) + \epsilon B_n
    \end{equation*}
    where the implication follows by convexifying both sides and the equality because, for sets $X,Y$ $\conv(X+Y) = \conv(X) + \conv(Y)$ and $\epsilon B_n$ is convex. A symmetric argument shows that $C\subseteq C^\nu + \epsilon B_n$ implies $\conv(C)\subseteq \conv(C^\nu) + \epsilon B_n$. Therefore, $d_\infty\bigl(\conv(C^\nu),\conv(C)\bigr) \le \epsilon$, or  $d_\infty\bigl(\conv(C^\nu),\conv(C)\bigr) \le d_\infty(C^\nu,C)$. Then, $C^\nu\rightarrow C$ implies $\conv(C^\nu)\rightarrow \conv(C)$.
\end{proof}

\begin{theorem}\label{thm:convergence}
    Let  $\G$ be the graph defined as in \eqref{eq:multCompositionSet}. Assume that the domain $D$ is bounded and is described by a system of nonlinear inequalities $g(x) \leq 0$ and $x^L \leq x \leq x^U$. Moreover, assume that each $g(\cdot)$ and $f(\cdot)$ is Lipschitz continuous. For $\nu \in \N$, let  $\AxisAligned^\nu$ be the region returned by Algorithm~\ref{alg:axis-aligned} when it is invoked with $S$ as $\graph(f)$ and a tolerance $\epsilon$ of $\frac{1}{\nu}$. Then, for all $\nu$, $\AxisAligned^\nu$ is an axis-aligned outer-approximation of $\graph(f)$. For each element in the sequence $H^\nu$, construct the point set $\point^\nu$ as in Equation~\eqref{eq:sequenceQ}.  Then, for every $\nu \in \N$, $\conv(\point^v)$ is a polyhedral relaxation of $\G$. Moreover, $\conv(\point^\nu) \to \conv(\G)$. 
\end{theorem}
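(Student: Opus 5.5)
The plan is to establish the three claims in order: outer-approximation, polyhedrality and validity, then convergence. All three rest on Lemma~\ref{lemma:axisalign}, Corollary~\ref{Hull}/Theorem~\ref{thm:simhull}, and Lemmas~\ref{lemma:graphconvergence} and~\ref{lemma:convergence}.

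\textbf{Step 1: outer-approximation.} View $\graph(f)$ restricted to $D$ as the set
\[
S=\bigl\{(x,t)\bigm| x^L\le x\le x^U,\ t^L\le t\le t^U,\ g(x)\le 0,\ t-f(x)\le 0,\ f(x)-t\le 0\bigr\},
\]
where $[t^L,t^U]$ bounds the bounded functions $f_j$. The defining functions are Lipschitz because $g$ and $f$ are. Lemma~\ref{lemma:axisalign} then says $\AxisAligned^\nu\supseteq S$ and $\AxisAligned^\nu\to S$ in the Hausdorff metric. Algorithm~\ref{alg:axis-aligned} returns a finite union of boxes, so $\AxisAligned^\nu$ is axis-aligned.

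\textbf{Step 2: polyhedral relaxation.} Define the lifted functions $\tilde M_i(x,t):=M_i(t)$. Each is multilinear when each coordinate of $(x,t)$ is treated as its own block, as in Corollary~\ref{simplest convex extension}. Applying Theorem~\ref{thm:simhull} to $\AxisAligned^\nu$ with the $\tilde M_i$ gives
\[
\conv\bigl\{(x,M(t))\bigm| (x,t)\in\AxisAligned^\nu\bigr\}=\conv(\point^\nu),
\]
after the linear projection that drops $t$ (convex hull commutes with linear maps). Since $\corner(\AxisAligned^\nu)$ is finite, $\conv(\point^\nu)$ is a polytope. Every point $(x,M(f(x)))$ with $x\in D$ arises from some $(x,f(x))\in S\subseteq\AxisAligned^\nu$, so $\G\subseteq\conv(\point^\nu)$, which gives validity.

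\textbf{Step 3: convergence.} Let $h(x,t):=(x,M_1(t),\dots,M_m(t))$, a continuous map. Then $\G=h(S)$ and $\{(x,M(t))\mid(x,t)\in\AxisAligned^\nu\}=h(\AxisAligned^\nu)$. Lemma~\ref{lemma:graphconvergence} gives $h(\AxisAligned^\nu)\to h(S)$. Lemma~\ref{lemma:convergence} then yields $\conv(h(\AxisAligned^\nu))\to\conv(\G)$, and by Step 2 the left-hand side is exactly $\conv(\point^\nu)$.

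\textbf{Main obstacle.} The delicate point is Lemma~\ref{lemma:graphconvergence}. Its proof uses uniform continuity on $\bar C$, but points of $C^\nu$ lie only near $C$. I would handle this by invoking uniform continuity of $h$ on a compact neighborhood of the bounding box, which contains every $\AxisAligned^\nu$. This works because $h$ is polynomial in $t$, hence continuous everywhere.

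A smaller check is that Algorithm~\ref{alg:axis-aligned} is applied to equality constraints written as two inequalities. Lemma~\ref{lemma:axisalign} only needs Lipschitz inequality constraints, so this causes no difficulty.
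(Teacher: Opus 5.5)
Your proposal is correct and follows the paper's proof essentially step for step. You use Lemma~\ref{lemma:axisalign} for the outer-approximation and the Hausdorff convergence of $\AxisAligned^\nu$, Theorem~\ref{thm:simhull} to identify $\conv(\point^\nu)$ with the convex hull of the image of $\AxisAligned^\nu$ under $(x,t)\mapsto(x,M(t))$, and Lemmas~\ref{lemma:graphconvergence} and~\ref{lemma:convergence} for the limit. Your extra details (splitting $t=f(x)$ into two Lipschitz inequalities, treating each coordinate as its own block, and using uniform continuity on the compact bounding box that contains every $\AxisAligned^\nu$ rather than only on $\bar C$) are sound, and the last one repairs a small looseness in the paper's own proof of Lemma~\ref{lemma:graphconvergence}.
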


\begin{proof}
    By Lemma~\ref{lemma:axisalign}, it follows directly that $H^\nu$ is an axis-aligned outer-approximation of $\graph(f)$. This justifies constructing $\point^\nu$ using Equation~\eqref{eq:sequenceQ}, as described in the statement of the result. Now, we show that $\conv(\point^\nu)$ is a polyhedral relaxation of $\G$. By Theorem~\ref{thm:simhull}, $\conv(\point^\nu)$ is the convex hull of:
    \begin{equation*}
        \Rlx^\nu := \bigl\{(x, \mu) \bigm| \bigm (x,t) \in \AxisAligned^\nu,\ \mu_i = M_i(t) \for i \in [m]   \bigr\}.
    \end{equation*}
    Since $\AxisAligned^\nu$ outer-approximates $\graph(f)$, $\Rlx^\nu$ is a relaxation of $\G$ and, therefore, $\conv(\point^\nu) = \conv(\Rlx^\nu) \supseteq \G$. By Lemma~\ref{lemma:axisalign}, Algorithm~\ref{alg:axis-aligned} terminates finitely. Therefore, $\AxisAligned^\nu$ has finitely many boxes and $\corner(\AxisAligned^\nu)$ is finitely sized. This shows that $\conv(\point^\nu)$ is a polyhedral outer-approximation of $\G$.
    
     We now show that $\conv(\point^\nu)$ converges to $\conv(\G)$. By Lemma~\ref{lemma:axisalign}, we have $\AxisAligned^\nu \rightarrow \graph(f)$. Since a point in $\graph(f) = \{(x,t)\mid x\in D, t_i=f_i(x)\}$ is mapped to $\G$ by computing $\mu_i=\Mul_i(t)$ for $i\in [m]$ and projecting out $t$, it is a continuous map. The same map also transforms $\AxisAligned^\nu$ to $\Rlx^\nu$. Therefore, by Lemma~\ref{lemma:graphconvergence}, it follows that $\Rlx^\nu\rightarrow \G$. Then, by Lemma~\ref{lemma:convergence}, it follows that $\conv(\Rlx^\nu)\rightarrow \conv(\G)$. Since we have shown that $\conv(\point^\nu) = \conv(\Rlx^\nu)$, the result follows.
\end{proof}

Next, we consider a special case of the graph defined in~\eqref{eq:multCompositionSet} where the number of inner-functions is $n$ and for $i \in [n]$ $f_i(x) = x_i$. Given an axis-aligned approximation $\AxisAligned$ of the domain $D$, Corollary~\ref{simplest convex extension}  implies that  a polyhedral relaxation is given as the convex hull of 
\begin{equation}\label{eq:mul-rlx-points}
    \point(\AxisAligned) := \bigl\{(x, \mu) \bigm| x \in \corner( \AxisAligned), \mu_i = \Mul_i(x) \for i \in [m]  \bigr\}. 
\end{equation}
For this specific structure, we establish a refined convergence result that provides an upper bound on  the cardinality of $\point(\AxisAligned)$ required to achieve an $\epsilon$-approximation of the  graph. A consequence of this result is that the convergence rate is linear for bivariate functions. 
 For a set of finite number of points $S$, let $\card(S)$ denote its cardinality. 
\begin{proposition}\label{points counting}
Let $D$ be  a full dimensional bounded set in $\R^n$, and consider a vector of  functions $\Mul:D \to \R^m$ defined as $\Mul(x) = (\Mul_1(x),\Mul_2(x),\cdots, \Mul_m(x))$, where $\Mul_i(x)$ is a multilinear function and has a Lipschitz constant of $L$. Let $w$ be the diameter of $D$.  Then, for any given $\epsilon >0$, there exists an axis-aligned outer-approximation $\AxisAligned$ of $D$ such that 
$d_\infty\bigl(\conv(\point(\AxisAligned)) , \conv(\graph(\Mul)) \bigr) \le \epsilon$, where $\point(\AxisAligned)$ is the set defined in \eqref{eq:mul-rlx-points} with $\card(\point(\AxisAligned)) \le \bigl(\frac{w(Lm+1)\sqrt{n-1}}{\epsilon} + 1\bigr)^{n-1}$. 
\end{proposition}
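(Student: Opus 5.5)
The plan is to build $\AxisAligned$ explicitly from a uniform grid, control the Hausdorff distance through a Lipschitz estimate, and then count corner points column by column. First, enclose $D$ in an axis-parallel box of side at most $w$; this is possible because $D$ has diameter $w$. Fix a mesh size $h>0$, to be chosen later, and subdivide the box into cubes of side $h$. Let $\AxisAligned$ be the union of the closed cells that meet $D$. This is exactly the grid-cover construction, so $\AxisAligned$ is axis-aligned and $D\subseteq\AxisAligned$. By Corollary~\ref{simplest convex extension}, the set $\conv(\point(\AxisAligned))$ equals the simultaneous hull of $\{\Mul_i\}$ over $\AxisAligned$, that is, $\conv\{(x,\Mul(x))\mid x\in\AxisAligned\}$. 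So it suffices to bound the Hausdorff distance between the graph of $\Mul$ over $\AxisAligned$ and the graph over $D$, and then pass to convex hulls using Lemma~\ref{lemma:convergence}.

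\emph{Distance estimate.} Every $x\in\AxisAligned$ lies in a cell that meets $D$, so some $y\in D$ is within the cell diameter of $x$. For such a pair,
\[
\|(x,\Mul(x))-(y,\Mul(y))\|\le \|x-y\|+\sum_{i}|\Mul_i(x)-\Mul_i(y)|\le (Lm+1)\|x-y\|.
\]
The reverse inclusion is immediate because $D\subseteq\AxisAligned$. Hence the graph distance is at most $(Lm+1)$ times the cell diameter. I aim for the cell diameter to be effectively $h\sqrt{n-1}$ rather than $h\sqrt n$. To get this, I would refine only the first $n-1$ coordinates. The region is then taken as a union of columns of the form (an $(n-1)$-dimensional cell of side $h$) $\times$ (an interval in $x_n$ fitted to the part of $D$ above that cell). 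If that fitting cannot keep every point within $h\sqrt{n-1}$ of $D$ in general, I would fall back to $h\sqrt n$ with full cubes and accept a slightly weaker constant. Choosing $h=\epsilon/\bigl((Lm+1)\sqrt{n-1}\bigr)$ then gives $d_\infty\le\epsilon$ by Lemma~\ref{lemma:convergence}.

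\emph{Counting corners.} By Definition~\ref{defn:generalCorners}, a corner point $v$ must in particular have $v_n$ extreme in the convex hull of the slice of $\AxisAligned$ along the $x_n$-line through $(v_1,\dots,v_{n-1})$. It must also have $v_1,\dots,v_{n-1}$ at grid values, since $\corner(\AxisAligned)\subseteq\disc(\AxisAligned)$. The number of grid values per coordinate is at most $w/h+1$, so the number of admissible $x_n$-lines is at most $(w/h+1)^{n-1}$. On each such line the convex hull of the slice is an interval, which contributes at most two extreme values. Substituting $h$ gives a bound of the form $\bigl(\tfrac{w(Lm+1)\sqrt{n-1}}{\epsilon}+1\bigr)^{n-1}$, up to the constant from the two endpoints per line.

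\emph{Main obstacle.} The hard step is this count. I need to remove the factor $2$ coming from two endpoints per line, and simultaneously to justify the $\sqrt{n-1}$ (rather than $\sqrt n$) diameter. My first attempt would be to shift the grid by half a cell in the first $n-1$ coordinates, so that endpoints on adjacent columns are shared and each grid line is charged only once. If that fails, I would absorb the factor into the choice of $h$, noting that for $n=2$ the resulting bound is linear in $1/\epsilon$, which is the claimed linear rate.
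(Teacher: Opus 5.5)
Your proposal has a genuine gap in the distance estimate. You bound $d_\infty$ by comparing the graph of $\Mul$ over all of $\AxisAligned$ with the graph over $D$ and then applying Lemma~\ref{lemma:convergence}. That requires every point of $\AxisAligned$ to be near $D$. With full cubes the argument is correct, but it only gives the constant $\sqrt{n}$.

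With columns $H'\times[\ell^L(H'),\ell^U(H')]$ the argument breaks down for nonconvex $D$. Here $\ell^L(H')$ and $\ell^U(H')$ are the least and greatest values of $x_n$ over points of $D$ lying above the $(n-1)$-dimensional cell $H'$. If the fibre of $D$ over $H'$ has a gap (think of an annulus), points in the middle of the column are far from $D$.

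The missing idea, which is how the paper gets $\sqrt{n-1}$, is that only the generators need to be checked. One inclusion, $\conv(\graph(\Mul))\subseteq\conv(\point(\AxisAligned))$, comes from Corollary~\ref{simplest convex extension}. For the other, $\conv(\graph(\Mul))+\epsilon B_{n+m}$ is convex, so it suffices that each $(v,\Mul(v))$ with $v\in\corner(\AxisAligned)$ lies within $\epsilon$ of $\graph(\Mul)$.

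For the column region, a corner $v=(v',v_n)$ has $v'$ a vertex of some cell. Also $v_n$ must be an endpoint of the convex hull of the $x_n$-slice through $v'$, so $v_n=\ell^L(H')$ or $v_n=\ell^U(H')$ for some cell $H'\ni v'$. That value is attained (or approached, if $D$ is not closed) by some $x\in D$ with $(x_1,\ldots,x_{n-1})\in H'$. Then $x_n=v_n$, so $\|x-v\|\le h\sqrt{n-1}$ and $\|(x,\Mul(x))-(v,\Mul(v))\|\le(Lm+1)h\sqrt{n-1}=\epsilon$. Points deep inside a column never have to be close to $D$; convexity takes care of them.

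Your ``main obstacle'' cannot be overcome, by a half-cell shift or any other device, because the factor $2$ is genuinely needed. If $D$ is full-dimensional, every axis-aligned $\AxisAligned\supseteq D$ has at least two corner points. The lexicographic maximum (maximize $x_1$, then $x_2$ subject to that, and so on) is extreme in every coordinate slice through it. The same holds for the lexicographic minimum, and the two points differ.

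Yet the stated bound tends to $1$ as $\epsilon\to\infty$. For $n=2$ it is already below $2$ once $\epsilon>w(Lm+1)$, and for $n=1$ it equals $1$. Absorbing the $2$ into $h$ also fails: reducing the count forces $h>\epsilon/\bigl((Lm+1)\sqrt{n-1}\bigr)$, and then the error exceeds $\epsilon$.

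The paper's proof reaches exactly your count, $\card(E)\le 2\card(E')\le 2(w/d+1)^{n-1}$, and its final ``equality'' simply drops the $2$. So your counting step is the right one: at most two extreme values of $x_n$ on each of at most $(w/h+1)^{n-1}$ lines. Combined with the generator-only distance bound, it proves the proposition with an extra leading factor $2$. It also needs $w/h$ rounded up to an integer, a detail both you and the paper gloss over.
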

\begin{proof}
 The proof constructs the desired axis-aligned approximation $\AxisAligned$. Towards this end, we define $G$ to be a regular grid over $\R^n$ with mesh size $d = \frac{\epsilon}{(Lm+1)\sqrt{n-1}}$. Let $D' = \proj_{(x_1,\ldots,x_{n-1})} (D)$ and $G' = \proj_{(x_1,\ldots,x_{n-1})} (G)$ be the projections on the first $n-1$ coordinates. Observe that $G'$ is also a regular grid in $\R^{n-1}$. Let $\AxisAligned'$ be a minimal axis-aligned approximation of $D'$ using the grid cells induced by $G'$, \textit{i.e.}, if any grid cell is removed from $\AxisAligned'$ then the resulting region does not cover $D'$. For each $H' \in \AxisAligned'$, we derive two endpoints for the $n^{\text{th}}$ coordinate:
 \[
 \ell^L(H'):=\min\{x_n \mid (x_1, \ldots, x_{n-1} ) \in H',\ x \in D \} \text{ and } \ell^U(H'):=\max\{x_n \mid (x_1, \ldots, x_{n-1} ) \in H',\ x \in D \},
 \]
 and lift $H'$ into $\R^n$ as follows $ L(H') := \bigl\{ x \bigm| (x_1, \ldots, x_{n-1} ) \in H', \ell^L(H') \leq x_n \leq \ell^U(H') \bigr\}$. Now, we obtain an axis-aligned region by taking a union of lifted hypercubes, that is, 
 \[
 \AxisAligned: = \bigcup_{H' \in \AxisAligned'} L(H'). 
 \]
To see that $\AxisAligned$ is an outer-approximation of $D$, we consider a point $x\in D$. Since $\AxisAligned'$ is an axis-aligned outer-approximation of $D'$, there exists $H' \in \AxisAligned'$ such that $(x_1, \ldots, x_{n-1}) \in H'$. Therefore, $\ell^L(H') \leq x_n \leq \ell^U(H')$. This shows that $x$ is contained in $L(H')$ and hence in $\AxisAligned$. In other words, $\AxisAligned$ is indeed an axis-aligned outer-approximation of $D$. 

Next, we construct a superset of $\corner(\AxisAligned)$.  Let $E':=\cup_{H' \in \AxisAligned'}\vertex(H')$. Then, $E'$ contains the projection of $\corner(\AxisAligned)$ onto the first $n-1$ coordinates since
\[
\proj_{(x_1, \ldots,x_{n-1})}\corner(\AxisAligned) \subseteq \proj_{(x_1, \ldots,x_{n-1})} \cup_{H' \in \AxisAligned'} \vertex(L(H'))  = \cup_{H'\in \AxisAligned'} \vertex(H') = E'. 
\]
Now, we lift $E'$ into $\R^n$ to obtain a superset $E$ of $\corner(\AxisAligned)$. For each point $v' \in E'$, we consider a set of candidate values for the $n^{\text{th}}$ coordinate: $C(v'):= \bigcup_{H' \in \AxisAligned \mid v' \in H'} \bigl\{\ell^L(H'), \ell^U(H')\bigr\}$.
Clearly, $\bigl\{(v',x_n) \bigm| v' \in E',\ x_n \in C(v') \bigr\} = \cup_{H \in \AxisAligned}\vertex(H)$, which contains corner points of $\AxisAligned$. Now, some of the non-extreme elements in $C(v')$ can be removed to obtain a refined  superset as follows: 
\[
E:=\Bigl\{(v',x_n) \Bigm| v' \in E',\ x_n \in \vertex\bigl(\conv(C(v'))\bigr)\Bigr\}.
\]
We will now show that $\card(E) \leq \bigl(\frac{w(Lm+1)\sqrt{n-1}}{\epsilon} + 1\bigr)^{n-1}$ which implies the same bound for $\card(\point(\AxisAligned))$ since $\corner(\AxisAligned) \subseteq E$. Since the diameter of $D$ is $w$, the diameter of $D'$ is upper bounded by $w$. We start the grid $G$ at the coordinate-wise minimum on all coordinates in $D$. Then,
\[
\card(\point(\AxisAligned)) = \card\bigl(\corner(\AxisAligned)\bigr) \leq \card(E) = 2\card(E') \le 2\Bigl(\frac{w}{d}+1\Bigr)^{n-1} = \Bigl(\frac{w(Lm+1)\sqrt{n-1}}{\epsilon} + 1\Bigr)^{n-1},
\]
 where the first inequality is because $E\supseteq \corner(\AxisAligned)$, the second inequality is because each point in $E'$ was lifted to two points in $E$, the first inequality is because $\lceil\frac{w}{d}\rceil$ suffices to cover the width $w$ at mesh-size $d$, and the last equality is by substituting $d$ based on our choice.

Finally, we show that the approximation error is bounded by $\epsilon$, that is, $d_\infty\bigl(\conv(\point(\AxisAligned)), \conv(\graph(\Mul)) \bigr) \le \epsilon$. Since $\AxisAligned \supseteq D$, Corollary~\ref{simplest convex extension} implies that  $\graph(\Mul) \subseteq \conv(\point(\AxisAligned))$, and thus $\conv(\graph(\Mul)) \subseteq  \conv(\point(\AxisAligned))$. Therefore, it suffices to show that $\conv(\point(\AxisAligned)) \subseteq \conv(\graph(\Mul)) + \epsilon B_{n+m}$, where $B_{n+m}$ is a unit ball in $\R^{n+m}$. Moreover, since $\conv(\graph(M)) + \epsilon B_{n+m}$ is convex, it only remains to show that $\point(\AxisAligned)\subseteq \conv(\graph(M)) + \epsilon B_{n+m}$. Pick an arbitrary point $(v',x_n,\mu)\in \point(\AxisAligned)$, which, by definition, is such that $v'\in E'$, $x_n\in \vertex(\conv(C(v')))$ and $\mu = M(v',x_n)$. Observe that, by the definition of $C(v')$, there exists a $H' \in \AxisAligned$ and $(x_1,\ldots,x_{n-1})\in H'$ such that $(x_1, \ldots,x_n) \in D$. Then, $(x,M(x))\in \graph(\Mul)$. The following shows that $(x,M(x))$ is within an $\epsilon$ distance of $(v',x_n,\mu)$:
   \begin{equation*}
       \begin{split}
       &\|(x,M(x)) - (v',x_n,M(v',x_n)\| \le \|x-(v',x_n)\| + \sum_{i=1}^m L\|x - (v',x_n)\| =\\
       &(Lm+1)\|x-(v',x_n)\|  \le (Lm+1) \sqrt{\max\nolimits_{i=1}^{n-1} (v'_i - x_i)^2} \le (Lm+1)d\sqrt{n-1} = \epsilon,
       \end{split}
   \end{equation*}
   where the first inequality is by triangle inequality and the Lipschitz constant of $M_i$ for $i\in [m]$, the second inequality simply overestimates each term, and the third inequality is because the mesh size of $\AxisAligned'$ is $d$ implying that $|v_i-x_i|\le d$, and the last equality is by substituting $d=\frac{\epsilon}{(Lm+1)\sqrt{n-1}}$.
\end{proof}

\section{Implementation}\label{sec:implementation}
This section details the implementation of our proposed relaxation schemes for constructing a polyhedral relaxation for mixed-integer nonlinear programs (MINLP) of the form:
\begin{equation}
\label{prob:general_optimization}
\begin{aligned}
    \min_{x \in \R^n} \quad &f(x)\\
    &g(x)\leq 0 \\
    & x \in \mathcal{X} \text{ and } x_i \in \mathbb{Z} \, \for  i\in I,
\end{aligned}
\end{equation}
where $f: \R^n \to \R$ and $g:\R^n \to \R^m$ are given functions, $\mathcal{X} \subseteq 
\R^n$ is a bounded polyhedral set, and $I \subseteq [n]$ denotes the index-set of the integer variables. We restrict our attention to  factorable functions $f$ and $g$, \textit{i.e.}, functions that can be recursively expressed as finite compositions of a list of binary operators $\{+, *, /\}$ and  univariate functions~\cite{mccormick1976computability}. 

\subsection{Base Relaxation}\label{BASE}
To benchmark our relaxation scheme against standard techniques, we first construct a baseline (Base) that is derived from factorable programming (FP), augmented with specific enhancements. In FP, each nonlinear function is represented as an expression tree. Then, an auxiliary variable is introduced for each internal node (representing an operator), and its domain is constrained by relaxing the operator over bounds derived independently for its operands. For more details, we refer to Algorithm~\ref{BASE} in Appendix~\ref{Factorable relaxation}  and the standard literature~\cite{mccormick1976computability,tawarmalani2004global}.

However, our baseline (Base) incorporates several enhancements to strengthen FP relaxations, aligning it with state-of-the-art techniques. Specifically, the implementation employs expression simplification, expansion, and matching. First, the constructor parses each expression by flattening nested additions/subtractions and multiplications/divisions, then recursively distributes multiplication over addition. Complex nonlinear expressions are decomposed into simpler bivariate forms via auxiliary variables. Second, redundant auxiliary variables for identical sub-expressions are eliminated in favor of a single representative variable. Third, repeated products (e.g., $x \times x \times x$) are consolidated into power terms ($x^3$), with auxiliary variables introduced for univariate functions and products. Finally, additional auxiliary variables simplify expressions into affine combinations of univariate functions, followed by a final round of expression matching.

The baseline also deduces tighter bounds on all variables in the transformed model via forward and reverse bound propagation. The process infers bounds for each variable using the current bounds of the others, employing interval arithmetic~\cite{moore2009introduction} as detailed in Appendix~\ref{variablebound}. We further apply range reduction using a local  solution (OBTT)~\cite{bestuzheva2023global} and, after solving, perform duality-based range reduction~\cite{tawarmalani2013convexification}. Finally, we construct convex hulls for the graphs of univariate functions (monomials, exponentials, and logarithms) over their interval bounds by exploiting their specific properties~\cite{tawarmalani2013convexification}.

To avoid excessive complexity in the baseline, we have chosen to omit several prevalent techniques. Specifically, our relaxations remain purely polyhedral excluding inequalities involving Lorentz cones~\cite{ben2021lectures}, positive-semidefinite matrices~\cite{lasserre2001explicit,ben2021lectures}, or completely positive matrices~\cite{burer2009copositive}. Integer variables are relaxed to continuous domains without incorporating integer-programming cuts~\cite{conforti2014integer}. Additionally, we do not implement convexity detection~\cite{bonami2008algorithmic,kronqvist2019review, coey2020outer}, perspective reformulations~\cite{gunluk2010perspective}, and symmetry reduction~\cite{hojny2024detecting}, or embed the relaxations within a branch-and-bound framework~\cite{land1960automatic,ryoo1996branch,tawarmalani2005polyhedral,belotti2009branching}. Consequently, while our baseline is robust for general factorable structures, it may be less competitive on problems where these specialized techniques are critical. This limitation of our baseline motivates our comparison against the commercial solver Gurobi, which integrates many of these advanced features.

Since our Voxelization Relaxation (VR) differs from the baseline primarily in its treatment of multiplication nodes, this section details VR's specific approach. We first describe how VR handles multiplication nodes within expression trees, and then present two distinct methods for generating voxelizations. To avoid excessive implementation complexity while allowing assessment of the potential benefits and promise of our proposed framework, we selectively implement a subset of the proposed features.

\subsection{Treatment of Multiplication Nodes}
A key distinction between VR and Base lies in their treatment of multiplication operators. While Base relaxes the multiplication operator by finding bounds for each of the operands, VR performs a deeper structural analysis, to identify if operands correspond to univariate functions, treating such children as functions rather than scalars. Figure~\ref{fig:expression different treatment} illustrates this difference. Base relaxes the product $t_0 = t_1\times t_2$ using bounds for $t_1$ and $t_2$, whereas VR recognizes the structure $t_0 = t_1 \times \exp(t_5)$ and relaxes it the graph in $(t_0,t_1,t_5)$ space exploiting the function structure of $\exp(t_5)$.

\begin{figure}
    \scriptsize
    \begin{center}
    \begin{forest}
      for tree={
        align=center, grow'=south,
        child anchor=north, parent anchor=south,
        l sep=6pt, s sep=6pt,
        edge={thick}, %
        draw, rounded corners, inner sep=2pt, outer sep=0pt
      }
      [%
        {$t_0:\times$}
          [%
            {$t_2:\exp$}
              [{$t_5$}]
          ]
          [%
            {$t_1:\times$}
              [{$t_3$}]
              [{$t_4$}]
          ]
      ]
    \end{forest}
    \end{center}
    \caption{Partial expression tree}
    \label{fig:expression different treatment}
\end{figure}
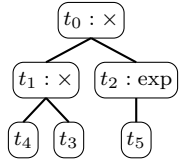

Furthermore, unlike Base, which ignores the linking constraints and relaxes products over rectangular domains, VR constructs an axis-aligned outer approximation of the feasible region. It then employs computational geometry tools (Section~\ref{sec:prod-over-nonbox}) to relax products over this domain. Although our methodology extends to high dimensions (see Section~\ref{sec:nD}), our implementation restricts itself to low-dimensional spaces, utilizing voxelization to construct the outer approximation~\cite{sramek2002alias} (see Figure~\ref{fig:banana_voxel_supervoxel}).

\begin{figure}[ht]
\centering

\begin{subfigure}[t]{0.3\linewidth}
\centering
\includegraphics{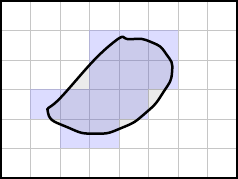}
\caption{Approximation via Voxelization}
\end{subfigure}
\hfill
\begin{subfigure}[t]{0.3\linewidth}
\centering
\includegraphics{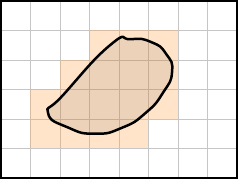}
\caption{Outerapproximation via Voxelization}
\end{subfigure}
\hfill
\begin{subfigure}[t]{0.3\linewidth}
\centering
\includegraphics{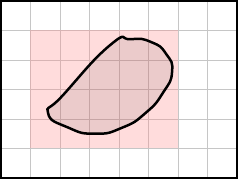}
\caption{Bounding Box}
\end{subfigure}
\caption{A comparison among approximation and outer approximation produced by voxelization and bounding box}
\label{fig:banana_voxel_supervoxel}
\end{figure}

This procedure is formally described in Algorithm~\ref{alg:multiplication node}:
\begin{algorithm}[ht]
\caption{Handling Multiplication Nodes}
\label{alg:multiplication node}
\begin{algorithmic}[1]
\Require Expression tree $T$, multiplication node $t_0 \in T$, inequality system $I$, number of breakpoints $N_b$
\For{each child node $t_i$ of $t_0$}
    \If{$t_i$ represents a nonlinear univariate function}
        \State Let $t_i^c$ be the input variable such that $t_i = f_i(t_i^c)$.
        \State Construct piecewise linear under- and over-estimators, $u_i$ and $w_i$, for $f_i$ on its domain using $N_b$ uniformly spaced breakpoints $X_i$.
        \State Set $v_i \gets t_i^c$.
    \Else
        \State Set $u_i(t_i) = w_i(t_i) = t_i$. Let $X_i$ be the set of bounds for $t_i$.
        \State Set $v_i \gets t_i$.
    \EndIf
\EndFor
\State Construct an axis-aligned outer approximation $A$ of the domain of $(v_1, v_2)$ using voxelization and constraints $I$.
\State Define a grid cover $P$ of $A$ induced by the Cartesian product $X_1 \times X_2$.
\State Compute the convex hull of the set $R$ using \texttt{QHull}, where:
\begin{align*}
    R = \Bigl\{(v_1, v_2, t_0) \;\Big|\; &t_0 \in \{u_1(v_1)u_2(v_2), u_1(v_1)w_2(v_2), w_1(v_1)u_2(v_2), w_1(v_1)w_2(v_2)\}, \\
    &(v_1, v_2) \in \bigcup_{p \in P} p \Bigr\}.
\end{align*}
\State Add the linear constraints defining $\conv(R)$ to $I$.
\end{algorithmic}
\end{algorithm}

In addition, fractions such as $t_0 = \frac{t_1}{t_2}$ are rewritten as $t_1 = t_0\cdot t_2$ and processed by Algorithm~\ref{alg:multiplication node}.

\subsection{Voxelization via Approximated Projection}\label{sec:voxelProject}
We describe our first voxelization algorithm, which computes an approximate projection of the feasible set onto the subspace of the target variables before performing voxelization. This approach avoids the computational expense of exact high-dimensional projections. 

The approximation follows the method described in~\cite{kamenev1996algorithm,lotov2008modified}. Alternative ways for constructing projections were described in \cite{muller2020using}. These were used to identify inequalities for $x_1x_2$ over a polytope as given in \cite{locatelli2018convex}. Given a high-dimensional polytope $P$, we first establish a bounding box for the target variables $(x,y)$ by optimizing $P$ along the $x$ and $y$ directions identifying the optimal solutions. The convex hull of these projected supporting points forms an inner approximation of $\proj_{(x,y)} P$. We then iteratively refine this approximation: computing facet normals of the current inner hull and identifying the facet with maximal distance to the outer boundary of the projection to find an additional supporting point. This process repeats until the maximum distance falls below a tolerance $\epsilon$ or an iteration limit $N_{\text{max}}$ is reached. The resulting polytope, denoted $\AP(P,N_{\max},\epsilon,(x,y))$, serves as the outer-approximation of the projection.

This two-dimensional polyhedral set is efficiently voxelized by discretizing the boundary. Let $V_i$, $i=1,\ldots,n$ be the vertices of $\AP(P, N_{\max},\epsilon, (x,y))$ sorted clockwise with $V_{n+1} = V_1$. The voxelization is the union of rectangles generated along each segment $(V_i, V_{i+1})$. The precision of the approximation is controlled by the number of voxels per segment, $N_V$. We summarize this method in Algorithm~\ref{alg:approx_projection}.

\begin{algorithm}[ht]
\caption{Voxelization via Approximated Projection}
\label{alg:approx_projection}
\begin{algorithmic}[1]
\Require tolerance \(\varepsilon > 0\); maximum number of iterations $N_{\max}$; number of voxels each segment $N_V$; system of inequalities $I$; target variables $(x,y)$
\State Let $I$ be the linear inequalities defining polytope $P$
\State $\Voxels\gets \emptyset$
\State $P_{out}\gets \AP(P,N_{max},\epsilon,(x,y))$
\State $V\gets \text{vertices of } P_{out}$
\State Sort $V$ clockwise; append $V_1$ to $V$ to close the loop
\For{$i = 1$ to $\lvert V\rvert - 1$}
    \For{$j = 0$ to $N_V-1$}
        \State Define rectangle with diagonal corners:
        \[ \left(V_i + \frac{j}{N_V}(V_{i+1}-V_i), \;\; V_i + \frac{j+1}{N_V}(V_{i+1}-V_i)\right) \]
        \State Add this rectangle to $\Voxels$
    \EndFor
\EndFor
\State Return $\Voxels$
\end{algorithmic}
\end{algorithm}

\begin{remark}
    The choice of a 2D projection is motivated by computational efficiency. For an $n$-dimensional target subspace, achieving an outer approximation within Hausdorff distance $\epsilon$ typically requires $\mathcal{O}(1/\epsilon^{n-1})$ linear programs (LPs). In the two-dimensional case, this bound improves to $\mathcal{O}(1/\sqrt{\epsilon})$~\cite{kamenev1996algorithm,lotov2008modified}. We exploit this efficiency to maintain high precision while avoiding the exponential cost associated with higher dimensions.
\end{remark}

\subsection{Voxelization via Quadtree}\label{sec:voxelQuadtree}
The second algorithm voxelizes a region by eliminating voxels exterior to the feasible set.
Given a bounding box $[x^L, x^U] \times [y^L, y^U]$ for the target variables $(x,y)$, we generate an initial grid of $p \times q$ voxels and iteratively prune those entirely outside the domain. The process leverages interval arithmetic and branch-and-bound techniques adapted from \cite{stolte1997robust,stolte2001novel}.

We employ a quadtree approach starting with the full bounding box. For every subgrid $S$ and constraint function $h_i(z)$, we compute the pair of interval bounds $[h_i]_S$ over $S$ using interval arithmetic techniques detailed in Appendix~\ref{variablebound}, incorporating global variable bounds for $z$ and local restrictions that $(x,y)\in S$. If all constraints are certified feasible, $\max([h_i]_S) \le 0$, the subgrid is included in the list of voxels. If any constraint is certified infeasible, $\max_i \min([h_i]_S) < 0$, the subgrid is pruned. Otherwise, if the grid resolution permits splitting, the subgrid is subdivided into four quadrants by choosing a centrally located grid point; if not, it is retained as an unresolved leaf. The procedure is summarized in Algorithm~\ref{VQ}.

\begin{algorithm}[t]
\caption{Quadtree Voxelization}\label{VQ}
\begin{algorithmic}[1]
\Require target components $(x,y)$ of variable vector $z$; bounding box of $z$; grid over $(x,y)$: $\{x_1,\dots,x_p\}\times\{y_1,\dots,y_q\}$; Inequalities system $I$
\State $\{h_i(z)\leq 0\}_{i\in [m]}\gets \text{inequalities in }I$
\State $Q \gets \{(1,p,\,1,q)\}$ \Comment{Queue of subgrids indexed by $(i_L,i_U,j_L,j_U)$}
\State $\text{Voxels} \gets \emptyset$
\While{$Q \neq \emptyset$}
  \State Pop $(i_L,i_U,j_L,j_U)$ from $Q$
  \State Define subgrid $S \gets [x_{i_L},x_{i_U}] \times [y_{j_L},y_{j_U}]$
  \For{$i=1,\dots,m$}
    \State Compute interval bounds $[h_i]_S$ of the constraint function $h_i(z)$ over $S$ using interval arithmetic (Appendix~\ref{variablebound}) with global variable bounds on $z$ and $(x,y)\in S$
  \EndFor
  \If{$\max_{1\le i\le m}\ \max([h_i]_S)\le 0$} \Comment{Certified inside}
     \State $\text{Voxels} \gets \text{Voxels}\cup\{\text{voxels in $S$}\}$
  \ElsIf{$\exists\, i \text{ with }\min([h_i]_S)>0$} \Comment{Certified outside}
     \State \textbf{skip} $S$
  \ElsIf{$(i_U-i_L\le 1)\ \lor\ (j_U-j_L\le 1)$} \Comment{Grid cannot be further split}
     \State $\text{Voxels} \gets \text{Voxels}\cup\{S\}$ \Comment{Unresolved leaf}
  \Else \Comment{grid-aligned split into four}
     \State $i_M \gets \big\lfloor\tfrac{i_L+i_U}{2}\big\rfloor$, \quad $j_M \gets \big\lfloor\tfrac{j_L+j_U}{2}\big\rfloor$
     \State Push $(i_L,i_M,\,j_L,j_M)$, $(i_M,i_U,\,j_L,j_M)$, $(i_L,i_M,\,j_M,j_U)$, and $(i_M,i_U,\,j_M,j_U)$ to $Q$
  \EndIf
\EndWhile
\State \Return $\text{Voxels}$
\end{algorithmic}
\end{algorithm}
Practically, $p$ and $q$ are set to $3$ with $(x_2,y_2)$ being the middle point of the bounding box of $(x,y)$. Though this method can handle nonlinearity and nonconvexity, the complexity increases substantially with $p$ and $q$.

\section{Numerical Results}\label{sec:numerical}
In this section, we present numerical results for the proposed voxelization-based relaxation (VR) on two problem classes. First, we evaluate VR against composite relaxations on polynomial optimization instances from~\cite{he2024mip}. Second, we compare VR with our baseline (Base) and Gurobi's root-node results, reporting the gap closed by each relaxation. We also investigate several algorithmic choices in VR's construction: the relative performance of the projection-based versus quadtree voxelization methods (Sections~\ref{sec:voxelProject} and \ref{sec:voxelQuadtree}), and the trade-off between relaxation quality and computational cost as voxelization is done at finer resolution. Experiments were conducted on a MacBook Pro (Apple M4 Pro, 12-core CPU, 24 GB RAM). Code was written in \texttt{Julia} v1.11~\cite{Julia-2017}, with relaxations modeled via \texttt{JuMP} v1.27.0~\cite{lubin2023jump}. We used \texttt{Ipopt} v1.10.6~\cite{wachter2006implementation} for upper bounds and \texttt{Gurobi} v11.7.5~\cite{gurobi} as the primary solver.

\subsection{Polynomial Optimization Problem}\label{sec:polynomial}
Consider polynomial optimization problems:
\begin{align*}
        \min\left\{\langle c,x\rangle+\langle d,y\rangle\mid Ax+By\leq b, x^L\leq x\leq x^U, y = (x^{\alpha_1},x^{\alpha_2},\cdots,x^{\alpha_m})\right\},\numberthis\label{poly opt}
\end{align*}
where $x\in\R^n$, $x^L,x^U\in \R^n$, $x^{\alpha_j} = \prod^n_{i = 1}x_i^{\alpha_{ji}}$, $\alpha_j = (\alpha_{j1},\cdots,\alpha_{jn})$, and parameters $c\in\R^n$, $d\in\R^m_+$, $A\in\R^{r\times n}$, $B\in\R_+^{r\times m}$, $b\in\R^q$. Problem sizes are denoted by $(n,m,r)$. For each tuple in \[\{(15,30,20),(25,50,20),(50,100,20),(100,200,20)\},\] we generate $100$ instances of Equation~\eqref{poly opt} using the procedure detailed in Appendix~\ref{app:generation}.

\begin{figure}[htbp]
\centering

\newcommand{\colw}{0.45\textwidth}
\newcommand{\vsep}{1ex}

\begin{tabular}{@{}c c@{}}

\begin{subfigure}[t]{\colw}\centering
\includegraphics[width=\linewidth]{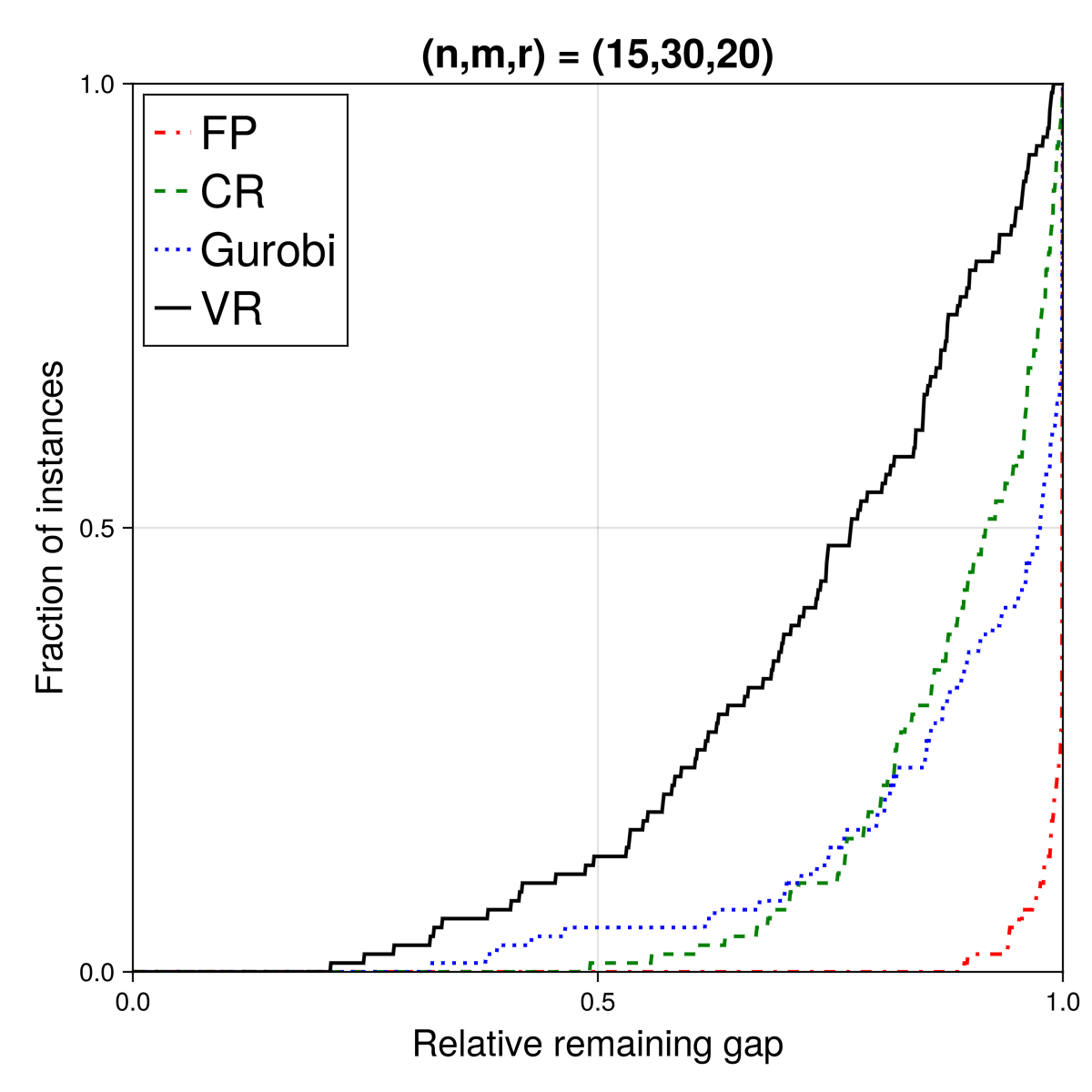}
\end{subfigure} &
\begin{subfigure}[t]{\colw}\centering
\includegraphics[width=\linewidth]{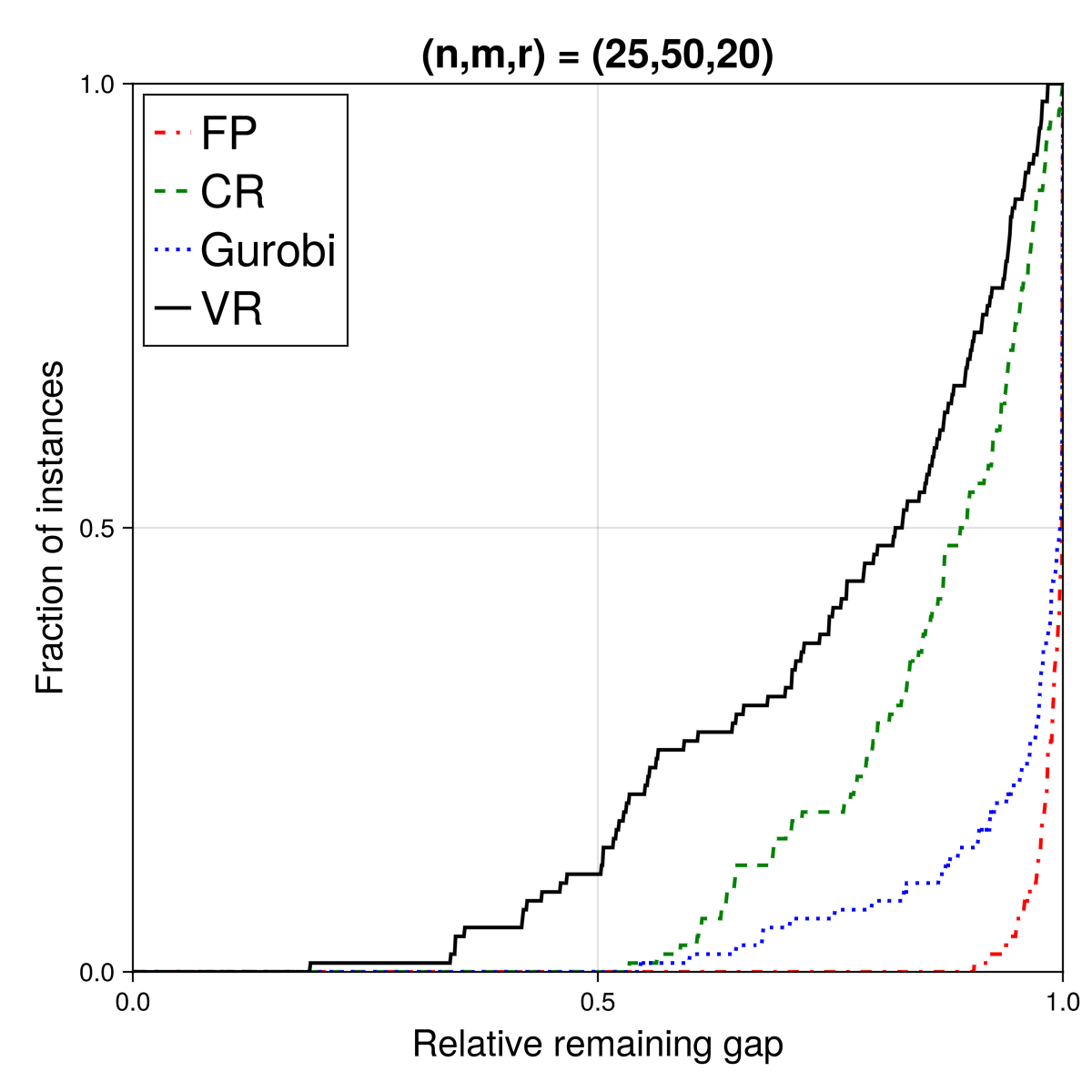}
\end{subfigure} \\[\vsep]

\begin{subfigure}[t]{\colw}\centering
\includegraphics[width=\linewidth]{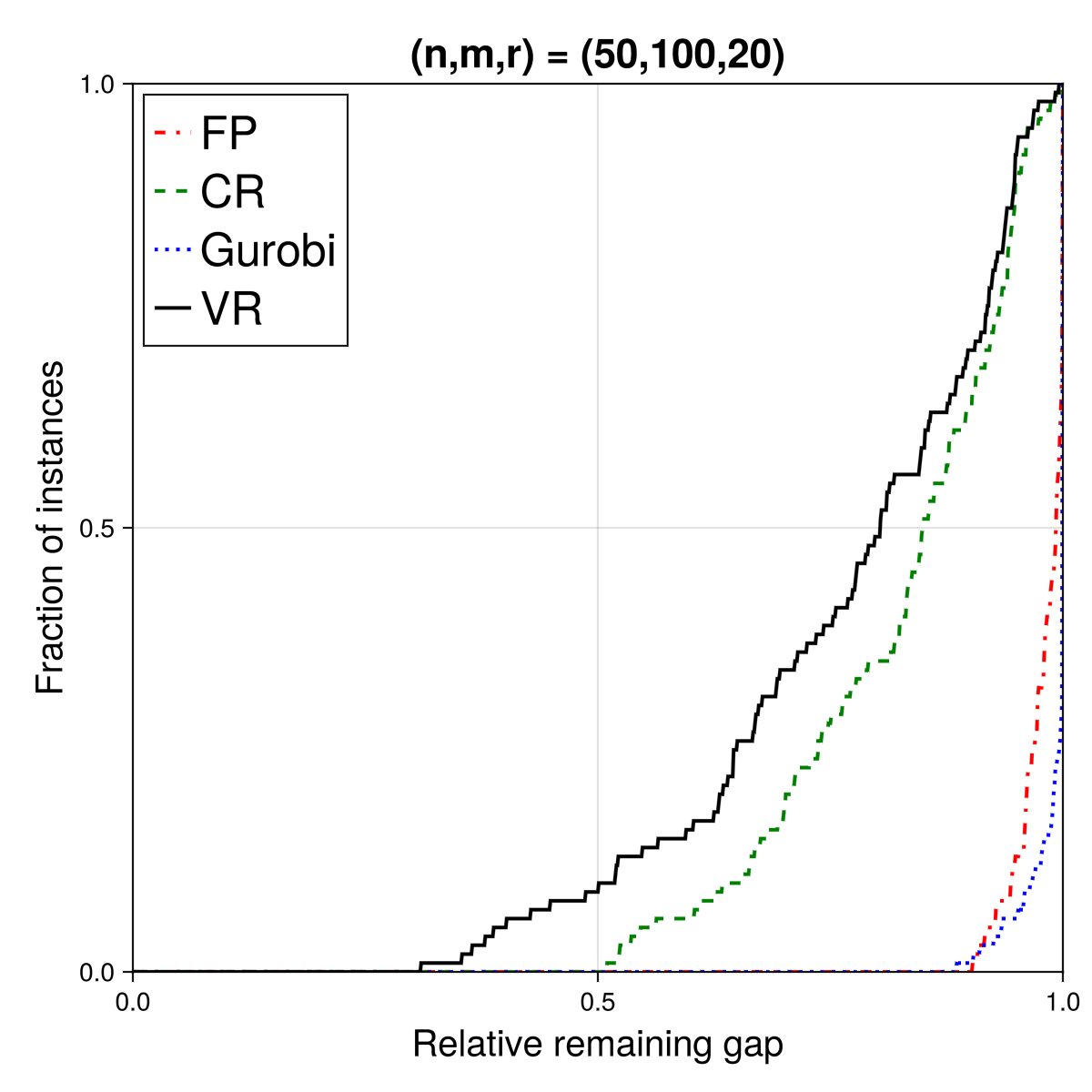}
\end{subfigure} &
\begin{subfigure}[t]{\colw}\centering
\includegraphics[width=\linewidth]{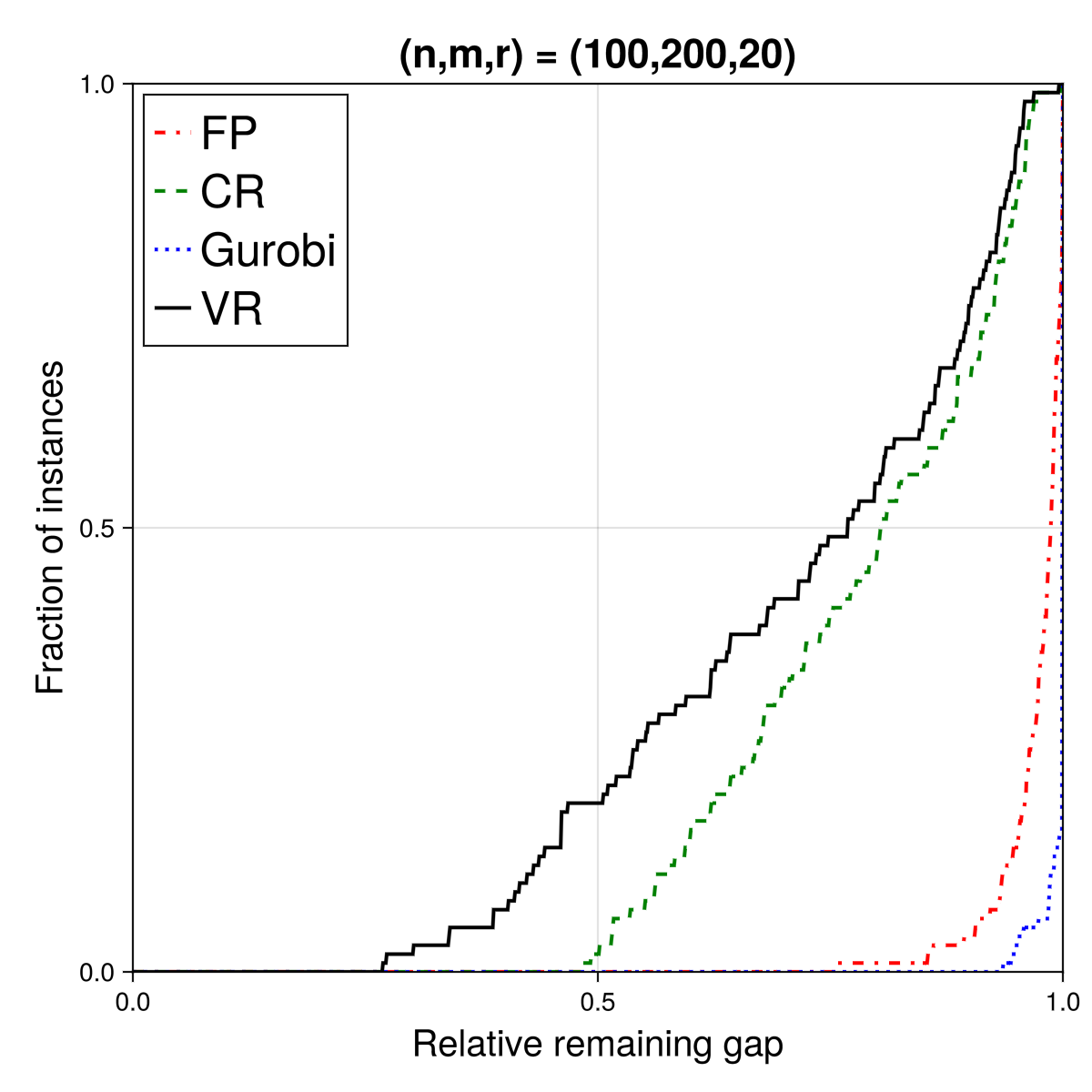}
\end{subfigure} 
\end{tabular}

\caption{Comparison with Relative Remaining Gap}
\label{fig:relaxation-cdfs-cr}
\end{figure}

We evaluate four relaxation settings:
\begin{itemize}
    \item \texttt{Gurobi}: Root-node relaxation ($\texttt{Nodelimit} = 0$) using default settings (cuts and range reduction enabled). Bounds are reported before the solver branches. %
    \item \texttt{FP}: Products are relaxed via standard McCormick envelopes~\cite{mccormick1976computability}.
    \item \texttt{CR}: Composite relaxation from~\cite{he2024mip} ($3$ underestimators and a maximum of $5$ separation calls.
    \item \texttt{VR}: Base relaxation scheme described in Section~\ref{alg:factorable relaxation} with multiplication nodes handled by Algorithm~\ref{alg:multiplication node} with $N_b = 5$, relaxing over cubes without voxelization.
\end{itemize}
\begin{table}[ht]
\begin{center}
\begin{tabular}{ | c |c  c  c  c | }
   \hline
 $(n,m,r)$  &  $\texttt{FP}$ & $\texttt{CR}$ & $\texttt{Gurobi}$ & $\texttt{VR}$\\ \hline
$(15,30,20)$&0.003& 0.168	&	0.021&0.118  \\ 
$(25,50,20)$&0.007& 0.181&	0.030&0.207 	\\ 
$(50,100,20)$&0.013 & 0.354 &0.072 &0.492 \\ 
$(100,200,20)$&0.033 & 0.679 &0.160 &1.113 \\ \hline
\end{tabular}
\end{center}
\caption{ Average construction and solution times of relaxations on $100$ instances (seconds).}~\label{table:computationtime}
\end{table}

\begin{table}[ht]
\begin{center}
\begin{tabular}{ | c |c  c  c | }
   \hline
 $(n,m,r)$  &  $\texttt{FP}$ & $\texttt{CR}$ &  $\texttt{VR}$\\ \hline
$(15,30,20)$&0.0019&0.0086&0.0086\\ 
$(25,50,20)$&0.0036&0.0188&0.0194\\ 
$(50,100,20)$&0.0097&0.0615&0.0645\\ 
$(100,200,20)$&0.0262&0.1746&0.1198\\ \hline
\end{tabular}
\end{center}
\caption{ Average solving times of relaxations on $100$ instances (seconds).}~\label{table:solvingtime}
\end{table}
We compare the strength using the relative remaining gap. For instance $p$, let $v_{p,i}$ be the lower bound from relaxation $i$ and $u_p$ the upper bound from \texttt{Ipopt}. Using $\min_i\{v_{p,i}\}$ as a reference, the relative gap for relaxation $i$ is:
\begin{equation}\label{relative remaining gap}r_{p,i} = \frac{u_p - v_{p,i}}{u_p - \min_{i'}\{v_{p,i'}\}}.\end{equation}
We define $\mu_i(\alpha)$, as the fraction of instances where $r_{p,i} \le \alpha$:
\begin{equation}\label{fraction rrg}
    \mu_i(\alpha) = \frac{1}{\lvert P\rvert}\lvert \{p \in P \mid r_{p,i} \leq \alpha\} \rvert.
\end{equation}
    
In Figure~\ref{fig:relaxation-cdfs-cr}, curves further left and higher indicate stronger relaxations; VR significantly outperforms the others. Table~\ref{table:computationtime} shows average solution times. For small instances, VR and CR costs are comparable. For large instances, VR improves relaxation quality at roughly double the computational cost. Note that these include the time to construct the relaxation. However, once constructed, solving time for VR is comparable (see Table~\ref{table:solvingtime}), which shows that VR relaxation takes 5-7 times the time to solve FP relaxation. Given that VR reduces the gap significantly and the time to solve the relaxation is reasonable encourages its use in branch-and-bound solvers, especially for hard instances.
\subsection{MINLPLib Instances}\label{sec:minlplib}

We evaluate VR on publicly available MINLPLib instances~\cite{bussieck2003minlplib}. Our default VR configuration is constructed as follows: 
\begin{enumerate} 
\item It builds upon the baseline relaxation (Algorithm~\ref{alg:factorable relaxation}), where multiplication nodes processed via Algorithm~\ref{alg:multiplication node} with enhancements described in Section~\ref{BASE}. Duality-based reduction is performed on relaxation solution~\cite{tawarmalani2004global}. 
\item \texttt{Ipopt} is used to find a local solution to invoke OBTT for range reduction~\cite{bestuzheva2023global}.
\item Univariate functions are approximated by piecewise linear functions with $N_b=9$ breakpoints. 
\item  Voxelization uses approximate projection method described in Algorithm~\ref{alg:approx_projection}, with $N_{\text{max}} = 5$ LPs per projection. 
\item Each segment is voxelized into $N_V = 5$ voxels. 
\item The algorithm executes a single iteration.
\end{enumerate}

Under this setting, VR demonstrates exceptional performance on several challenging MINLPLib instances, outperforming the best bounds from mainstream solvers reported in the library even after branch-and-bound. We first showcase these results, then report aggregate statistics comparing VR against the baseline relaxation and Gurobi's root-node bound. Finally, we conduct a sensitivity analysis on VR's hyperparameters to assess their impact on tightness and computational efficiency.

\begin{table}[ht]
\centering
\begin{tabular}{|c|ccccc|c|}
\hline
Instance & VR  &BARON &Gurobi & SCIP &Primal &VR Time(s)\\
\hline
cesam2log &-254.60  & missing &-600.32&-389.02&0.50&7.72\\
beuster &95237.22 & 103117.3 &105174.35&55563.78&116329.67&0.93\\
camshape200 &-4.89 & -4.97 &-4.79&-4.83&-4.27&32.44\\
camshape400 &-5.07 & -5.17 &-4.99&-5.04&-4.27&124.79\\
camshape800 &-5.22 & -5.27 &-5.12&-5.19&-4.27&528.97\\
transswitch0039r &25383.99 & 565.14 &77.98&27081.57&41866.12&77.53\\
\hline
\end{tabular}%
\caption{Showcase instances (based on MINLPLib.org)}\label{table: showcase instances}
\end{table}

Table~\ref{table: showcase instances} compares the optimal value of the VR relaxation against the best dual bounds reported in \texttt{minlplib} after branch-and-bound by BARON, Gurobi, and SCIP. These instances are nonconvex, arise from practical applications, and remain unsolved in MINLPLib.

We experiment with VR across $619$ MINLPLib instances, translated into \texttt{JuMP} models by \texttt{MINLPLib.jl}. Instances are selected if they satisfy the following restrictions:
\begin{enumerate}
    \item Have fewer than $5000$ variables and $500$ nonlinear constraints,
    \item Are not archived on \texttt{MINLPLib.org},
    \item Operators are limited to $\{+,-,\times,\div,\power,\log(\cdot),\exp(\cdot)\}$,
    \item \texttt{Julia} translation is correct\footnote{For some instances, bounds imposed on variables differed between translated and original models.},
    \item Nonlinear multiplications and/or divisions are present.
\end{enumerate}

For comparison, we convert all problems to minimization by negative the objective if necessary. To compare two relaxations $R_1$ and $R_2$ we use the relative gap closed (RCG). Let $p_1$ and $p_2$ be the optimal values obtained by $R_1$ and $R_2$, respectively, and $p^{\text{primal}}$ be the best known primal value. If $p_1\ge p_2$ (\textit{i.e.}, $R_1$ provides a better dual bound), we define:

\[\RCG(R_1, R_2) = \frac{p_1 - p_2}{p^{\text{primal}} - p_2}. \]

Table~\ref{minlp percentage} reports the percentage of instances where Base and VR—achieve:
\begin{enumerate}
    \item $\geq 1\%$ improvement over Gurobi's root-node ($\RCG(\cdot,\texttt{Gurobi})$),
    \item $\geq 1\%$ inferior relative to Gurobi root-node ($\RCG(\texttt{Gurobi},\cdot)$),
    \item Negligible performance difference.
\end{enumerate}
\begin{table}[ht]
\begin{center}
\begin{tabular}{ | c |c  c c | }
   \hline
 Relaxation  &  $\geq 1\%$ improvement & $\geq 1\%$ inferior & negligible difference \\ \hline
\texttt{Base} &10.96& 26.77	&62.27\\ 
\texttt{VR}&29.19 & 19.35&51.46 \\ \hline
\end{tabular}
\end{center}
\caption{Relative Closed Gap w.r.t Gurobi Root-Node}~\label{minlp percentage}
\end{table}

As detailed in Section~\ref{BASE}, the Base relaxation incorporates several enhancements into factorable programming like range reduction, expression simplification, and expression match. We use it a a benchmark to isolate VR's algorithmic contributions.
While the Base relaxation outperforms Gurobi on over $10\%$ of the instances, it underperforms on 26.77\% of instances, making them incomparable benchmarks. Notably, VR improves upon Gurobi by at least 1\% on 29.19\% of instances. Many cases where Gurobi excels involve latent structures where convexity identification, coefficient reduction, and integer-programming cuts offer opportunities, which our current implementation does not detect.

We compare VR and Gurobi using the proportion of instances $\mu(\alpha)$ (Equation~\eqref{fraction rrg}) where the relative remaining gap~(Equation~\eqref{relative remaining gap}) is at least $\alpha$ (Figure~\ref{Relative Remaining Gap Distribution MINLPLib}). In cumulative distribution plots, curves higher and to the left indicate stronger relaxations. VR dominates Gurobi when $\alpha\ge 31\%$, indicating superior performance on problems where significant gaps remain. Gurobi's superiority is largely confined to cases with extremely small gaps (near-solved instances) or problems where VR fails to derive reasonable bounds due to missing variable bounds in preprocessing. Both categories often include special structures that we conjecture are detected by Gurobi.

To demonstrate that VR's improvements stem from unique techniques, we consider the best bound obtained by either solver (green dashed line). Since this line exceeds Gurobi's performance by a significant margin, we conclude that VR's techniques significantly enhance the standalone root-node relaxation in the solver.

\begin{figure}[htbp]
\centering
\includegraphics[scale=0.3]{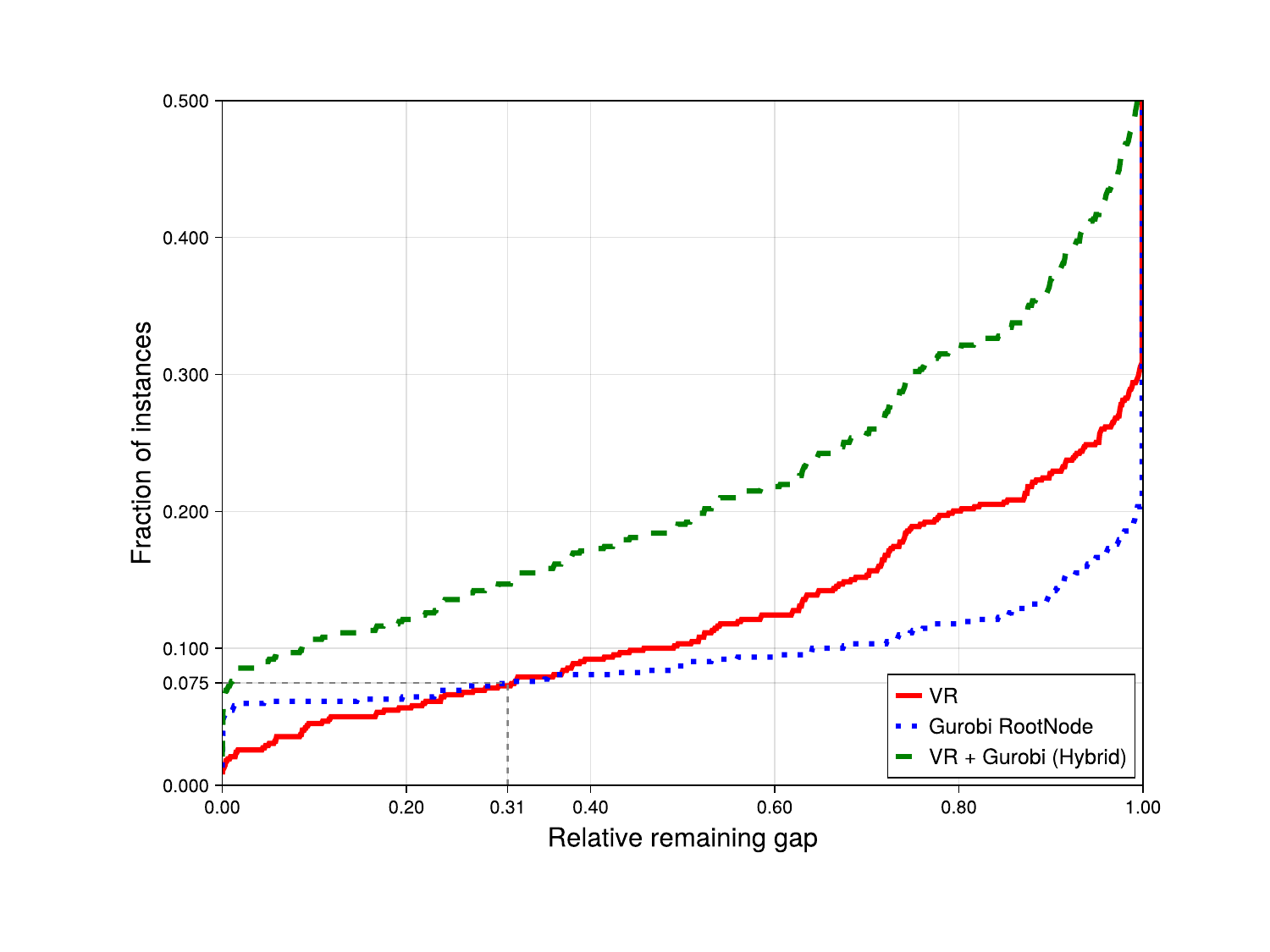}
\caption{Relative Closed Gap Distribution: VR v.s Gurobi Root-node Across MINLPLib Instances}
\label{Relative Remaining Gap Distribution MINLPLib}
\end{figure}

We further analyze the performance of VR considering the problems in which its performance improves on the benchmark (although the percentage is expressed over the full set of $619$ instances). As we mentioned before, VR improves over Gurobi on 29.9\% of the instances which corresponds to the $y$-intercept in Figure~\ref{fig:compareVRandGurobi}.

\begin{figure}[htbp]
\centering

\newcommand{\colw}{0.5\textwidth}
\newcommand{\vsep}{1ex}

\begin{tabular}{@{}c c@{}}

\begin{subfigure}[t]{\colw}\centering
\includegraphics[width=\linewidth]{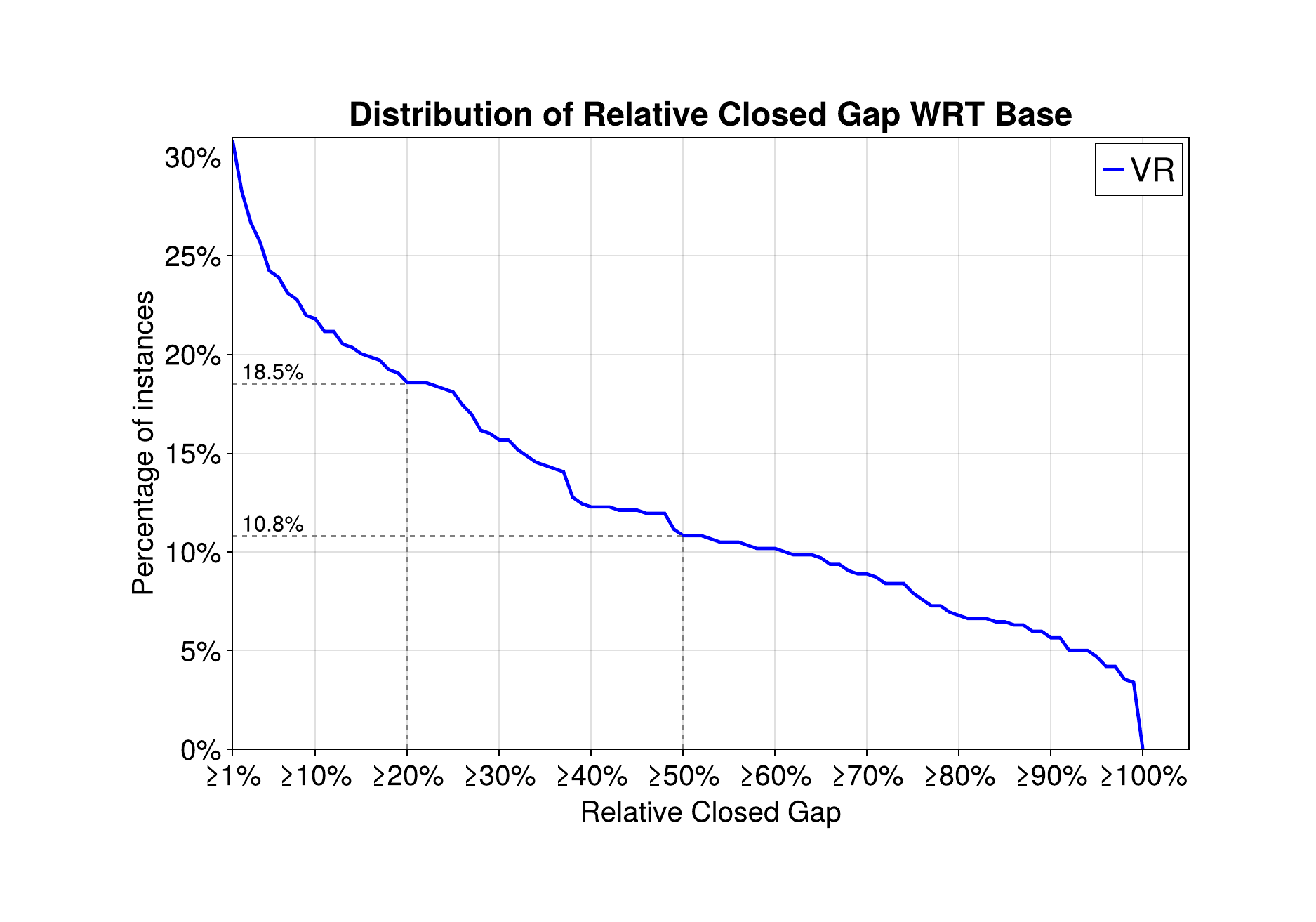}
\caption{VR relative to Base}\label{fig:compareVRandBase}
\end{subfigure} &
\begin{subfigure}[t]{\colw}\centering
\includegraphics[width=\linewidth]{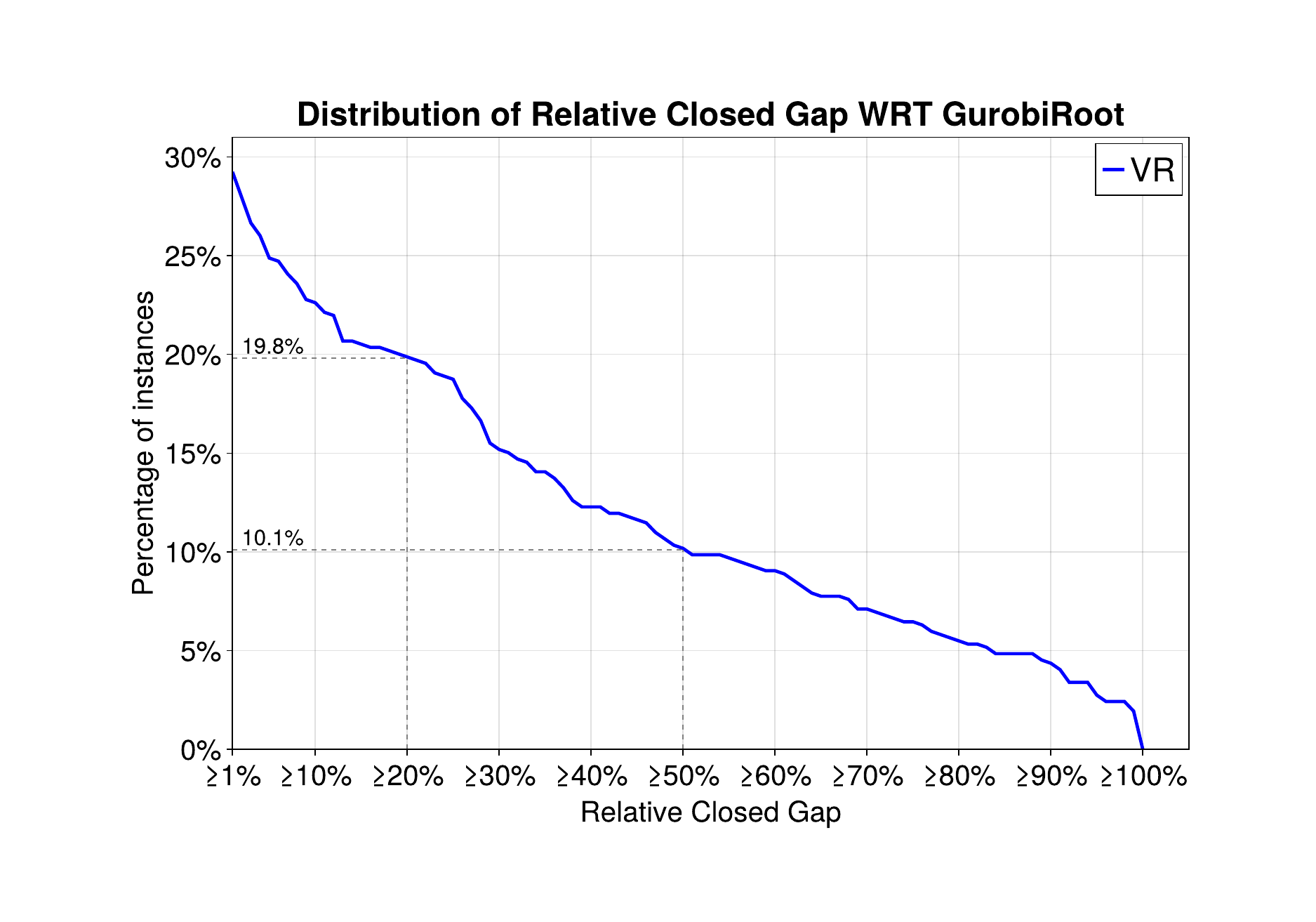}
\caption{VR relative to Gurobi}\label{fig:compareVRandGurobi}
\end{subfigure} 
\end{tabular}
\caption{Relative Closed Gap Distribution}
\label{Relative Closed Gap Distribution, Con}
\end{figure}

Key observations include:
\begin{itemize}
    \item Compared to Base, VR reduces the remaining gap by $> 20\%$ for $18.5\%$ of instances and by $> 50\%$ for $10.8\%$;
    \item Compared with Gurobi, VR reduces the gap by $> 20\%$ for $19.8\%$ instances and by $> 50\%$ for $10.1\%$.
\end{itemize}
The similarity in these distributions suggests VR's gains are broad. Specifically, VR's ability to improve Gurobi correlates strongly with its ability to improve Base, indicating that it captures structures missed by standard factorable programming. Importantly, these improvements do not introduce exceesive constraints or variables; most inequalities are sparse, and auxiliary variables match those of the Base relaxation, highlighting VR's potential as a valuable root-node relaxation.

Table~\ref{solving construction time} reports average solving and construction times. While VR takes longer to construct, solving times remain comparable. VR is designed for strong root-node relaxations; it need not be recomputed at every branch-and-bound node. Opportunities exist to memoize finite point sets or selectively refine formulations based on relaxation solutions, which we leave for future work.
\begin{table}[ht]
    \centering
    \begin{tabular}{|c|c c c|}
        \hline
         &Base  & VR& VRQ \\
        \hline
        Average Solving Time(s)&0.004 & 0.008 &0.008 \\
        Average Construction Time(s)&0.21& 5.01 &11.12 \\
        \hline
    \end{tabular}
    \caption{Average Solving and Construction Time of Base Relaxation, VR, and VRQ}\label{solving construction time}
\end{table}

\newcommand{\colw}{0.5\textwidth}  %
\newcommand{\imgw}{\linewidth}      %
\newcommand{\vsep}{0.8em}           %
\begin{figure}[t]
\centering

\begin{tabular}{@{}c c @{}}
\begin{subfigure}[t]{\colw}\centering
\includegraphics[scale = 0.3]{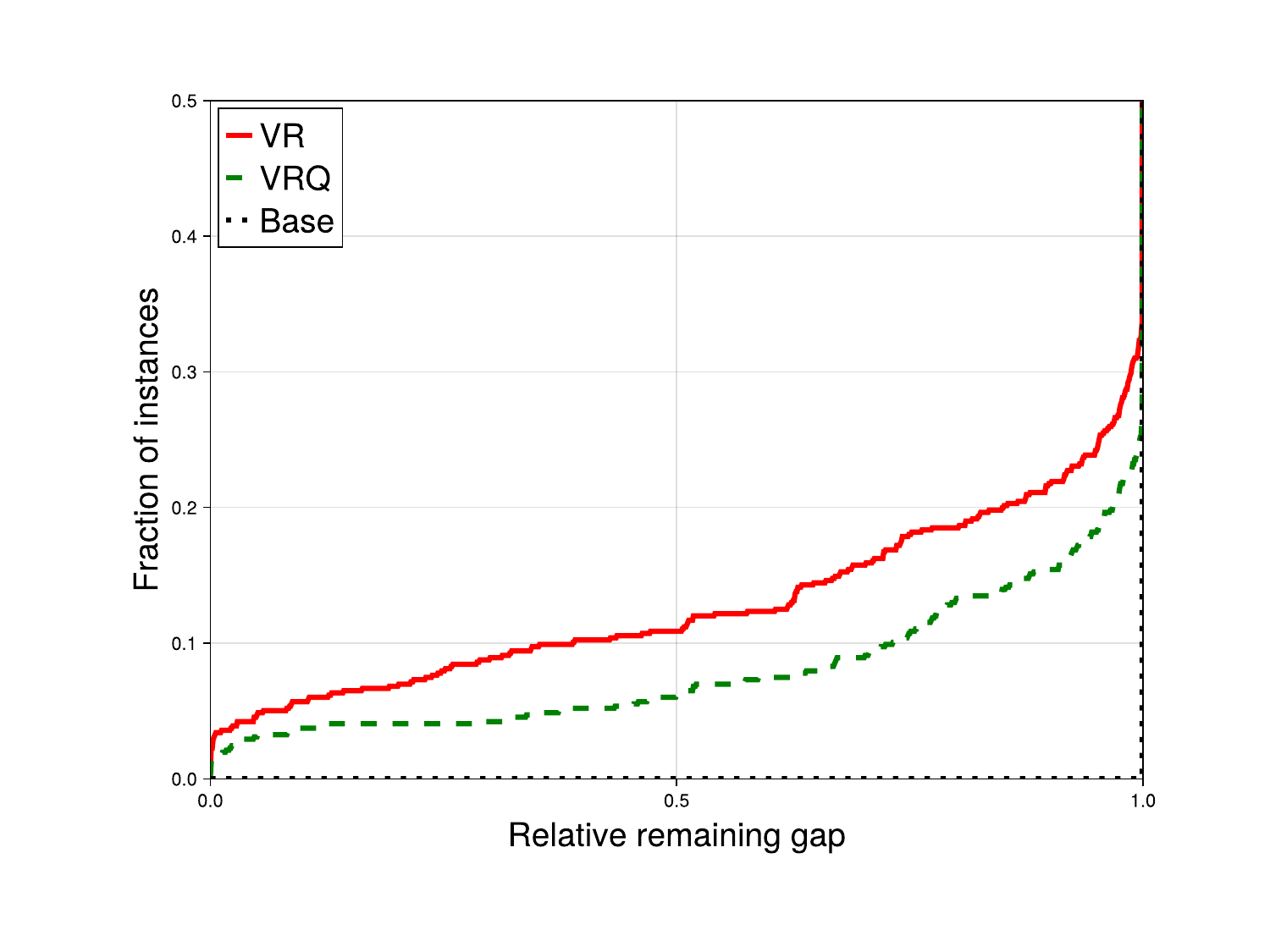}
\caption{VR v.s VRQ}
\label{Relative Remaining Gap Distribution MINLPLib VRQ}
\end{subfigure}
&
\begin{subfigure}[t]{\colw}\centering
\includegraphics[scale = 0.3]{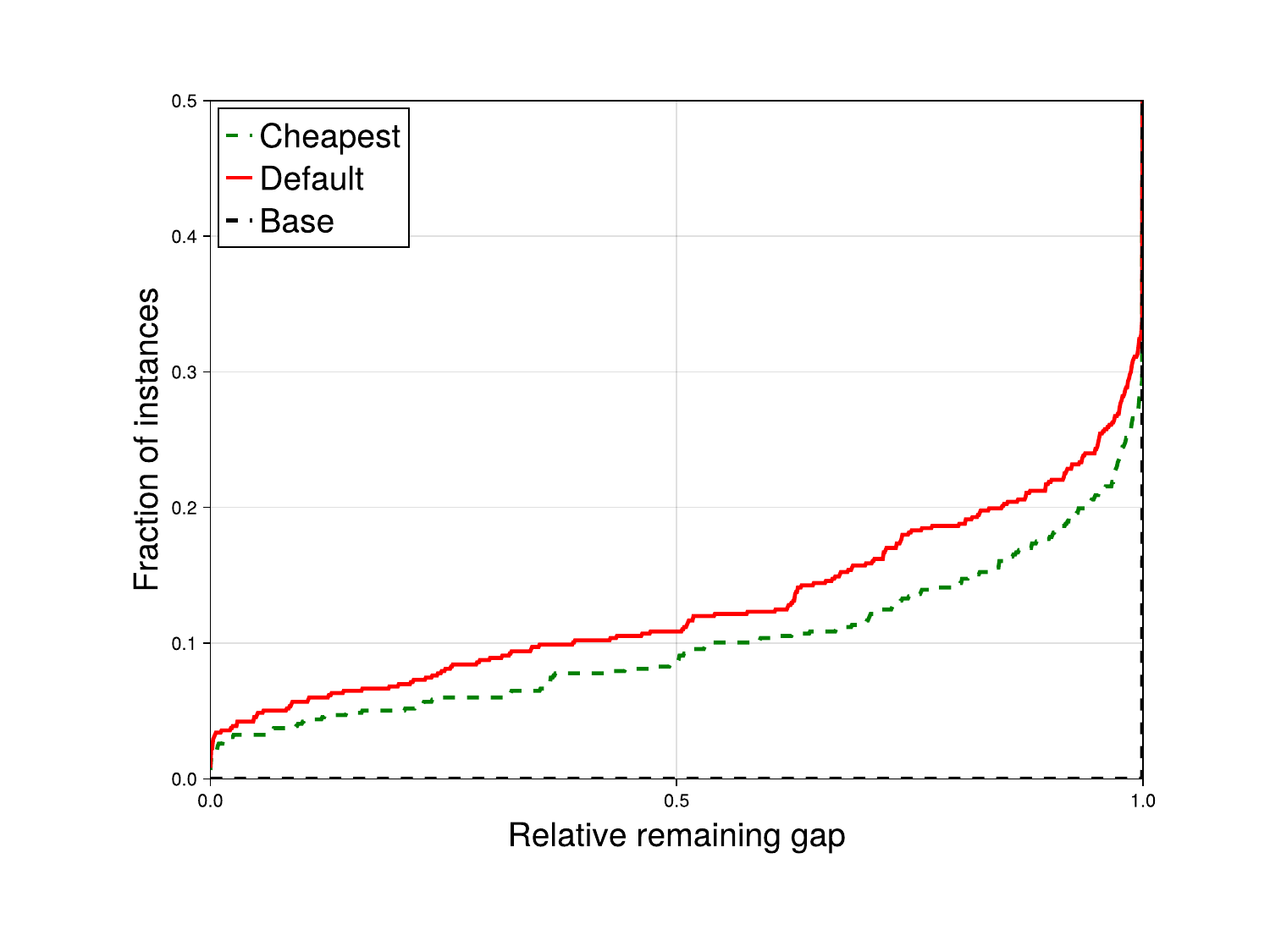}
\caption{Default v.s Cheapest}
\label{Relative Remaining Gap Distribution MINLPLib DC}
\end{subfigure}
\\[\vsep]

\begin{subfigure}[t]{\colw}\centering
\includegraphics[scale = 0.3]{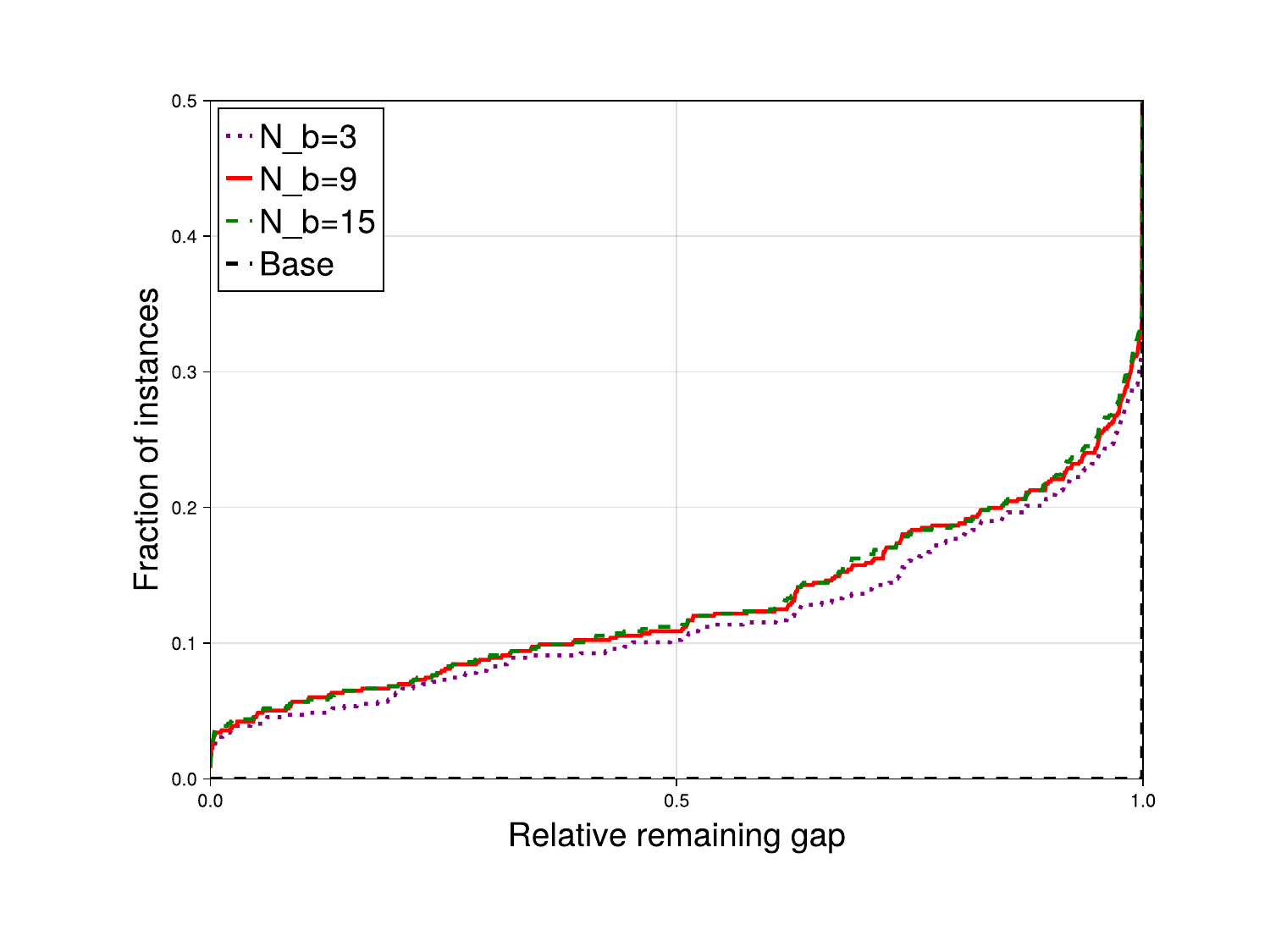}
\caption{Parameter: $N_b$}
\label{Relative Remaining Gap Distribution MINLPLib N_b}
\end{subfigure}

&
\begin{subfigure}[t]{\colw}\centering
\includegraphics[scale = 0.3]{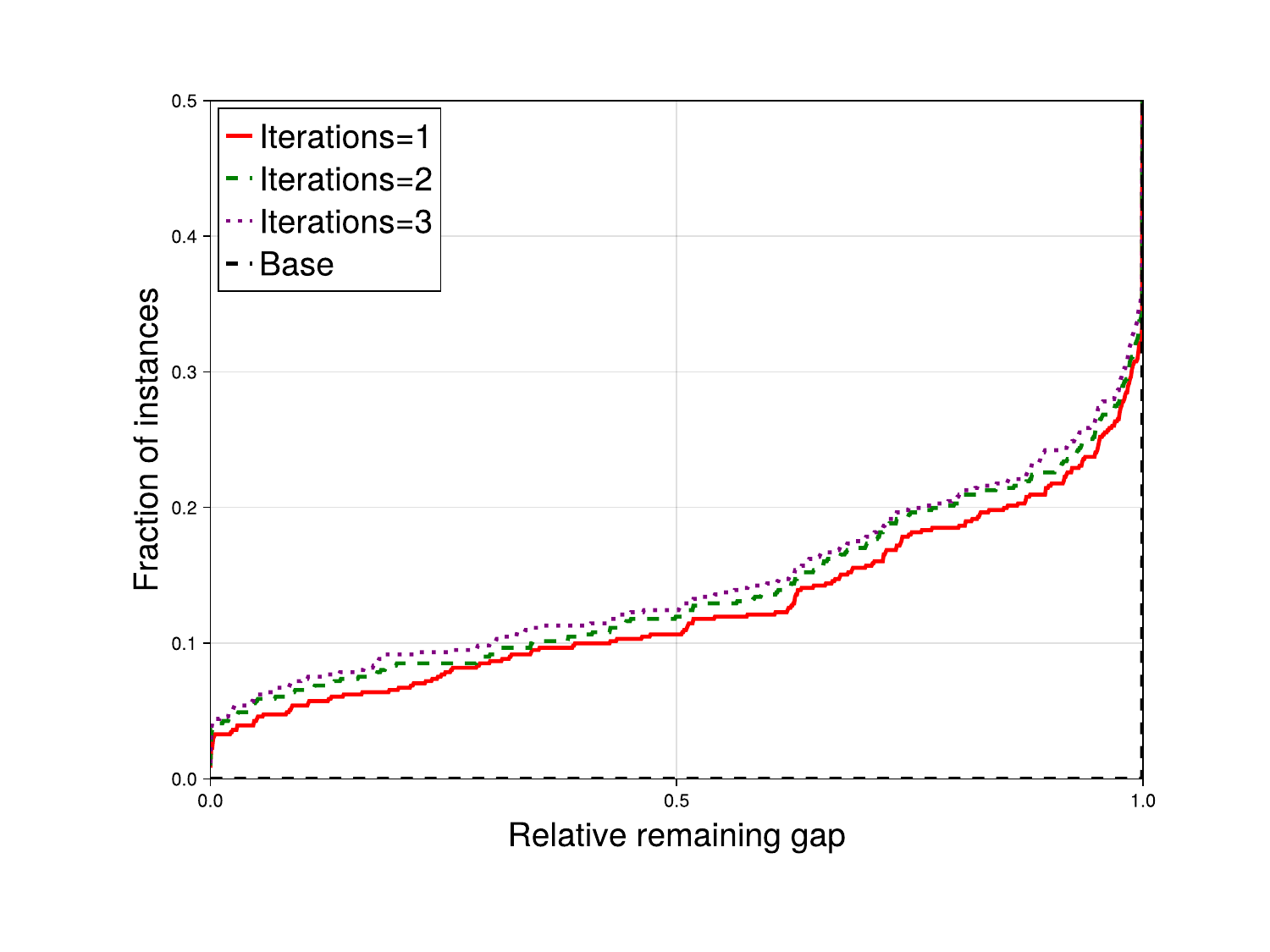}
\caption{Parameter: Iterations}
\label{Relative Remaining Gap Distribution MINLPLib ite}

\end{subfigure}
\\[\vsep]
\begin{subfigure}[t]{\colw}\centering
\includegraphics[scale = 0.3]{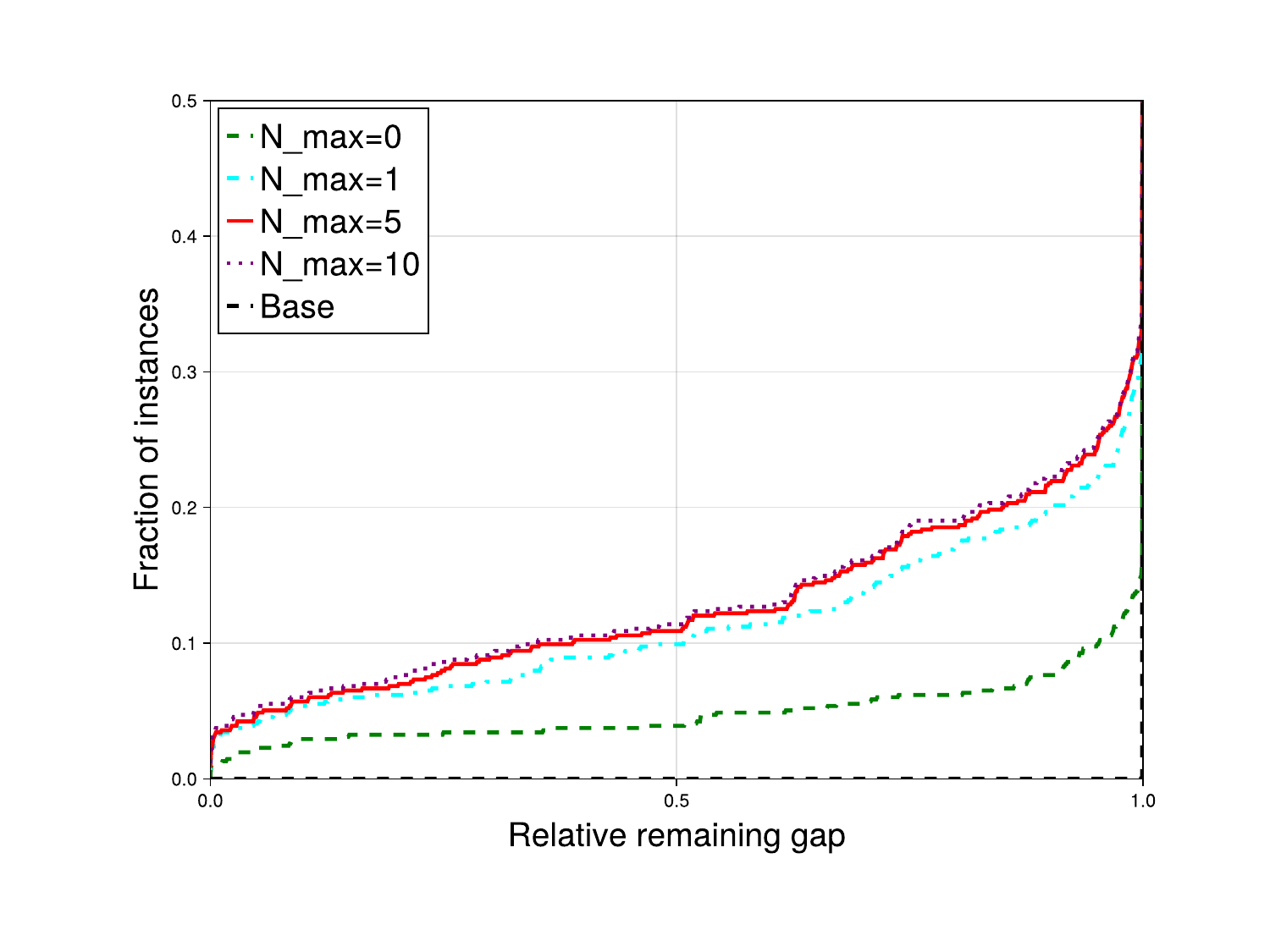}
\caption{Parameter: $N_{max}$}
\label{Relative Remaining Gap Distribution MINLPLib N_{max}}

\end{subfigure}
&
\begin{subfigure}[t]{\colw}\centering
\includegraphics[scale = 0.3]{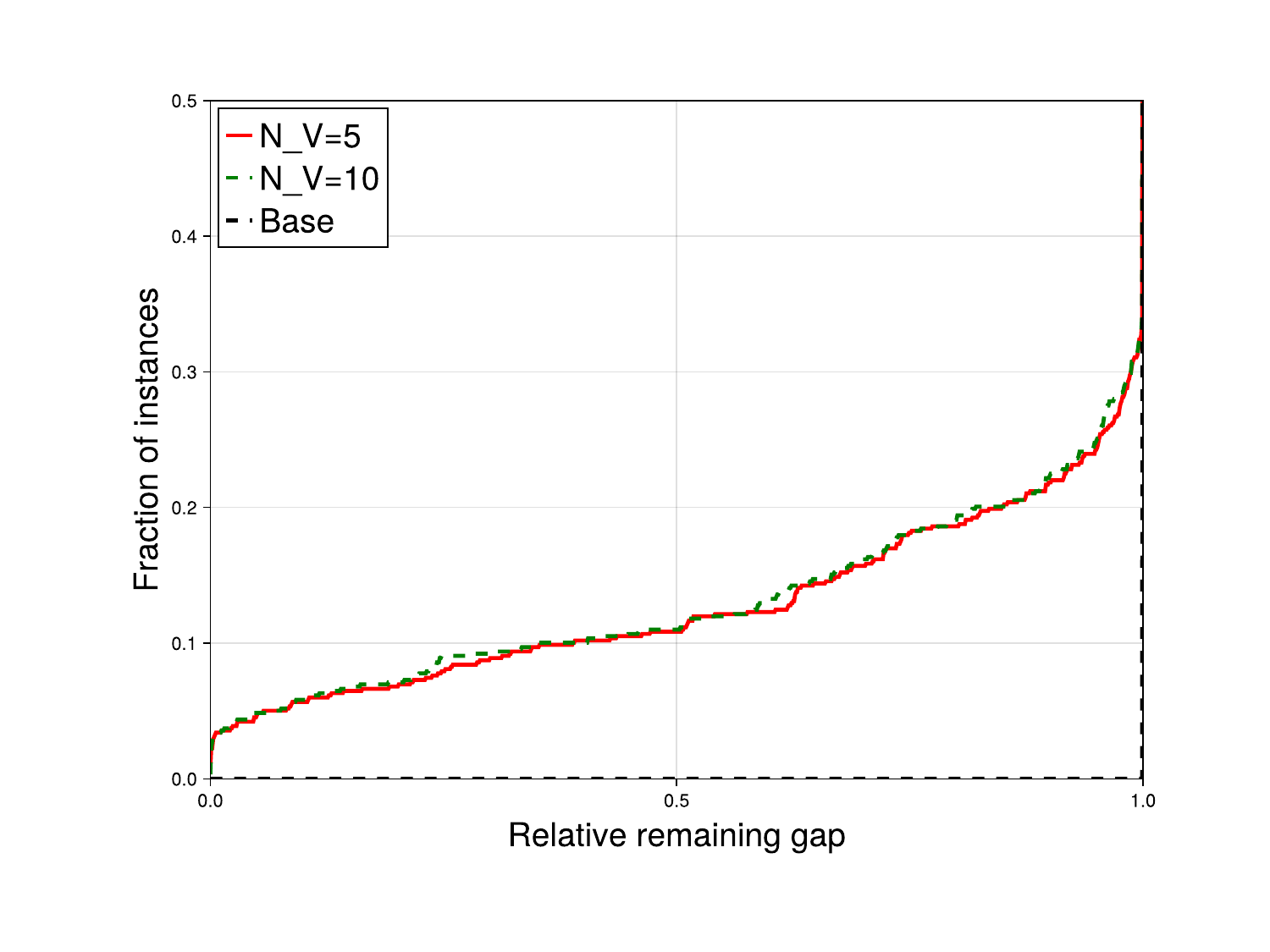}
\caption{Parameter: $N_V$}
\label{Relative Remaining Gap Distribution MINLPLib N_V}

\end{subfigure}
\end{tabular}
\caption{Parameter Study of VR with Relative Closed Gap Distribution}
\end{figure}
We default to approximate projection algorithm for voxelization (Section~\ref{sec:voxelProject}). We compare this against the quadtree-based scheme (Section~\ref{sec:voxelQuadtree}, abbreviated as VRQ. Figure~\ref{Relative Remaining Gap Distribution MINLPLib VRQ} shows the relative remaining gap and Table~\ref{solving construction time} reports total times. These results justify our default choice: VRQ is inferior in both relaxation quality and computational efficiency.

Finally, we conduct sensitivity analysis on VR's hyperparameters:
\begin{itemize}
    \item Number of breakpoints $N_b$ for univariate functions.
    \item Number of LPs $N_{\max}$ per approximate projection
    \item Number of VR iterations. 
    \item Number of voxels $N_V$ per segment.
\end{itemize}

We use $\mu(\alpha)$ (Equation~\ref{fraction rrg}) with Base as the reference.
First, we compare the default setting with a computationally cheaper setting ($N_b = 2$, $N_{\max} = 1$, Iterations = $1$, $N_V = 5$) in Figure~\ref{Relative Remaining Gap Distribution MINLPLib DC} and Table~\ref{solving construction time DC} showing that the construction time is significantly larger than the solving time for the relaxations and that both drop by a factor of two for the cheaper setting.

\begin{table}[ht]
    \centering
    \begin{tabular}{|c|c c|}
        \hline
         &Cheapest&Default\\
        \hline
        Average Solving Time(s)&0.004 & 0.008\\
        Average Construction Time(s)&2.61& 5.01\\
        \hline
    \end{tabular}
    \caption{Average Solving and Construction Time: Default v.s Cheapest}\label{solving construction time DC}
\end{table}

Varying $N_b$ (Figure~\ref{Relative Remaining Gap Distribution MINLPLib N_b}, Table~\ref{solving construction time N_b}) shows moderate improvement from $3$ to $9$, but negligible gains from $9$ to $15$. We select $N_b=9$ as default, though $N_b=3$ offers a computationally efficient alternative.

\begin{table}[ht]
    \centering
    \begin{tabular}{|c|c c c|}
        \hline
         &$N_b = 3$  & $N_b = 9\;(\text{Default})$&$N_b = 15$ \\
        \hline
        Average Solving Time(s)&0.007 & 0.008&0.008 \\
        Average Construction Time(s)&4.82& 5.01&5.15 \\
        \hline
    \end{tabular}
    \caption{Average Solving and Construction Time: VR with different $N_{b}$}\label{solving construction time N_b}
\end{table}

Varying $N_{\max}$ (Figure~\ref{Relative Remaining Gap Distribution MINLPLib N_{max}} and Table~\ref{solving construction time N_{max}}) reveals a significant jump from $0$ to $1$ LP. Increasing from $1$ to $10$ yields slight improvements at triple the cost. We default to $N_{\max} = 5$, noting that $N_{\max} = 1$ offers low computational overhead. This suggests diminishing returns for finer approximations. 

\begin{table}[ht]
    \centering
    \begin{tabular}{|c|c c c c|}
        \hline
         &$N_{\max} = 0$&$N_{\max} = 1$  & $N_{\max} = 5\;(\text{Default})$&$N_{\max} = 10$ \\
        \hline
        Average Solving Time(s)&0.005&0.005 & 0.008&0.008 \\
        Average Construction Time(s)&0.87&2.63& 5.01&7.40 \\
        \hline
    \end{tabular}
    \caption{Average Solving and Construction Time: VR with different $N_{max}$}\label{solving construction time N_{max}}
\end{table}

Varying number of iterations of VR (Figure~\ref{Relative Remaining Gap Distribution MINLPLib ite} and Table~\ref{solving construction time ite} shows non-negligible benefits but at high computational cost. Our current implementation does not reuse information across rounds, leading to significant overhead. Future work could address this and explore whether multiple rounds may offer computational benefits.

\begin{table}[ht]
    \centering
    \begin{tabular}{|c|c c c|}
        \hline
         &$\text{Iterations} = 1(\text{Default})$  & $\text{Iterations}= 2$&$\text{Iterations} = 3$ \\
        \hline
        Average Solving Time(s)&0.008 & 0.013&0.015 \\
        Average Construction Time(s)&5.01& 20.62&56.61 \\
        \hline
    \end{tabular}
    \caption{Average Solving and Construction Time: VR with different $N_{max}$}\label{solving construction time ite}
\end{table}

Finally, varying $N_V$ (Figure~\ref{Relative Remaining Gap Distribution MINLPLib N_V} and Table~\ref{solving construction time N_V}) shows negligible improvement from $5$ to $10$, while doubling the cost. Thus, $N_V=5$ seems to be a reasonable default choice.

\begin{table}[ht]
    \centering
    \begin{tabular}{|c|c c|}
        \hline
         &$N_V = 5\;(\text{Default})$  & $N_V= 10$ \\
        \hline
        Average Solving Time(s)&0.008 & 0.010 \\
        Average Construction Time(s)&5.01& 7.66 \\
        \hline
    \end{tabular}
    \caption{Average Solving and Construction Time: VR with different $N_V$}\label{solving construction time N_V}
\end{table}

\begin{appendices}

\section{Missing Proofs}\label{secA1}

\subsection{Proof of Theorem~\ref{thm:pentagon}}

In this proof, we need to characterize the convex hull of 
\[
S^P:=\bigl\{(x_1,f_1,x_2,f_2, f_1f_2) \bigm| (x_i,f_i) \in  P_i\; \for i = 1, 2 \bigr\}. 
\]
  We start by showing that  it is equal to the convex hull of
\[
S^Q:=\bigl\{(x_1,f_1,x_2,f_2, f_1f_2) \bigm| (x_i,f_i) \in  Q_i\; \for i = 1, 2 \bigr\}, 
\]  
where $Q_i$ is the triangle defined as the convex hull of $v_{i1}$, $v_{i2}$ and $v_{i3}$. Let $R:= \bigl\{(x,f,\mu)\bigm| 0 \leq x \leq x',\  (x',f,\mu) \in \conv(S^Q) \bigr\}$. Then, $\conv(S^P) \subseteq R$ holds since $R$ is convex, and, by the definition of $P$, $P \subseteq \bigl\{(x,f) \bigm| 0 \leq x \leq x',\ (x'_i,f_i) \in Q_i \for i =1,2 \bigr\}$ which shows $S^P\subseteq R$. Now, to show $R \subseteq \conv(S^P)$, we consider a point $(\bar{x}, \bar{f},\bar{\mu})\in R$. Then, there exists an $\bar{x}'$  such that $\bar{x}' \geq \bar{x}$ and $(\bar{x}',\bar{f},\bar{\mu}) \in \conv(S^Q)$. Clearly, $(\bar{x},\bar{f},\bar{\mu}) \in H:=\{(x,f,\mu) \mid (x',f,\mu) =(\bar{x}',\bar{f},\bar{\mu}),\ 0\leq x \leq \bar{x}'\}$. So, it suffices to show that the vertex set of $H$ belongs to $\conv(S^P)$. 
 The vertex of the form $(0, \bar{f},\bar{\mu})$ lies in $\conv(S^P)$. For $I \subseteq \{1,2\}$, let $S_I:=\{(x,f,\mu) \mid (x',f,\mu ) \in S^Q, x_i = x'_i \for i \in I,\ x_i = 0 \for i \notin I  \}$, and observe that it can be expressed as the image of $S^Q$ under the affine transformation $A_I:(x',f,\mu) \mapsto (x'_I,0, f,\mu)$. Therefore, $\conv(S_I) =\conv\bigl(A_I(S^Q)\bigr) = A_I\bigl(\conv(S^Q)\bigr)$. Hence, $(\bar{x}'_I,0,\bar{f},\bar{\mu}) \in A_I\bigl(\conv(S^Q)\bigr) = \conv(S_I) \subseteq \conv(S^P)$, where the containment holds since $(\bar{x}',\bar{f},\bar{\mu}) \in \conv(S^Q)$ and the inclusion follows from $S_I \subseteq S^P$.

Next, we characterize the convex hull of $S^Q$. It suffices to derive the concave and convex envelope of $f_1f_2$ over $Q$.  First, we show that $\min\bigl\{\langle \beta_k, (x,f) \rangle + d_k \bigm| k=1, \ldots, 6 \bigr\}$ describes the concave envelope of $f_1f_2$ over $Q$. Since $v_{i1} \leq v_{i2} \leq v_{i3}$, we can treat these vertices as a chain, and $\{v_{11}, v_{12}, v_{13}\} \times \{v_{21}, v_{22}, v_{23}\}$ as a lattice consisting of a product of chains. Moreover, the bilinear term $f_1f_2$ is supermodular over this lattice and concave-extendable from the vertex set of $Q$~\citep[Theorem 8]{tawarmalani2002convex}. It follows from Lemma 2 in~\cite{he2022tractable} that the concave envelope can be obtained by affinely interpolating $f_1f_2$ over the vertices of 6 simplices, $\Upsilon^k$, $k \in \{1, \ldots, 6 \}$ defined as follows. Let $(\pi^1,\ldots, \pi^6) = \bigl( (2,2,1,1), (1,1,2,2), (1,2,2,1), (2,1,1,2), (1,2,1,2), (2,1,2,1)\bigr)$, and for $k \in \{1, \ldots, 6\}$, let 
\[
\Upsilon^k :=\conv\Bigl(\Bigl\{(v_{1j_1}, v_{2j_2}) \Bigm| (j_1, j_2) = (1,1) +\sum_{p=1}^t e_{\pi^i_p} ,\ t=0,\ldots,4 \Bigr\}\Bigr) \quad \text{ for } k=1, \ldots, 6,
\]
where $e_j$ is the $j^{\text{th}}$ principal vector in $\R^2$.  The affine functions $\langle \beta_k, (x,f)\rangle + d_k$ in the statement of the theorem are obtained by interpolating $f_1f_2$ over the vertices of $\Upsilon^k$.

A similar argument, leveraging the submodularity of $f_1f_2$ over the lattice $\{v_{11},v_{12},v_{13}\}\times\{v_{23},v_{22},v_{21}\}$, shows that $\max \{ \langle \alpha_i, (x,f) \rangle + b_i \mid i=1, \ldots, 6 \}$ describes the convex envelope of $f_1f_2$ over $\P$. The affine functions $\langle \alpha_i, (x,f) \rangle + b_i$ are obtained by affinely interpolating $f_1f_2$ over 6 simplices defined as
\[
\Upsilon'_{\pi^i} :=\conv\Bigl(\Bigl\{(v_{1j_1}, v_{2(4-j_2)}) \Bigm| (j_1, j_2) = (1,1) +\sum_{p=1}^k e_{\pi^i_p} ,\ k=0,\ldots,4 \Bigr\}\Bigr) \quad \text{ for } i=1, \ldots, 6.
\]
This completes the proof.
\subsection{Proof of Lemma~\ref{lemma:bilinear-extension}}\label{app:bilinear-extension}
\begin{proof}

Since the set $\AxisAligned$ is a finite union of rectangles, we construct two vectors consisting of extremal values along each coordinate axis. For any rectangle $H$ in the union, let $x^H_{i\min} = \min\{x_i\mid x\in H\}$ and $x^H_{i\max} = \max\{x_i\mid x\in H\}$. We form a sorted vector for axis $x_i$, $i=1,2$ containing all such bounds from every rectangle. We will show that the state space $\state$ of the Markov chain derived by Algorithm~\ref{alg:2d-markov} is a subset of the grid obtained as the Cartesian product of the two sorted vectors. Since the number of rectangles is finite, this grid--and thus $\state$--is finite. Consequently Algorithm~\ref{alg:2d-markov} terminates finitely, as Step~\ref{algstep:unotstate} only adds a point $u$ to the stack if it is newly explored. 

It remains to show that every point generated at Step~\ref{algstep:findEnds} lies on the grid. We prove by induction that if the stack $\mathcal{U}$ contains only grid points, then any point $u$ added at Step~\ref{algstep:addU} is also a grid point. The base case holds because the vertices of the constituent rectangles are grid points by construction. Assume the point $v$ popped from $\mathcal{U}$ on Line~\ref{algstep:simplePop} is a grid point. The endpoints $v_l$ and $v_r$ generated at Step~\ref{algstep:findEnds} lie on the boundaries of some rectangles $H_l, H_r\subseteq \AxisAligned$. Specifically, $v_l$ is obtained by fixing one coordinate to a grid value (matching $v$) and minimizing the other. This minimized coordinate corresponds to the lower bound of $H_l$ and was explicitly included in the constructed vector. Thus, $v_l$ is a grid point. A symmetric argument confirms that $v_r$ is also a grid point. It follows that Algorithm~\ref{alg:2d-markov} terminates finitely, returning a finite discrete Markov chain.

    Then, we prove that $(\state, \transition)$ is absorbing with $\corner(\AxisAligned)$ as absorbing states. By construction, only $\corner(\AxisAligned)$ are absorbing states. Therefore, we only need to prove that for any non-corner state $v\in\state$, there exists a path in $(\state,\transition)$ from it to a corner point.
    Note that every point $v\in\state$ that is not in $\corner(\AxisAligned)$ is a convex combination of grid-points along either the $x_1$- or $x_2$-direction, and the transition probabilities direct the chain towards these neighbors. Now, choose the neighbor that majorizes the current point, that is, its $x_1$- or $x_2$-coordinate is strictly larger than the current point. Since $(\state,\transition)$ is finite, and each iteration yields a strictly majorizing point, this process does not enter a loop, and terminates in finite steps. Since termination can occur only at a corner point, there exists a path from it to a corner point.

    Finally, we prove by induction that for every state $x\in \state$, and every integer $i\geq 1$, we have:
    \begin{equation}\label{eq:singlesteptransition}
     (x, x_1x_2) = \sum_{v\in\state}\transition^i(x,v) \cdot (v, v_1v_2).
    \end{equation}
    For $i = 1$, the statement holds by construction of the the Markov chain.
    Suppose the statement holds for $i = k-1$. Then
     \begin{align*}
      (x,x_1x_2) = \sum_{v\in\state}\transition^{k-1}(x,v) \cdot (v, v_1v_2) &= 
     \sum_{v\in\state} \transition^{k-1}(x,v)\sum_{v^*\in\state}\transition(v,v^*) \cdot (v^*,v^*_1v^*_2)\\
     &=\sum_{v^*\in\state}(\sum_{v\in\state}\transition^{k-1}(x,v)\transition(v,v^*) \cdot (v^*,v^*_1v^*_2))\\
     &=\sum_{v^*\in\state}\transition^k(x,v^*)\cdot (v^*,v^*_1v^*_2),
     \end{align*}
    where the first two equalities are by the induction hypothesis, and the third equality is by interchanging the summations. 
    By induction, the identity holds for all positive integers $i$. Taking the limit as $i\to\infty$, we obtain
    \[(x, x_1x_2) =\sum_{v\in\state}\lim_{i\to +\infty}\transition^i(x,v)(v, v_1v_2) = \sum_{v\in\corner(\AxisAligned)}\transition^\limiting(x,v)(v,v_1v_2),\]
     where the last equality follows from the definition of $\transition^\limiting(x,v)$ and the previous statement, which implies that $\transition^\limiting(x, v) = 0$ for all non-corner states $v$. Thus, $(x,x_1x_2) =\sum_{v\in\corner(\AxisAligned)}\transition^\limiting(x,v)\cdot (v,v_1v_2)$ as required. Since $(x,\ell(x))$ is an affine transform of $(x,x_1x_2)$ and convexification commutes with affine transformation, the result follows.

\end{proof}

\section{Missing Procedures}

\subsection{Factorable relaxation}\label{Factorable relaxation}
We use Algorithm~\ref{alg:factorable relaxation} to create a factorable relaxation which is enhanced as described in Section~\ref{BASE}.
\begin{algorithm}[ht]
\caption{Factorable Relaxation}
\label{alg:factorable relaxation}
\begin{algorithmic}[1]
\Require MINLP Problem $P$ as defined in Eq. \ref{prob:general_optimization}
\Ensure A polyhedral relaxation of $P$
\State Initialize linear system $I$ with the linear constraints from $P$
\State Construct expression trees $\{T_i\}_{i=1}^m$ for all constraint functions $g_i(x)$, and expression tree $T_0$ for objective function $f(x)$
\State Introduce auxiliary variables for all operator nodes in $\{T_i\}_{i=0}^m$ to the system $I$. 
\State Identify the bounds of every variable in the system $I$

\For{each tree $T_i$}
    \For{each operator node $v \in T_i$}
        \State Identify its children $(u_1, u_2,\cdots,u_\ell)$
        \If{$v$ is a linear/affine operation}
            \State Add exact linear equality $v = \text{affine}(u_1, u_2,\cdots,u_\ell)$ to $I$
        \ElsIf{$v = u_1 u_2$ or $v = u_1 / u_2$ (as $v u_2 = u_1$)}
            \State Add McCormick inequalities for the bilinear term to $I$.
        \ElsIf{$v = f(u_1)$ is a univariate nonlinear function}
            \State Add constraints defined by linear under- and over-estimators of $f$ to $I$
        \EndIf
    \EndFor
    \State For $v$ representing the root node of $T_i$, add constraint $v\leq 0$ to $I$.
\EndFor
\end{algorithmic}
\end{algorithm}

\subsection{Computation of Variable Bounds}\label{variablebound}
\subsubsection{Bound Propagation}
Bound propagation~\cite{moore2009introduction,mccormick1976computability} is a computationally efficient technique for estimating lower and upper bounds of a factorable function using interval arithmetic. Given interval bounds for the input variables, bounds for composite expressions are obtained by recursively applying interval operations according to the function's structure. Specifically, for two intervals \( [a^L,a^U] \) and \( [b^L,b^U] \), the basic arithmetic operations are bounded as follows:
\[
\begin{aligned}[b]
[a^L,a^U] + [b^L,b^U] &= [a^L + b^L,\; a^U + b^U], \\
[a^L,a^U] - [b^L,b^U] &= [a^L - b^U,\; a^U - b^L], \\
[a^L,a^U] \times [b^L,b^U] &= 
\big[ \min\{a^L b^L, a^L b^U, a^U b^L, a^U b^U\},\;
       \max\{a^L b^L, a^L b^U, a^U b^L, a^U b^U\} \big], \\
[a^L,a^U] \div [b^L,b^U] &= [a^L,a^U] \times 
\Big[\frac{1}{b^U},\frac{1}{b^L}\Big],
\quad \text{provided } 0 \notin [b^L,b^U].
\end{aligned}
\]
Bounds for elementary univariate functions are obtained by evaluating their minimum and maximum values over the given interval. By propagating these bounds through the computational graph of the factorable function, valid lower and upper bounds on the function value are obtained. Although bound propagation may produce weak bounds, it is very efficient. We implement it on the following function over $[0,1]^3$ to illustrate how to obtain bounds of a factorable function with bound propagation (Figure~\ref{fig:expr_interval_tree}):
\[f(x) = x_1^2 \exp(x_1) x_2 - x_2^2x_3^3x_1^3\]
\begin{figure}[H]
\centering

\centering

\begin{forest}
for tree={
  draw,
  rounded corners,
  align=center,
  inner sep=2pt,
  s sep=8pt,
  l sep=12pt,
  font=\small
}
[$-:$ \text{[-1,e]} 
    [$\times: {[0,e]}$
      [$\times: {[0,e]}$
        [$\power:{[0,1]}$  [$x_1:{[0,1]}$ ] [$2$]]
        [$\exp:{[1,e]}$   [$x_1:{[0,1]}$]]
      ]
      [$x_2:{[0,1]}$ ]
    ]
  [$\times:{[0,1]}$ 
    [$\times:{[0,1]}$ 
      [$\power:{[0,1]}$  [$x_2:{[0,1]}$ ] [$2$]]
      [$\power:{[0,1]}$  [$x_3:{[0,1]}$ ] [$3$]]
    ]
    [$\power:{[0,1]}$  [$x_1:{[0,1]}$ ] [$3$]]
  ]
]
\end{forest}

\caption{Expression tree and corresponding interval tree for
\(f(x)=x_1^2\exp(x_1)x_2 - x_2^2x_3^3x_1^3\) over \([0,1]^3\).}
\label{fig:expr_interval_tree}
\end{figure}
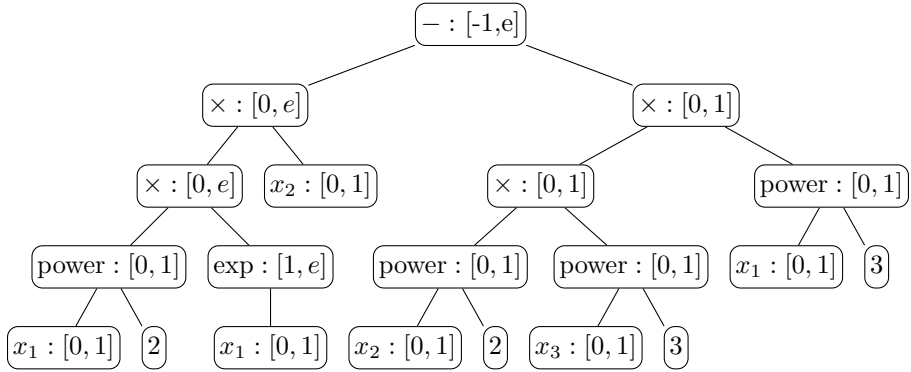

Bound propagation provides the bounds $[-1, e]$ whereas the best possible bounds are $[0, e]$. 
Bound propagation is certified to provide valid bounds in this case~\cite{moore2009introduction}.
Furthermore, when the factorable function only involves continuous operators and basic functions, bound propagation converges to a degenerate interval containing the function value as the domain shrinks, making it compatible with branch-and-bound techniques~\cite{moore2009introduction}.

\subsubsection{Inverse Bound Propagation}
Inverse bound propagation operates in the reverse direction of standard bound propagation. This idea is also used in SCIP and is incorporated in a range reduction technique called OBTT~\cite{bestuzheva2023global}. Whereas bound propagation evaluates an expression tree bottom--up, propagating bounds from the leaves to the root, inverse bound propagation propagates bounds top--down, from the root to the leaves, to tighten variable domains using known constraints.

For example, the constraint \(x^2 \le 1\) implies the variable bound \(x \in [-1,1]\). 
The constraint \(x+y \le 1\) together with initial bounds \((x,y)\in[0,2]^2\), imply the tighter bounds \((x,y)\in[0,1]^2\).
As a formalization, for basic arithmetic operations, the tightening rules are as follows.
\paragraph{Addition.}
If
\[
z = x + y, \quad z \in [z^L,z^U],
\]
then
\[
\begin{aligned}
x &\in [x^L,x^U] \cap [\,z^L - y^U,\; z^U - y^L\,], \\
y &\in [y^L,y^U] \cap [\,z^L - x^U,\; z^U - x^L\,].
\end{aligned}
\]

\paragraph{Subtraction.}
If
\[
z = x - y, \quad z \in [z^L,z^U],
\]
then
\[
\begin{aligned}
x &\in [x^L,x^U] \cap [\,z^L + y^L,\; z^U + y^U\,], \\
y &\in [y^L,y^U] \cap [\,x^L - z^U,\; x^U - z^L\,].
\end{aligned}
\]

\paragraph{Multiplication.}
If
\[
z = x\,y, \quad z \in [z^L,z^U],
\]
and the current bounds are \(x \in [x^L,x^U]\) and \(y \in [y^L,y^U]\), then inverse bound propagation tightens the variable bounds as
\[
\begin{aligned}
x &\in [x^L,x^U] \cap \Bigl(\,[z^L,z^U] \div [y^L,y^U]\,\Bigr), \\
y &\in [y^L,y^U] \cap \Bigl(\,[z^L,z^U] \div [x^L,x^U]\,\Bigr),
\end{aligned}
\]

\paragraph{Division.}
If
\[
z = \frac{x}{y}, \quad z \in [z^L,z^U],
\]
with \(y \in [y^L,y^U]\) and \(0 \notin [y^L,y^U]\), then inverse bound propagation yields
\[
\begin{aligned}
x &\in [x^L,x^U] \cap \Bigl(\,[z^L,z^U] \times [y^L,y^U]\,\Bigr), \\
y &\in [y^L,y^U] \cap \Bigl(\,[x^L,x^U] \div [z^L,z^U]\,\Bigr),
\end{aligned}
\]

\paragraph{Monotone univariate functions.}
If
\[
z = \phi(x),
\]
where \(\phi\) is monotone on \([x^L,x^U]\), then
\[
x \in [x^L,x^U] \cap \phi^{-1}([z^L,z^U]).
\]
In particular, piecewise monotone univariate functions, such as $x^2$ can be handled by applying the above rule separately on each monotone segment.

These rules are applied recursively along the expression tree until no further domain reductions are possible or the procedure is terminated by a prescribed iteration limit.

\subsection{Procedure to generate random polynomial optimization instances}\label{app:generation}  
The instances are generated as follows:
\begin{itemize}
	\item each monomial $x^{\alpha_j}$ is generated by first randomly selecting nonzero entries of $\alpha_j$ such that  the number of nonzero entries is $2$ or $3$ with equal probability, then assign each nonzero entry $2$ or $3$ uniformly. 
	\item each entry of $d$ and $B$ is zero with probability $0.3$ and uniformly generated from $[0,1]$ with probability $0.7$.
	\item each entry of $A$ is uniformly distributed over $[-10,10]$, and $x^L_i$ and $x_i^U$ is uniformly selected from $\{0,1,2\}$ and $\{3,4\}$, respectively.
	\item $c = \sum_{j = 1}^m \nabla m_j(\tilde{x}) d_j$, and $b = A\tilde{x} + B\tilde{y}$, where $\tilde{x}$ is randomly generated from $[x^L, x^U]$, $\nabla m_j(\tilde{x})$ is the gradient of $x^{\alpha_j}$ at $\tilde{x}$, and $\tilde{y} = (\tilde{x}^{\alpha_1}, \ldots, \tilde{x}^{\alpha_m} )$.
\end{itemize}
This procedure for generating random instances was first suggested in \cite{he2024mip}.

\section{Missing figures}
\subsection{Convex extension Markov chain of the axis-aligned region in Figure~\ref{markov chain polytope}}\label{secA2}
In Figure~\ref{markov chain polytope}, we illustrate the Markov chain corresponding to the region in Figure~\ref{fig:loop}, where $\AxisAligned = [0,1]\times [0,2]\cup [0,2]\times[1,2]\cup[2,4]\times[1,4]\cup[2,3]\times[4,5]\cup[4,5]\times[3,4]\cup[3,4]\times[0,1]$. Though there is a loop $s_7-s_9-s_{10}-s_{11}-s_7$, the stationary joint distribution can give a convex combination of $s_7 = (2,1)$ by only the corner points of $\AxisAligned$. Let $\transition^\limiting$ be the limiting distribution of the Markov chain represented by a $21\times 21$ matrix, where $\transition^\limiting_{ij}$ represents the probability of ending up with $s_j$ when starting from $s_i$.
Then, for $s_7$, the relevant transition probabilities are as follows:
\[\transition^\limiting_{7,1} = \frac{24}{95},\transition^\limiting_{7,4} = \frac{24}{95},\transition^\limiting_{7,12} = \frac{8}{95},\transition^\limiting_{7,15} = \frac{3}{95},\transition^\limiting_{7,17} = \frac{36}{95},\]
while $\transition^\limiting_{7,i} = 0$ for other states $i$. It is easy to check that  $s_7 = \sum_{i=1}^{21}\transition^\limiting_{7,i} s_i$, and $s_7[1]s_7[2] = \sum_{i=1}^{21}\transition^\limiting_{7,i} s_i[1]s_i[2]$. 
\begin{figure}
    \centering
    \includegraphics{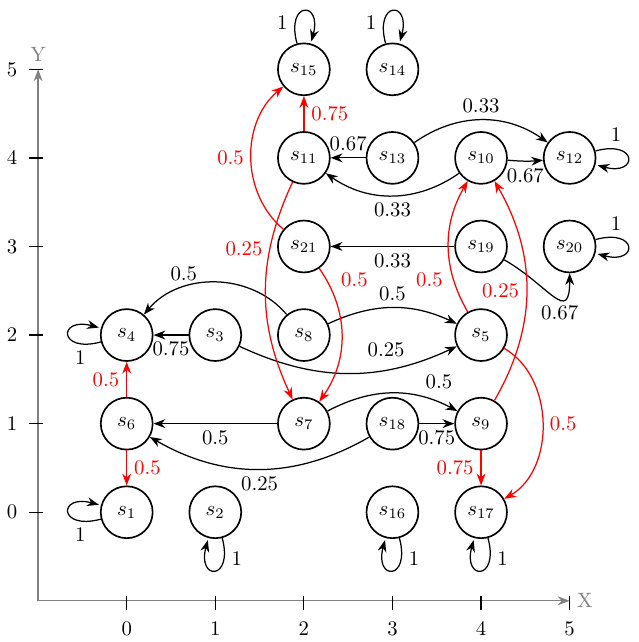}
    \caption{Markov Chain}
    \label{markov chain polytope}
\end{figure}

\end{appendices}

\newpage
\bibliographystyle{plainnat} %
\bibliography{reference}%

@article{lubin2023jump,
  title={\text{JuMP} 1.0: Recent improvements to a modeling language for mathematical optimization},
  author={Lubin, Miles and Dowson, Oscar and Garcia, Joaquim Dias and Huchette, Joey and Legat, Beno{\^\i}t and Vielma, Juan Pablo},
  journal={Mathematical Programming Computation},
  volume={15},
  number={3},
  pages={581--589},
  year={2023},
  publisher={Springer}
}

@article{nagarajan2019adaptive,
  title={An adaptive, multivariate partitioning algorithm for global optimization of nonconvex programs},
  author={Nagarajan, Harsha and Lu, Mowen and Wang, Site and Bent, Russell and Sundar, Kaarthik},
  journal={Journal of Global Optimization},
  volume={74},
  number={4},
  pages={639--675},
  year={2019},
  publisher={Springer}
}

@article{atamturk2023supermodularity,
  title={Supermodularity and valid inequalities for quadratic optimization with indicators},
  author={Atamt{\"u}rk, Alper and G{\'o}mez, Andr{\'e}s},
  journal={Mathematical Programming},
  volume={201},
  number={1},
  pages={295--338},
  year={2023},
  publisher={Springer}
}

@article{dey2025second,
  title={A second-order cone representable class of nonconvex quadratic programs},
  author={Dey, Santanu S and Khajavirad, Aida},
  journal={arXiv preprint arXiv:2508.18435},
  year={2025}
}

@article{he2025convexification,
  title={Convexification techniques for fractional programs},
  author={He, Taotao and Liu, Siyue and Tawarmalani, Mohit},
  journal={Mathematical Programming},
  volume={213},
  number={1},
  pages={107--149},
  year={2025},
  publisher={Springer}
}

@article{del2021running,
  title={The running intersection relaxation of the multilinear polytope},
  author={Del Pia, Alberto and Khajavirad, Aida},
  journal={Mathematics of Operations Research},
  volume={46},
  number={3},
  pages={1008--1037},
  year={2021},
  publisher={INFORMS}
}

@article{bao2015global,
  title={Global optimization of nonconvex problems with multilinear intermediates},
  author={Bao, Xiaowei and Khajavirad, Aida and Sahinidis, Nikolaos V and Tawarmalani, Mohit},
  journal={Mathematical Programming Computation},
  volume={7},
  number={1},
  pages={1--37},
  year={2015},
  publisher={Springer}
}

@article{he2022tractable,
  title={Tractable relaxations of composite functions},
  author={He, Taotao and Tawarmalani, Mohit},
  journal={Mathematics of Operations Research},
  volume={47},
  number={2},
  pages={1110--1140},
  year={2022},
  publisher={INFORMS}
}

@article{he2021new,
  title={A new framework to relax composite functions in nonlinear programs},
  author={He, Taotao and Tawarmalani, Mohit},
  journal={Mathematical Programming},
  volume={190},
  number={1},
  pages={427--466},
  year={2021},
  publisher={Springer}
}

@article{locatelli2018convex,
  title={Convex envelopes of bivariate functions through the solution of KKT systems},
  author={Locatelli, Marco},
  journal={Journal of global optimization},
  volume={72},
  number={2},
  pages={277--303},
  year={2018},
  publisher={Springer}
}

@article{muller2020using,
  title={Using two-dimensional projections for stronger separation and propagation of bilinear terms},
  author={Muller, BENJAMIN and Serrano, Felipe and Gleixner, Ambros},
  journal={SIAM Journal on Optimization},
  volume={30},
  number={2},
  pages={1339--1365},
  year={2020},
  publisher={SIAM}
}

@article{mccormick1976computability,
  title={Computability of global solutions to factorable nonconvex programs: Part I—Convex underestimating problems},
  author={McCormick, Garth P},
  journal={Mathematical programming},
  volume={10},
  number={1},
  pages={147--175},
  year={1976},
  publisher={Springer}
}

@article{tawarmalani2002convex,
  title={Convex extensions and envelopes of lower semi-continuous functions},
  author={Tawarmalani, Mohit and Sahinidis, Nikolaos V},
  journal={Mathematical programming},
  volume={93},
  number={2},
  pages={247--263},
  year={2002},
  publisher={Springer}
}

@article{barber1996quickhull,
  title={The quickhull algorithm for convex hulls},
  author={Barber, C Bradford and Dobkin, David P and Huhdanpaa, Hannu},
  journal={ACM Transactions on Mathematical Software (TOMS)},
  volume={22},
  number={4},
  pages={469--483},
  year={1996},
  publisher={Acm New York, NY, USA}
}

@article{kamenev1996algorithm,
  title={An algorithm for approximating polyhedra},
  author={Kamenev, George K},
  journal={Computational mathematics and mathematical physics},
  volume={36},
  number={4},
  pages={533--544},
  year={1996},
  publisher={Pergamon Press, Inc. Elmsford, NY, USA}
}

@article{lotov2008modified,
  title={The modified method of refined bounds for polyhedral approximation of convex polytopes},
  author={Lotov, Alexander Vladimirovich and Pospelov, Alexis I},
  journal={Computational Mathematics and Mathematical Physics},
  volume={48},
  number={6},
  pages={933--941},
  year={2008},
  publisher={Springer}
}

@article{tawarmalani2004global,
  title={Global optimization of mixed-integer nonlinear programs: A theoretical and computational study},
  author={Tawarmalani, Mohit and Sahinidis, Nikolaos V},
  journal={Mathematical programming},
  volume={99},
  number={3},
  pages={563--591},
  year={2004},
  publisher={Springer}
}

@article{misener2014antigone,
  title={\text{ANTIGONE}: algorithms for continuous/integer global optimization of nonlinear equations},
  author={Misener, Ruth and Floudas, Christodoulos A},
  journal={Journal of Global Optimization},
  volume={59},
  number={2},
  pages={503--526},
  year={2014},
  publisher={Springer}
}

@article{mahajan2021minotaur,
  title={Minotaur: A mixed-integer nonlinear optimization toolkit},
  author={Mahajan, Ashutosh and Leyffer, Sven and Linderoth, Jeff and Luedtke, James and Munson, Todd},
  journal={Mathematical Programming Computation},
  volume={13},
  number={2},
  pages={301--338},
  year={2021},
  publisher={Springer}
}

@article{bestuzheva2023global,
  title={Global optimization of mixed-integer nonlinear programs with {SCIP} 8},
  author={Bestuzheva, Ksenia and Chmiela, Antonia and M{\"u}ller, Benjamin and Serrano, Felipe and Vigerske, Stefan and Wegscheider, Fabian},
  journal={Journal of Global Optimization},
  pages={1--24},
  year={2023},
  publisher={Springer}
}

@article{cohen20023d,
  title={3D line voxelization and connectivity control},
  author={Cohen-Or, Daniel and Kaufman, Arie},
  journal={IEEE Computer Graphics and Applications},
  volume={17},
  number={6},
  pages={80--87},
  year={2002},
  publisher={IEEE}
}

@article{sramek2002alias,
  title={Alias-free voxelization of geometric objects},
  author={Sramek, Milos and Kaufman, Arie E},
  journal={IEEE transactions on visualization and computer graphics},
  volume={5},
  number={3},
  pages={251--267},
  year={2002},
  publisher={IEEE}
}

@article{stolte1997robust,
  title={Robust voxelization of surfaces},
  author={Stolte, Nilo},
  journal={Technial Report of Center for Visual Computing and Computer Science Department, State University of New York at Stony Brook},
  year={1997}
}

@article{stolte2001novel,
  title={Novel techniques for robust voxelization and visualization of implicit surfaces},
  author={Stolte, Nilo and Kaufman, Arie},
  journal={Graphical Models},
  volume={63},
  number={6},
  pages={387--412},
  year={2001},
  publisher={Elsevier}
}

@article{gorte2016rasterization,
  title={Rasterization and voxelization of two-and three-dimensional space partitionings},
  author={Gorte, BGH and Zlatanova, Sisi},
  year={2016}
}

@article{tawarmalani2013explicit,
  title={Explicit convex and concave envelopes through polyhedral subdivisions},
  author={Tawarmalani, Mohit and Richard, Jean-Philippe P and Xiong, Chuanhui},
  journal={Mathematical Programming},
  volume={138},
  number={1},
  pages={531--577},
  year={2013},
  publisher={Springer}
}

@article{he2024mip,
  title={\text{MIP} relaxations in factorable programming},
  author={He, Taotao and Tawarmalani, Mohit},
  journal={SIAM Journal on Optimization},
  volume={34},
  number={3},
  pages={2856--2882},
  year={2024},
  publisher={SIAM}
}

@book{grinstead2012introduction,
  title={Introduction to probability},
  author={Grinstead, Charles Miller and Snell, James Laurie},
  year={2012},
  publisher={American Mathematical Soc.},
  address={Providence, RI}
}

@book{moore2009introduction,
  title={Introduction to interval analysis},
  author={Moore, Ramon E and Kearfott, R Baker and Cloud, Michael J},
  year={2009},
  publisher={SIAM},
  address={Philadelphia, PA}
}

@article{wachter2006implementation,
  title={On the implementation of an interior-point filter line-search algorithm for large-scale nonlinear programming},
  author={W{\"a}chter, Andreas and Biegler, Lorenz T},
  journal={Mathematical programming},
  volume={106},
  number={1},
  pages={25--57},
  year={2006},
  publisher={Springer}
}

@misc{gurobi,
  author = {{Gurobi Optimization, LLC}},
  title = {{Gurobi Optimizer Reference Manual}},
  year = 2026,
  url = "https://www.gurobi.com"
}

@article{Julia-2017,
    title={\text{Julia}: A fresh approach to numerical computing},
    author={Bezanson, Jeff and Edelman, Alan and Karpinski, Stefan and Shah, Viral B},
    journal={SIAM {R}eview},
    volume={59},
    number={1},
    pages={65--98},
    year={2017},
    publisher={SIAM},
    doi={10.1137/141000671},
    url={https://epubs.siam.org/doi/10.1137/141000671}
}

@article{bussieck2003minlplib,
  title={\text{MINLPLib}—a collection of test models for mixed-integer nonlinear programming},
  author={Bussieck, Michael R and Drud, Arne Stolbjerg and Meeraus, Alexander},
  journal={INFORMS Journal on Computing},
  volume={15},
  number={1},
  pages={114--119},
  year={2003},
  publisher={INFORMS}
}

@article{sherali1990hierarchy,
  title={A hierarchy of relaxations between the continuous and convex hull representations for zero-one programming problems},
  author={Sherali, Hanif D and Adams, Warren P},
  journal={SIAM Journal on Discrete Mathematics},
  volume={3},
  number={3},
  pages={411--430},
  year={1990},
  publisher={SIAM}
}

@inproceedings{lasserre2001explicit,
  title={An explicit exact SDP relaxation for nonlinear 0-1 programs},
  author={Lasserre, Jean B},
  booktitle={International Conference on Integer Programming and Combinatorial Optimization},
  pages={293--303},
  year={2001},
  organization={Springer}
}

@article{bonami2008algorithmic,
  title={An algorithmic framework for convex mixed integer nonlinear programs},
  author={Bonami, Pierre and Biegler, Lorenz T and Conn, Andrew R and Cornu{\'e}jols, G{\'e}rard and Grossmann, Ignacio E and Laird, Carl D and Lee, Jon and Lodi, Andrea and Margot, Fran{\c{c}}ois and Sawaya, Nicolas and others},
  journal={Discrete optimization},
  volume={5},
  number={2},
  pages={186--204},
  year={2008},
  publisher={Elsevier}
}

@article{kronqvist2019review,
  title={A review and comparison of solvers for convex MINLP},
  author={Kronqvist, Jan and Bernal, David E and Lundell, Andreas and Grossmann, Ignacio E},
  journal={Optimization and Engineering},
  volume={20},
  number={2},
  pages={397--455},
  year={2019},
  publisher={Springer}
}

@article{coey2020outer,
  title={Outer approximation with conic certificates for mixed-integer convex problems},
  author={Coey, Chris and Lubin, Miles and Vielma, Juan Pablo},
  journal={Mathematical Programming Computation},
  volume={12},
  number={2},
  pages={249--293},
  year={2020},
  publisher={Springer}
}

@article{gunluk2010perspective,
  title={Perspective reformulations of mixed integer nonlinear programs with indicator variables},
  author={G{\"u}nl{\"u}k, Oktay and Linderoth, Jeff},
  journal={Mathematical programming},
  volume={124},
  number={1},
  pages={183--205},
  year={2010},
  publisher={Springer}
}

@article{burer2009copositive,
  title={On the copositive representation of binary and continuous nonconvex quadratic programs},
  author={Burer, Samuel},
  journal={Mathematical Programming},
  volume={120},
  number={2},
  pages={479--495},
  year={2009},
  publisher={Springer}
}

@article{hojny2024detecting,
  title={Detecting and handling reflection symmetries in mixed-integer (nonlinear) programming},
  author={Hojny, Christopher},
  journal={arXiv preprint arXiv:2405.08379},
  year={2024}
}

@article{land1960automatic,
  title={An Automatic Method of Solving Discrete Programming Problems},
  author={Land, AH and Doig, AG},
  journal={Econometrica},
  volume={28},
  number={3},
  pages={497--520},
  year={1960}
}

@article{ryoo1996branch,
  title={A branch-and-reduce approach to global optimization},
  author={Ryoo, Hong S and Sahinidis, Nikolaos V},
  journal={Journal of global optimization},
  volume={8},
  number={2},
  pages={107--138},
  year={1996},
  publisher={Springer}
}

@article{tawarmalani2005polyhedral,
  title={A polyhedral branch-and-cut approach to global optimization},
  author={Tawarmalani, Mohit and Sahinidis, Nikolaos V},
  journal={Mathematical programming},
  volume={103},
  number={2},
  pages={225--249},
  year={2005},
  publisher={Springer}
}

@article{belotti2009branching,
  title={Branching and bounds tighteningtechniques for non-convex MINLP},
  author={Belotti, Pietro and Lee, Jon and Liberti, Leo and Margot, Fran{\c{c}}ois and W{\"a}chter, Andreas},
  journal={Optimization Methods \& Software},
  volume={24},
  number={4-5},
  pages={597--634},
  year={2009},
  publisher={Taylor \& Francis}
}

@article{rikun1997convex,
  title={A convex envelope formula for multilinear functions},
  author={Rikun, Anatoliy D},
  journal={Journal of Global Optimization},
  volume={10},
  number={4},
  pages={425--437},
  year={1997},
  publisher={Springer}
}

@book{rockafellar1998variational,
  title={Variational analysis},
  author={Rockafellar, R Tyrrell and Wets, Roger JB},
  year={1998},
  publisher={Springer},
  address={Berlin}
}

@book{tawarmalani2013convexification,
  title={Convexification and global optimization in continuous and mixed-integer nonlinear programming: theory, algorithms, software, and applications},
  author={Tawarmalani, Mohit and Sahinidis, Nikolaos V},
  volume={65},
  year={2013},
  publisher={Springer Science \& Business Media},
    address={Berlin}
}

@article{ben2021lectures,
  title={Lectures on modern convex optimization 2020},
  author={Ben-Tal, Aharon and Nemirovski, Arkadi},
  journal={SIAM, Philadelphia},
  year={2021}
}

@book{conforti2014integer,
  title={Integer Programming},
  author={Conforti, Michele and Cornu{\'e}jols, G{\'e}rard and Zambelli, Giacomo},
  series={Graduate Texts in Mathematics},
  volume={271},
  year={2014},
  publisher={Springer International Publishing},
  address={Cham, Switzerland}
}

@article{crama1993concave,
  title={Concave extensions for nonlinear 0--1 maximization problems},
  author={Crama, Yves},
  journal={Mathematical Programming},
  volume={61},
  number={1},
  pages={53--60},
  year={1993},
  publisher={Springer}
}

@article{meyer2005convex,
  title={Convex envelopes for edge-concave functions},
  author={Meyer, Clifford A and Floudas, Christodoulos A},
  journal={Mathematical programming},
  volume={103},
  number={2},
  pages={207--224},
  year={2005},
  publisher={Springer}
}

@article{locatelli2016polyhedral,
  title={Polyhedral subdivisions and functional forms for the convex envelopes of bilinear, fractional and other bivariate functions over general polytopes},
  author={Locatelli, Marco},
  journal={Journal of global optimization},
  volume={66},
  number={4},
  pages={629--668},
  year={2016},
  publisher={Springer}
}

\end{document}